\newtheorem{theorem}{Theorem}[section]
\newtheorem{remark}{Remark}[section]
\newtheorem{lemma}[theorem]{Lemma}
\newtheorem{pro}[theorem]{Proposition}
\renewcommand{\div}{{\rm div \thinspace }}
\newcommand{\bt}{\begin{theorem}}
	\newcommand{\bl}{\begin{lemma}}
		\newcommand{\el}{\end{lemma}}
	\newcommand{\et}{\end{theorem}}
\newcommand{\bn}{\begin{eqnarray}}
	\newcommand{\en}{\end{eqnarray}}
\newcommand{\bnn}{\begin{eqnarray*}}
	\newcommand{\enn}{\end{eqnarray*}}
\newcommand{\ba}{\begin{aligned}}
	\newcommand{\ea}{\end{aligned}}
\newcommand{\be}{\begin{equation}}
	\newcommand{\ee}{\end{equation}}
\newcommand{\Bv}{{\boldsymbol{v}}}
\newcommand{\Bu}{{\boldsymbol{u}}}
\newcommand{\Be}{{\boldsymbol{e}}}
\newcommand{\Bf}{{\boldsymbol{f}}}
\begin{document}
	
	\title
	[Self-similar solutions with  external forces]
	{On the existence of self-similar solutions to the steady Navier-Stokes equations  in high dimensions}
	
	\author{Jeaheang Bang}
	\address{ Institute for Theoretical Sciences, Westlake University, Hangzhou, China}
	\email{jhbang@westlake.edu.cn}

	\author{Changfeng Gui}
	\address{Department of Mathematics, Faculty of Science and Technology, University of Macau, Taipa, Macao}
	\email{Changfenggui@um.edu.mo}
	
	\author{Hao Liu}
	\address{Department of Mathematics, Faculty of Science and Technology, University of Macau, Taipa, Macao}
	\email{haoliu@um.edu.mo}
	
	\author{Yun Wang}
	\address{School of Mathematical Sciences, Center for dynamical systems and differential equations, Soochow University, Suzhou, China}
	\email{ywang3@suda.edu.cn}

	\author{Chunjing Xie}
	\address{School of Mathematical Sciences, Institute of Natural Sciences,
		Ministry of Education Key Laboratory of Scientific and Engineering Computing,
		and CMA-Shanghai, Shanghai Jiao Tong University, 800 Dongchuan Road, Shanghai, China}
	\email{cjxie@sjtu.edu.cn}

	\begin{abstract}
		We prove that the steady incompressible Navier-Stokes equations with any given $(-3)$-homogeneous, locally Lipschitz external force on $\mathbb{R}^n\setminus\{0\}$, $4\leq n\leq 16$, have at least one $(-1)$-homogeneous solution which is scale-invariant and regular away from the origin. The global uniqueness of the self-similar solution  is obtained as long as the external force is small.
		The key observation is to exploit a nice relation between the radial component of the velocity and the total head pressure under the self-similarity assumption. It plays an essential role in establishing the energy estimates.  If the external force has only the nonnegative radial component, we can prove the existence of $(-1)$-homogeneous solutions for all $n\geq 4$.
		The regularity of the solution follows from integral estimates of the positive part of the total head pressure, which is due to the maximum principle  and a ``dimension-reduction" effect arising from the self-similarity. 
		
	\end{abstract}

	\keywords{Navier-Stokes equations,  Self-similar solution,  Large external force, Existence, Uniqueness}
	\subjclass[2020]{35Q30, 35C06, 35A01, 35A02, 76D05}

	\maketitle
	
	\section{Introduction and main results}
	We consider the steady Navier-Stokes equations  in  dimension $n$ with an  external force $\Bf=(f_1, f_2,\cdots, f_n)$:
	\begin{align} \label{NS}
		-\Delta \Bu + (\Bu\cdot \nabla )\Bu + \nabla p=\boldsymbol {f}, \quad \div \Bu=0,
	\end{align}
	where  the unknowns are the velocity field $\Bu=(u_1, u_2, \cdots, u_n)$ and the  pressure $p$. The existence and uniqueness of solutions to \eqref{NS} is a fundamental problem in partial differential equations (see \cite{Galdi11}).
	
	\subsection{Existence and uniqueness of self-similar solutions}

	A very important property of the  system \eqref{NS} is that it is invariant under the scaling	\begin{equation}\label{eq:sca}
		\begin{aligned}
			&\Bu (x) \to \Bu_{\lambda} (x) = \lambda \Bu (\lambda x),\quad 
			p(x) \to p_{\lambda}(x)= \lambda^2p(\lambda x),\quad 
			\boldsymbol {f}(x)\to \Bf_{\lambda}(x)= \lambda^3\boldsymbol {f}(\lambda x),
		\end{aligned}
	\end{equation}
	for any $\lambda>0$. That is, if $(\Bu,p,\Bf)$ satisfies \eqref{NS}, then $(\Bu_\lambda,p_\lambda,\Bf_\lambda)$ defined in \eqref{eq:sca} also satisfies \eqref{NS}.
	In particular, the solution $(\Bu,p)$  is called    scale-invariant or self-similar if
	$(\Bu_{\lambda},p_{\lambda})= (\Bu,p)$   for any $\lambda > 0$. In this case, 
	the external force $\Bf$ has also to be scale-invariant,  i.e., 
	$\Bf_{\lambda}= \boldsymbol {f}$ for any $\lambda > 0$. 
	This is the same as saying that  $\Bu$ is (-1)-homogeneous and $\Bf$ is (-3)-homogeneous.

	The existence of solutions with given external force $\Bf$ in a scale-invariant space has been studied extensively, such as $L^{n}$, $L^{n,\infty}$, and Besov spaces $\Dot{B}_{p,q}^{-1+\frac{n}{p}}$ ($1\leq p<n, 1\leq q\leq \infty$), typically under \emph{smallness assumptions} (see \cite{Kozono95, Phan13, Kaneko19, Tsurumi19}). In particular, the     existence of self-similar solutions to \eqref{NS} follows from the results in \cite{Kozono95, Phan13}, $n\geq 3$, if the force $\Bf$ is small enough. 
	The key idea in \cite{Kozono95, Phan13} is to use the  contraction mapping argument in the corresponding space. 

	
	Without size restrictions in a scale-invariant space, the nonlinear term may become dominant. This makes the contraction mapping argument difficult to apply to construct large solutions directly.
	In dimension  $n=4$,  Shi proved the existence of self-similar solutions with large external forces in \cite{Shi18}. The key observation there is that under self-similarity assumptions, for the solutions of four-dimensional steady Navier-Stokes equations, one has the identity (see also \eqref{eq:keydec-2} below)
	$$\int_{S^{3}} |\nabla \Bu|^2 d\sigma
	= 	\int_{S^{3}} \Bf\cdot \Bu d\sigma,$$ which gives enough energy estimates for $\Bu$. 
	Here and in the following, we use    $S^{n-1}$ to denote the standard unit sphere in $\mathbb{R}^n$.
	For dimension $n\geq 5$, it is remarked in \cite[Section 5]{Shi18} that ``The author does not know whether there are large self-similar solutions to the stationary Navier–Stokes equations with arbitrary scaling external force for dimensions $ n\geq 5$. The natural Dirichlet energy $\int |\nabla \Bu|^2$ is scaled invariant only
	for dimension four, and thus we cannot apply the same method to establish the a priori estimates
	for five or higher dimensions".

	The main goal here is to show the existence of self-similar solutions to \eqref{NS} with arbitrary scale-invariant external force $\Bf$ up to dimension 16. Before stating our main results, we recall the following standard notations. 
	For  a  locally (H\"{o}lder or Lipschitz) continuous
	homogeneous vector field $\Bv$ on $\mathbb{R}^n\setminus \{0\}$ and any $\alpha\in (0,1)$, we use $\|\Bv\|_{C(S^{n-1})}$,    $\|\Bv\|_{C^\alpha(S^{n-1})}$, and $\|\Bv\|_{\textrm{Lip}(S^{n-1})}=\|\Bv\|_{C^{0,1}(S^{n-1})}$ to denote the standard continuous norm, H\"{o}lder norm, and Lipschitz norm 
	of $\Bv$ on $S^{n-1}$,
	respectively. In particular, it follows from Rademacher theorem (see \cite[Theorem 3.2]{EvansG}) that one has
	\begin{equation*}
		\|\Bv\|_{\textnormal{Lip}(S^{n-1})}= \left\| \Bv\right\|_{L^\infty(S^{n-1})} + \left\|\nabla \Bv\right\|_{L^\infty(S^{n-1})}.
	\end{equation*}

	
	Our main results of this paper are as follows.
	\begin{theorem}\label{thm:main}
		For  $n\in\mathbb{N}$,	$4 \leq n \leq 16$, let   $\Bf$ be a $(-3)$-homogeneous force  such that  $\Bf$ is locally Lipschitz on $\mathbb{R}^n \setminus\{0\}$.
		Then we have the following results.
		\begin{itemize}
			\item[(i)]  There exists at least one self-similar solution $\Bu$  to the steady Navier-Stokes equations \eqref{NS} such that  $\Bu  \in C^{2, \alpha}_{loc} (\mathbb{R}^n\setminus\{0\})$ for any $\alpha\in (0,1)$ and
			\begin{equation}\label{eq:C2est}
				\|\Bu\|_{C(S^{n-1})} + 	 	\|\nabla \Bu\|_{C(S^{n-1})} + \|\nabla^2 \Bu\|_{C^{\alpha}(S^{n-1})} \leq C,
			\end{equation}
			where   $C > 0$ depends only on $\alpha$, $n$, and  $\|\Bf\|_{\textnormal{Lip}(S^{n-1})}$.
			\item [(ii)]  If in addition,  the external force $\Bf$ is smooth on $\mathbb{R}^n \setminus \{0\}$, then 
			the self-similar solution $\Bu$  we obtained is also smooth on $\mathbb{R}^n \setminus \{0\}$.
			\item [(iii)]  There exists a universal constant $\epsilon>0$ depending only on $n$ such that if  
			\[
			\|\Bf\|_ {\textnormal{Lip}(S^{n-1})}\leq \epsilon,
			\]
			then the self-similar solution is unique in $C^2(\mathbb{R}^n \setminus \{0\})$.
		\end{itemize}
	\end{theorem}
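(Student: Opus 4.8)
The plan is to construct the self-similar solution via a Leray--Schauder fixed-point argument on the sphere $S^{n-1}$, reducing the $(-1)$-homogeneous ansatz $\Bu(x)=|x|^{-1}\bBV(x/|x|)$ to an elliptic system on the compact manifold $S^{n-1}$, and then to obtain the a priori bounds that make the argument work. First I would write $\Bu = |x|^{-1}\bBV$, $p = |x|^{-2}P$, and plug into \eqref{NS} to get a quasilinear elliptic system for $\bBV$ and $P$ on $S^{n-1}$; the divergence-free condition becomes a constraint on $S^{n-1}$ involving the radial component $V_r := \bBV\cdot\Bn$. The crucial algebraic observation advertised in the abstract is that, under self-similarity, $V_r$ is tied to the \emph{total head pressure} $\Phi := P + \tfrac12|\bBV|^2$; I would derive this relation carefully, since it is what replaces the failure of Dirichlet-energy scale-invariance for $n\ge5$. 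Concretely, I expect an identity of the schematic form $(n-4)V_r = $ (something controlled by $\Phi$ and lower-order terms), or an evolution/transport relation for $\Phi$ along the flow of the spherical part of $\bBV$, from which $V_r$ inherits integrability.

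Next I would set up the energy estimates. Testing the momentum equation against $\bBV$ (i.e.\ against $\Bu$ on the unit sphere as in Shi's $n=4$ identity) produces $\int_{S^{n-1}}|\nabla\bBV|^2 + (\text{curvature/dimension terms}) = \int_{S^{n-1}}\Bf\cdot\bBV + (\text{terms with }V_r\text{ and }\Phi)$. Because $\int|\nabla\Bu|^2$ is no longer scale-invariant for $n\ne4$, the extra terms carry a factor like $(n-4)$ and must be absorbed; this is exactly where the $V_r$--$\Phi$ relation enters, converting the dangerous cubic term into something that can be dominated by the Dirichlet energy via Sobolev embedding $H^1(S^{n-1})\hookrightarrow L^{q}$ with $q=\tfrac{2(n-1)}{n-3}$. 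The restriction $n\le16$ should emerge here as the range of dimensions for which the relevant Sobolev exponent and the numerical constants in the absorption inequality close — I would track the constants rather than hand-wave, because the sharp $16$ is presumably the threshold where the coefficient comparison fails. With an $H^1(S^{n-1})$ bound in hand, I would bootstrap: $\Phi$ satisfies a second-order elliptic inequality (maximum-principle structure) whose positive part $\Phi_+$ admits integral estimates thanks to the ``dimension-reduction'' effect from homogeneity — reducing an $n$-dimensional estimate to an $(n-1)$-dimensional one on the sphere — giving $L^\infty$ control of $\Phi_+$, hence of $P$ from above, hence (with the $V_r$--$\Phi$ link) pointwise control leading to the full $C^{2,\alpha}(S^{n-1})$ bound \eqref{eq:C2est} by standard elliptic regularity; part (ii) then follows by further bootstrapping when $\Bf$ is smooth.

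For the existence conclusion (i), I would phrase the problem as a fixed point of a compact map $\mathcal{T}\colon \bBV \mapsto \bBV$ obtained by solving the linearized (Stokes-type) system on $S^{n-1}$ with the nonlinearity frozen, verify compactness via the gain of regularity, and invoke Leray--Schauder using the uniform a priori bound just established for all solutions of the homotopy family $\Bf \rightsquigarrow t\Bf$. For the uniqueness statement (iii): assume two self-similar solutions $\bBV_1,\bBV_2$ with the force small in $\mathrm{Lip}(S^{n-1})$; the a priori bound \eqref{eq:C2est} is then small (proportional to $\epsilon$), so the difference $\bBW=\bBV_1-\bBV_2$ solves a linear elliptic system on $S^{n-1}$ whose zeroth-order coefficients are $O(\epsilon)$ in $L^\infty$; testing against $\bBW$ and using the spherical Poincar\'e inequality gives $\|\nabla\bBW\|_{L^2(S^{n-1})}^2 \le C\epsilon\|\nabla\bBW\|_{L^2(S^{n-1})}^2$, forcing $\bBW\equiv0$ once $C\epsilon<1$. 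I expect the main obstacle to be the first part: pinning down the precise $V_r$--$\Phi$ identity and making the absorption inequality close with honest constants across the whole range $4\le n\le16$ — everything downstream (regularity, existence, uniqueness) is comparatively standard elliptic machinery once that a priori estimate is secured.
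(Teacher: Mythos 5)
Your overall architecture matches the paper's: reduction to the sphere, the elliptic relation $-\Delta_{S^{n-1}}u^r+\Bu^t\cdot\nabla^{S^{n-1}}u^r=2H+f^r$ linking the radial velocity to the total head pressure, the energy identity with the $(n-4)Hu^r$ term, integral estimates for $H_+$ from the maximum-principle structure of the $H$-equation, and a Leray--Schauder homotopy in $\lambda\Bf$. The uniqueness sketch differs from the paper (which shows all solutions are small when $\Bf$ is small and then cites the known local uniqueness of Kaneko--Kozono--Shimizu) but is plausibly workable. However, there is one genuine gap in the core a priori estimate.

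The gap is in your claim that the absorption closes via the Sobolev embedding $H^1(S^{n-1})\hookrightarrow L^{2(n-1)/(n-3)}$ and that ``the sharp $16$'' is the threshold where this embedding-based constant comparison fails. It is not: the quantity you must control is $\|\Bu\|_{L^q(S^{n-1})}$ with $q=\frac{(n-2)(n-1)}{4n-10}$ (dual to the exponent $\theta'$ appearing in the $H_+$ estimate), and $q\le\frac{2(n-1)}{n-3}$ holds only for $n\le 10$; the Sobolev route therefore closes the energy estimate only up to dimension $10$. To reach $n\le 16$ the paper needs an additional, independent ingredient that your plan does not contain: weighted estimates $\int_{B_{R_0}(y)}|p(x)|\,|x-y|^{-s-2}dx$ for the pressure (obtained by testing $-\Delta p=\partial_iu_j\partial_ju_i-\div\Bf$ against $\zeta|x-y|^{-s}$, in the spirit of Frehse--R\r{u}\v{z}i\v{c}ka and Li--Yang), which yield a \emph{local} higher integrability $\|\Bu\|_{L^\beta(S^{n-1})}\lesssim \|H_+\|_{L^\theta}^{1/2}+\|\nabla\Bu\|_{L^2}+\|\Bf\|_{\mathrm{Lip}}^{1/2}$ for all $\beta<4$ that does not pass through the Sobolev exponent. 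The condition $q<4$, i.e. $n^2-19n+42<0$, is precisely what produces the bound $n\le 16$. Without this step your argument proves the theorem only for $4\le n\le 10$. A secondary inaccuracy: you assert $L^\infty$ control of $H_+$; what is actually obtained (and all that is needed) is $H_+\in L^\theta(S^{n-1})$ with $\theta=\frac{(n-2)(n-1)}{2(n-3)}>\frac n2$, which triggers the standard regularity criterion for weak solutions with $H_+\in L^\gamma_{loc}$, $\gamma>n/2$; pointwise control of $H_+$ is not established and is not how the $C^{2,\alpha}$ bound is reached.
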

	
	We have a few remarks in order.
	\begin{remark}
		As mentioned above, the existence of self-similar solutions to \eqref{NS}  with  (-3)-homogeneous forces in the four-dimensional case has been obtained in \cite{Shi18}. We mainly focus on the proof of Theorem \ref{thm:main} for the case  $n\geq 5$.
	\end{remark}
	\begin{remark}
		We can  prove the existence of $(-1)$-homogeneous solutions for all $n\geq 4$
		if the external force  has only the radial component whose value is also positive, i.e., $\Bf=r^{-3}f^r\Be_r, f^r\geq 0$, where $r=|x|$ and $ \Be_r=\frac{x}{|x|}$ is the unit vector in the radial direction. See Theorem \ref{thm:main2} below for the precise statement.
	\end{remark}
	
	\begin{remark}
		We remark that the self-similar solution 
		solves the Navier-Stokes system \eqref{NS} across the origin in the sense of distributions because both sides of \eqref{NS} are locally integrable around the origin if $n\geq 4$. In contrast, for $n=3$, the self-similar solutions solve the Navier-Stokes system \eqref{NS} across the origin with the right-hand side supplemented by multiples of the Dirac delta force.
	\end{remark}
	
	\begin{remark}
		The existence of self-similar solutions with large external forces may not always be  guaranteed. The global uniqueness is not always guaranteed, either, even for solutions with small external forces. For example, let us consider the self-similar solutions to the following one-dimensional toy model:
		\begin{align}
			\label{Burger}
			-u''+uu'=f , \quad x\in \mathbb{R}\setminus \{0\}
		\end{align}
		where $u$ and $f$ are scalar functions.  It has the same scaling property as the steady Navier-Stokes equations. As the domain $\mathbb{R}\setminus \{0\}$ is not connected, one can consider the positive part $\{x>0\}$ only.
		For any $(-3)$-homogeneous $f(x)=\frac{c}{x^3}$, $x>0$,   and $c$ is a constant, one can find all  $(-1)$-homogeneous solutions must be of the form $u(x)=\frac{C}{x}$, where
		\begin{align*}
			C=-1\pm \sqrt{1-c}.
		\end{align*}
		This shows that there does not exist a solution $u$ if $c>1$; on the other hand, there are exactly two solutions if $c<1$, and there is exactly one solution if $c=1$. 
		
		We should mention that similar existence, non-existence, and non-uniqueness results, which depend on the size of an external force for the three-dimensional steady Navier-Stokes equations in the axisymmetric setting, were proved in \cite{Shi18}.
		
			
	\end{remark}
	
	\begin{remark}
		The main point of uniqueness in Theorem \ref{thm:main} is the global uniqueness without size restriction on solutions. 
		Previously, local uniqueness has been obtained in \cite[Theorem 1.2]{Kaneko19} when the external $\Bf$ is small, i.e., the solution is unique in the class with smallness. 
		
		The key point in Part (iii) of Theorem \ref{thm:main} is that 
		our a priori estimates imply that all possible self-similar solutions must admit smallness if the external force is small, and then global uniqueness follows. The uniqueness here is in the same spirit as that of \cite{jia1409}, where the uniqueness is proved for small initial data in $L^{3,\infty}(\mathbb{R}^3)$.
	\end{remark}
	
	\begin{remark}
		In \cite{Jia14},  the global existence of self-similar solutions with a $(-1)$-homogeneous initial data for the unsteady three-dimensional Navier-Stokes equations was established. Our problem is  different from the problem of \cite{Jia14}.
		At a technical level, for our problem, there are no available local energy estimates, and the solution of the  linearized Stokes equations cannot serve as the leading term. 
	\end{remark}
	Here, we would like to recall some related results on the existence of steady solutions for the Navier-Stokes equations. The study of the existence of weak solutions of \eqref{NS} has been extensive, since the work of Leray \cite{leray1933etude}, where the existence of solutions with finite energy was established. 
	Especially, the existence of weak solutions belonging to $H^1$ 
	follows from Leray's arguments in any dimension (see \cite[Chapter 2]{Tsai18}).
	Considering the regularity of the solution when the external force $\Bf$ is smooth, one can show that weak solutions are regular if the dimension $n\leq 4$ (see \cite{Gerhardt79}).
	In higher dimensions, it is still an open problem whether any weak solution is smooth.
	
	In the 1990s,  Frehse and R{\r u}\v{z}i\v{c}ka 
	and also  Struwe
	initiated the study of the existence of smooth solutions in higher dimensions (\hspace{1sp}\cite{FrehseRuzicka94,FrehseRuzicka96,Frehse94,Struwe95}). The analysis in this paper was inspired by the method developed in \cite{Frehse94,Struwe95}. 
	Frehse and R{\r u}\v{z}i\v{c}ka \cite{Frehse94} showed that in a bounded domain in $\mathbb{R}^5$ with homogeneous Dirichlet boundary condition,  \eqref{NS} has weak solutions which are ``almost regular". Struwe \cite{Struwe95}  showed the existence of regular solutions on $\mathbb{R}^5$ and the torus $\mathbb{T}^5$ by establishing  $C^1$ a priori bounds of solutions. Frehse and R{\r u}\v{z}i\v{c}ka \cite{FrehseRuzicka94, FrehseRuzicka96} then established the existence of regular solutions of the Dirichlet problem in dimensions $n = 5,6$, and also established the existence of regular solutions on $\mathbb{T}^n$ for $5 \leq n \leq 15$. Recently, Li and Yang \cite{LiYang22} extended the existence of regular solutions on $\mathbb{R}^n$ for $n=5$ to $5 \leq n \leq 15$.
	It should be noted that in the above works, when the domain is the whole space $\mathbb{R}^n$, the external force $\Bf$  is assumed to be  regular and compactly supported or  decay fast enough so that standard a priori energy estimates hold. This makes the results in \cite{Struwe95, LiYang22} cannot be applied to the case with $(-3)$-homogeneous external forces.  
	
	\subsection{Main ideas and outlines for the proof}\label{sec:mainidea}


	In this subsection, we give the key ideas for the proof of main results in this paper.
	We prove  the existence of self-similar solutions by applying the Leray-Schauder degree theory. The essential part is to establish a priori estimates. 
	We begin with the standard energy estimates for the Navier-Stokes equations. It follows from the  self-similarity that the  energy estimates can be written as
	\begin{equation*}
		\int_{S^{n-1}} 
		\left(
		|\nabla \Bu|^2 +(n-4)|\Bu|^2 + (n-4)Hu^r
		\right) 
		d\sigma = 	\int_{S^{n-1}} \Bf\cdot \Bu \, d\sigma.
	\end{equation*}
	Here $H=\frac{1}{2}|\Bu|^2+p$ is the total head pressure and $u^r=\Bu\cdot x$ is the radial component of $r\Bu$, $r=|x|$.  
	When $n=4$,  it gives the desired energy estimates, and the existence of self-similar solutions with general external forces was established in \cite{Shi18}. 
	When $n\geq 5$, one has to estimate the non-linear term $\int_{S^{n-1}} H u^r d\sigma$. This term is of order $|u|^3$, and its sign is not clear.
	
	
	
	To estimate $\int_{S^{n-1}} H u^r d\sigma$, our idea is to split $H u^r$ into $H u^r_+$ and $H u^r_-$. Here and below, we denote the positive part and the negative part of a function $g(x)$ by 
	\begin{equation*}
		g_+(x) :=\max\{0, g(x)\},~g_-(x):=-\min\{0,g(x)\}
	\end{equation*}
	respectively. Clearly, one has $g(x)=g_+(x) - g_-(x)$ and $|g(x)| = g_+(x)+ g_-(x)$.
	
	A key observation is that there is a crucial relation between $H$ and $u^r$, which is
	\begin{equation} \label{relation}
		-\Delta_{S^{n-1}}u^{r} + 
		\Bu^{t} \cdot\nabla^{S^{n-1}} u^{r}  
		= 2H+{f}^{r} \quad \text{on }S^{n-1}.
	\end{equation}
	Here
	$\Bu^{t}=r\Bu-u^r \Be_r$ is the tangential component of $r\Bu$, and  
	\begin{equation}\label{eq:sphegradi}
		\nabla^{S^{n-1}} = r\left(\nabla -\Be_r\frac{\partial}{\partial r}\right) \,\,\text{with}\,\, \Be_r=\frac{x}{|x|}, \quad \Delta_{S^{n-1}} = r^2\left(\Delta -\frac{\partial^2}{\partial r^2}-\frac{n-1}{r}\frac{\partial}{\partial r}\right),
	\end{equation}   
	are the spherical gradient and the spherical Laplacian, respectively.
	Using this relation \eqref{relation}, we have the following energy estimate 
	\begin{equation*}
		\int_{S^{n-1}} |\nabla \Bu|^2 +(n-4)|\Bu|^2 d\sigma
		\leq 	\int_{S^{n-1}} \Bf\cdot \Bu + (n-4)H_{+} u^r_{-}\,  d\sigma + \frac{n-4}{2} \int_{S^{n-1}} f^r u^r_{+}\, d\sigma.
	\end{equation*}
	Hence, the key point is then to control $H_+$ and $u^r$.  One notes that $H$  solves an elliptic equation 
	\begin{align} \label{eqforH_0}
		-\Delta \, H
		+ \Bu \cdot \nabla H
		= - \frac{1}{2} \sum_{i,j=1}^{n}|\partial_i u_j - \partial_j u_i |^2
		+\Bf\cdot \Bu - \div \Bf  \text{ in }\mathbb{R}^n \setminus \{0\},
	\end{align}
	so that $H_+$ satisfies some a priori estimates inspired by the maximum principle.  
	This indeed leads to integrability estimates of $H_+$ beyond the classical regularity criteria exponent $``n/2"$ in terms of $\Bu$ and  $\Bf$ (see Lemma \ref{lem:H+control} below), where the ``dimension-reduction” effect of the self-similarity also plays an important role. Directly integrating \eqref{relation} on $S^{n-1}$, we can estimate $\|u^r\|_{L^2(S^{n-1})}$ in terms of  $H_+$ and $\Bf$ and  consequently get the estimates of $u^r$ in terms of  $\Bu$ and $\Bf$ (see Lemma \ref{lem:urestim} below). 
	{ With these ingredients, one can close the energy estimates with the help of the Sobolev inequality on $\Bu$, which yields a dimension restriction $n\leq 10$, see Proposition \ref{lemmaenergyestimatenleq10} and the wordings below it. To go beyond the scope of the Sobolev inequality, we instead use the equations again and obtain better integral estimates for $\Bu$, which yields a more general condition $n\leq 16$. See Lemmas \ref{lem:weighedestiforpressure} and \ref{highintegralofu}.
	}

	Along with the proof of the energy estimates for $\Bu$, higher integrability of $H_+$ beyond $``n/2"$ is also obtained (see Proposition \ref{higherint2}) by using Lemma \ref{lem:H+control}. 
	With this at hand, we immediately have the well-known $L^\infty_{loc}$-regularity estimate of the velocity field away from the origin (see Proposition \ref{lem:keylemma}) and we can improve the regularity estimate up to $C^{2,\alpha}_{loc}$ by a bootstrap argument (see Proposition \ref{thm:mainest}). 
	This regularity estimate enables us to apply the Leray-Schauder degree theory to get the existence result.
	
	We mention that the maximum principle for $H$ was observed and used in \cite{Serrin, GilbargWeinberger78, Amick}. Later on,  Frehse and R{\r u}\v{z}i\v{c}ka \cite{FrehseRuzicka96}, Struwe \cite{Struwe95}, and Li and Yang \cite{LiYang22} used the maximum principle    in a weak form to show the existence of regular solutions with finite energy in high dimensions. 
	
	\subsection{Organization of the paper}
	The rest of the paper is organized as follows. In Section \ref{sec:A-priori}, we give the a priori energy estimates. In Section \ref{sec:proof of main}, the $C^{2,\alpha}_{loc}$ estimates of solutions are first established by using the higher integrability of $H_+$ and a bootstrap argument. After that, the existence of solutions  is proved by the Leray-Schauder degree theory; in addition, we prove  the global uniqueness. In Section \ref{Section4}, we deal with the case that the external force $\Bf$ has only a nonnegative radial component, where it follows from \eqref{eqforH_0} that $H_+$ can be controlled by $u^r_+$ and $\Bf$. Together with some higher integrability estimate for  $u^r_+$ using \eqref{relation}, we have the desired a priori estimate for $H_+$ and the existence of solutions without restriction of the dimension.
	In Appendix \ref{app:singularinteg}, 
	we address two technical issues.

	\section{A priori energy estimates}\label{sec:A-priori}

	\subsection{Energy estimates}
	\label{subsec_EE}
	
	We first prove a preliminary result that the homogeneity of $\Bu$ and $\Bf$  implies the homogeneity of $p$ under mild regularity assumptions.
	\begin{lemma}\label{Lemmapressure} Assume that $n\geq 5$,  $\Bf$ is $(-3)$-homogeneous and $\Bf$ is locally Lipschitz on $\mathbb{R}^n \setminus \{0\}$. 
		Let $\Bu$ be a $(-1)$-homogeneous solution to \eqref{NS} in $\mathbb{R}^n \setminus \{0\}$ in the sense of distribution and $\Bu \in C^2(\mathbb{R}^n \setminus \{0\})$.
		Then the corresponding pressure $p\in C^1(\mathbb{R}^n \setminus\{0\})$ is uniquely defined up to a constant and can be chosen as $(-2)$-homogeneous.
	\end{lemma}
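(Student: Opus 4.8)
The plan is to first pin $p$ down as a genuine $C^1$ function, uniquely determined up to an additive constant, and then to fix that constant using the scaling symmetry \eqref{eq:sca}. For the first part, note that since $\Bu\in C^2(\mathbb{R}^n\setminus\{0\})$ and $\Bf$ is locally Lipschitz, the vector field
\[
\BG := \Bf + \Delta\Bu - (\Bu\cdot\nabla)\Bu
\]
is continuous on $\mathbb{R}^n\setminus\{0\}$: $\Bf$ is continuous, $\Delta\Bu$ is continuous because $\Bu\in C^2$, and $(\Bu\cdot\nabla)\Bu$ is even $C^1$. The distributional momentum equation in \eqref{NS} says exactly that $\nabla p = \BG$ in $\mathcal{D}'(\mathbb{R}^n\setminus\{0\})$. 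A distribution whose distributional gradient is a continuous function coincides, up to an additive constant, with a function of class $C^1$ on any connected open set; one proves this by localizing, mollifying, and integrating $\BG$ along segments (the mollifications $p\ast\rho_\veps$, renormalized to vanish at a fixed point, converge uniformly on compact subsets to a $C^1$ function with gradient $\BG$). Since $\mathbb{R}^n\setminus\{0\}$ is connected, it follows that $p\in C^1(\mathbb{R}^n\setminus\{0\})$ and that the only freedom left in $p$ is the choice of an additive constant.

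Next, for $\lambda>0$ set $p_\lambda(x):=\lambda^2 p(\lambda x)$. Because $\Bu$ is $(-1)$-homogeneous and $\Bf$ is $(-3)$-homogeneous, one has $\Bu_\lambda=\Bu$ and $\Bf_\lambda=\Bf$ in the notation of \eqref{eq:sca}, so by the scaling invariance of \eqref{NS} the pair $(\Bu,p_\lambda)$ is again a $C^1$ solution of \eqref{NS} with the same force $\Bf$. By the uniqueness established above, there is a constant $C(\lambda)$ with
\[
\lambda^2 p(\lambda x) = p(x) + C(\lambda), \qquad x\in\mathbb{R}^n\setminus\{0\},\ \lambda>0.
\]
Evaluating this identity at $\mu x$ and combining it with the identity for $\lambda\mu$ and for $\lambda$, $\mu$ separately yields the functional equation
\[
C(\lambda\mu) = C(\mu) + \mu^2 C(\lambda) = C(\lambda) + \lambda^2 C(\mu),
\]
hence $C(\mu)(1-\lambda^2) = C(\lambda)(1-\mu^2)$ for all $\lambda,\mu>0$. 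Fixing $\mu\ne 1$ gives $C(\lambda) = a\,(1-\lambda^2)$ with $a := C(\mu)/(1-\mu^2)$ (and $C(1)=0$ automatically). Replacing $p$ by $p+a$, which is still an admissible choice of pressure, the identity becomes $\lambda^2(p+a)(\lambda x) = (p+a)(x)$ for every $\lambda>0$, i.e. the chosen pressure is $(-2)$-homogeneous, which finishes the proof.

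The only step that is not pure bookkeeping is the soft-analysis fact used above, namely that a distribution with continuous gradient is $C^1$ up to a constant, together with the routine verification that every term of the momentum equation is continuous under the stated hypotheses; once $p$ is known to be $C^1$ and unique up to a constant, the scaling argument and the resolution of the functional equation for $C(\lambda)$ are elementary.
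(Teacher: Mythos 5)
Your proof is correct, and it takes a genuinely different route from the paper's. The paper represents $p$ as the Newtonian potential of $\partial_i\partial_j(u_iu_j)-\div\Bf$ plus a harmonic remainder $h$, invokes its singular-integral lemma (Lemma \ref{Riesz}) to see that the potential part is $(-2)$-homogeneous, and then kills $h$ by a Liouville-type classification of harmonic functions with $(-3)$-homogeneous gradient on $\mathbb{R}^n\setminus\{0\}$. You instead observe that $\nabla p=\Bf+\Delta\Bu-(\Bu\cdot\nabla)\Bu$ is continuous, so on the connected set $\mathbb{R}^n\setminus\{0\}$ the pressure is $C^1$ and unique up to an additive constant, and you then extract the homogeneity from the scaling symmetry: the functional equation $C(\lambda\mu)=C(\mu)+\mu^2C(\lambda)=C(\lambda)+\lambda^2C(\mu)$ forces $C(\lambda)=a(1-\lambda^2)$, and the check $\lambda^2\bigl(p(\lambda x)+a\bigr)=p(x)+a(1-\lambda^2)+\lambda^2 a=p(x)+a$ shows $p+a$ is exactly $(-2)$-homogeneous. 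This is more elementary and cleaner: it needs no Calder\'on--Zygmund estimate and no Liouville theorem, only the standard fact that a distribution with continuous gradient is $C^1$ up to a constant. What the paper's route buys in exchange is the quantitative bound $\|p\|_{L^\beta(S^{n-1})}\leq C\|\Bu\|_{L^{2\beta}(S^{n-1})}^2$ from Lemma \ref{Riesz}, which is quoted later in the a priori estimates (e.g.\ in Lemma \ref{lem:weighedestiforpressure}); your argument gives $p\in C^1(S^{n-1})$, which is qualitatively stronger but not that quantitative inequality. One point worth stating explicitly in your write-up is that both proofs presuppose the existence of a pressure distribution with $\nabla p=\Bf+\Delta\Bu-(\Bu\cdot\nabla)\Bu$ in $\mathcal{D}'(\mathbb{R}^n\setminus\{0\})$ (if the weak formulation were only against divergence-free test fields, one would first recover $p$ via de Rham's theorem); since the paper makes the same implicit assumption, this is not a gap relative to it.
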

	\begin{proof}		
		We note that the pressure $p$ solves
		\begin{equation*}
			\Delta p =- \partial_i\partial_j(u_iu_j)+ \div \Bf.
		\end{equation*}
		Then $p$ can be represented by 
		\begin{equation}\label{pressure}
			p(x) = \frac{1}{n(n-2) \omega_n} \int_{\mathbb{R}^n}\frac{1}{|x|^{n-2}} (\partial_i\partial_j(u_iu_j)- \div \Bf )(y) dy + h(x),
		\end{equation}
		where $\omega_n$  is the volume of the unit ball in $\mathbb{R}^n$ and $h(x)$ is a harmonic function on $\mathbb{R}^n\setminus\{0\}$.
		According to Lemma \ref{Riesz}, the first part in the right-hand side of \eqref{pressure}  is $(-2)$-homogeneous. It follows from \eqref{NS} and the homogeneities of $\Bu$ and $\Bf$ that $\nabla p$ is $(-3)$-homogeneous. This implies that $\nabla h$ is $(-3)$-homogeneous. A Liouville-type theorem for the harmonic function implies that $h$ is a constant. Indeed, when $n= 5$, the Liouville-type theorem for harmonic functions $\partial_i h$ yields that 
		\begin{equation*}
			\partial_i h= \frac{c_i}{|x|^3}.
		\end{equation*}
		Using $\partial_i \partial_j h= \partial_j \partial_i h$, we have
		$c_i=0$ for all $i$. This shows that $h$ is a constant and we can choose $h=0$ as this will not affect $\nabla p$. When $n\geq 6$, the Liouville-type theorem for harmonic functions $\partial_i h$ yields that 
		\begin{equation*}
			\partial_i h= 0.
		\end{equation*}
		Then $h$ is a constant, and we can also choose $h=0$ without affecting $\nabla p$.       
		Then according to Lemma \ref{Riesz}, $p(x)$ is $(-2)$-homogeneous and it holds that $p, \nabla p \in L^\beta(S^{n-1})$, $1<\beta< \infty$.
		
		Following the proof in \cite{Galdi11}, one can then verify that $(\Bu, p)$ is a classical solution to \eqref{NS} in $\mathbb{R}^n \setminus \{0\}$, and $p$ belongs to $C^1(\mathbb{R}^n \setminus\{0\})$ . 
	\end{proof}

	We start to derive the energy estimate for the self-similar solution $\Bu$.  To utilize the equations on the sphere later in the proof, we
	decompose 
	$$\Bu = \frac{\Bu^{t} +u^r\Be_r}{r}, \quad \Bf = \frac{\Bf^{t}+f^r\Be_r}{r^3},\quad p=\frac{\bar{p}}{r^2}. $$ Here, 
	$\Bu^{t}$ and  $u^r\Be_r$ are the tangential and  radial components of $r\Bu$, respectively.  
	Similarly, $\Bf^{t}$ and 
	$ f^r\Be_r$ are the tangential and radial components of $r^3\Bf$, respectively.  Under the self-similarity assumptions, 
	$\Bu^{t}$, $u^r$, $\Bf^{t}$, 
	$ f^r$ and $\bar{p}$ are all  0-homogeneous. Now we can state our first main estimate.
	\begin{lemma}\label{halfenergyestimate}
		Assume $n\geq 4$.  For any self-similar solution $\Bu$ of \eqref{NS} such that $\Bu \in C^2(\mathbb{R}^n \setminus\{0\})$, it holds that 
		\begin{equation}\label{eq:keydec-2}
			\begin{aligned}
				&\int_{S^{n-1}} |\nabla \Bu|^2 +(n-4)|\Bu|^2 d\sigma			
				\leq 	\int_{S^{n-1}} \Bf\cdot \Bu + (n-4)H_{+} u^r_{-}\,  d\sigma + \frac{n-4}{2} \int_{S^{n-1}} f^r u^r_{+}\, d\sigma.
			\end{aligned}
		\end{equation}
	\end{lemma}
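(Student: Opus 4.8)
The plan is to start from the standard energy identity for the Navier--Stokes equations, obtained by testing \eqref{NS} with $\Bu$ and integrating over an annulus $\{1 < |x| < R\}$, and then using $(-1)$-homogeneity to reduce all integrals to integrals over $S^{n-1}$. Concretely, multiplying the momentum equation by $\Bu$ and integrating by parts, the viscous term gives $\int |\nabla \Bu|^2$, the convection term $(\Bu\cdot\nabla)\Bu\cdot\Bu = \Bu\cdot\nabla(\tfrac12|\Bu|^2)$ combines with the pressure gradient into $\div(\Bu H)$ where $H = \tfrac12|\Bu|^2 + p$ is the total head pressure, and the force term gives $\int \Bf\cdot\Bu$. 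Because $\Bu$ is $(-1)$-homogeneous, $|\nabla\Bu|^2$ and $|\Bu|^2$ are $(-4)$-homogeneous, $\Bf\cdot\Bu$ is $(-4)$-homogeneous, and $\Bu H$ is $(-3)$-homogeneous; the divergence term on the annulus reduces, via the coarea/polar-coordinate formula, to a boundary term on $S^{n-1}$ (the spheres $|x|=1$ and $|x|=R$ with the $R$-boundary scaling away because $\Bu H$ is $(-3)$-homogeneous, $n-3 > 0$), leaving $\int_{S^{n-1}} \Bu\cdot\Be_r \, H \,d\sigma = \int_{S^{n-1}} r^{-1}u^r \cdot H \, d\sigma$, but since we work on the unit sphere this is just $\int_{S^{n-1}} u^r H\, d\sigma$ after accounting for homogeneity factors. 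Carrying out the polar-coordinate bookkeeping carefully, one arrives at the exact identity quoted in the introduction,
\begin{equation*}
	\int_{S^{n-1}} \left( |\nabla \Bu|^2 + (n-4)|\Bu|^2 + (n-4) H u^r \right) d\sigma = \int_{S^{n-1}} \Bf\cdot\Bu \, d\sigma.
\end{equation*}

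Next I would handle the troublesome cubic term $(n-4)\int_{S^{n-1}} H u^r \, d\sigma$. Writing $H u^r = H u^r_+ - H u^r_-$ and $H = H_+ - H_-$, the goal is to discard the terms with a favorable sign. The term $-(n-4)\int H u^r_+$ should be split as $-(n-4)\int H_+ u^r_+ + (n-4)\int H_- u^r_+$; here $-(n-4)\int H_+ u^r_+ \le 0$ (since $n \ge 4$) and can be dropped, while the remaining piece involving $H_-$ needs to be rewritten. This is where the key relation \eqref{relation} enters: integrating \eqref{relation} against $u^r_+$ over $S^{n-1}$, the transport term $\int_{S^{n-1}} (\Bu^t\cdot\nabla^{S^{n-1}}u^r) u^r_+ \, d\sigma$ is handled by noting that $\Bu^t\cdot\nabla^{S^{n-1}}(\tfrac12 (u^r_+)^2) = (\Bu^t\cdot\nabla^{S^{n-1}}u^r)u^r_+$ and $\nabla^{S^{n-1}}\cdot\Bu^t$ relates to $u^r$ via the divergence-free condition written in spherical coordinates, so it integrates to something controllable (in fact to a multiple of $\int u^r (u^r_+)^2$, which has a sign or can be absorbed); similarly the Laplacian term gives $\int |\nabla^{S^{n-1}} u^r_+|^2 \ge 0$. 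Thus from \eqref{relation} one extracts an inequality of the form $\int_{S^{n-1}} (2H + f^r) u^r_+ \, d\sigma \ge 0$ (modulo the transport contribution, whose sign must be checked), i.e. $-\int H u^r_+ \le \tfrac12 \int f^r u^r_+$, which produces precisely the last term $\tfrac{n-4}{2}\int_{S^{n-1}} f^r u^r_+\, d\sigma$ in \eqref{eq:keydec-2}.

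For the term $-(n-4)\int H u^r_- = -(n-4)\int H_+ u^r_- + (n-4)\int H_- u^r_-$, the piece $(n-4)\int H_- u^r_- \ge 0$ cannot simply be dropped if we want an upper bound, so one must again invoke \eqref{relation} tested against $-u^r_- = \min\{0, u^r\}$ to convert $\int H_- u^r_-$ into terms with the right sign together with a $\int f^r u^r_-$ contribution; alternatively, and more cleanly matching the stated form, one keeps $+(n-4)\int H_+ u^r_-$ on the right-hand side (it is genuinely there in \eqref{eq:keydec-2}) and uses \eqref{relation} only to remove the $H_-$-pieces. So the bookkeeping is: drop $-(n-4)\int H_+ u^r_+ \le 0$, keep $+(n-4)\int H_+ u^r_-$, and use \eqref{relation} to show the combined $H_-$ terms are bounded by $\tfrac{n-4}{2}\int f^r u^r_+$ plus nonpositive quantities. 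Assembling everything yields \eqref{eq:keydec-2}.

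The main obstacle I anticipate is the careful justification of the integration by parts on the unbounded domain $\mathbb{R}^n\setminus\{0\}$: one must truncate to an annulus, verify that the inner boundary term at $|x| = \varepsilon$ vanishes as $\varepsilon \to 0$ (which uses $n \ge 4$ so that the $(-3)$-homogeneous flux $\Bu H$ integrated over a sphere of radius $\varepsilon$ scales like $\varepsilon^{n-3} \to 0$) and the outer boundary term at $|x| = R$ likewise vanishes, and confirm that $\Bu \in C^2$ away from the origin gives enough regularity for all the manipulations (including that $p$ is $(-2)$-homogeneous, which is Lemma \ref{Lemmapressure}, and that the distributional formulation across the origin causes no extra Dirac contributions when $n \ge 4$). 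A secondary technical point is pinning down the exact sign and form of the transport-term contribution when testing \eqref{relation} against $u^r_\pm$, since $u^r_\pm$ is only Lipschitz, not $C^1$; one argues by approximation or by the chain rule for Lipschitz functions composed with $C^1$ functions, and uses the divergence structure $\div_{S^{n-1}}\Bu^t + (\text{something})u^r = 0$ coming from $\div\Bu = 0$ in spherical coordinates to rewrite $\int (\Bu^t\cdot\nabla^{S^{n-1}}u^r_\pm) \cdot (\text{stuff})$ as a manageable lower-order term.
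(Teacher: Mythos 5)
Your proposal follows the paper's own route: derive the spherical energy identity, bound $Hu^r_-\le H_+u^r_-$, and test the relation \eqref{relation} against $u^r_+$ (with $\div_{S^{n-1}}\Bu^{t}+(n-2)u^r=0$ turning the transport term into $\tfrac{n-2}{2}\int_{S^{n-1}}(u^r_+)^3\,d\sigma\ge 0$) to obtain $-\int_{S^{n-1}}Hu^r_+\,d\sigma\le\tfrac12\int_{S^{n-1}}f^ru^r_+\,d\sigma$. Two slips need fixing. First, in Step 1 the flux of $H\Bu$ through $|x|=R$ does \emph{not} scale away: it equals $R^{n-4}\int_{S^{n-1}}Hu^r\,d\sigma$, and (with your unweighted test function $\Bu$) it is precisely the difference of the two boundary fluxes, divided by $R-1$ with $R\to 1$, that produces the coefficients $(n-4)$ in front of $|\Bu|^2$ and $Hu^r$; the paper instead tests with $\Bu\,r^{4-n}$, which makes every integrand exactly $(-n)$-homogeneous, so the boundary fluxes cancel and the $(n-4)$ terms arise from differentiating the weight. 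Second, your third paragraph starts from ``$-(n-4)\int Hu^r_-$'', but after splitting $-(n-4)\int Hu^r$ the term carrying $u^r_-$ enters with a plus sign, $+(n-4)\int Hu^r_-$, and is bounded by $(n-4)\int H_+u^r_-$ simply because $H\le H_+$ and $u^r_-\ge 0$; no further appeal to \eqref{relation} is needed there, and the ``combined $H_-$ terms'' detour is not closable as described (having discarded $-(n-4)\int H_+u^r_+$, you cannot absorb the $2\int H_+u^r_+$ that reappears when you try to bound $\int H_-u^r_+$ from \eqref{relation}). The clean bookkeeping you already state in your second paragraph --- keep $-\int Hu^r_+$ intact and bound it by $\tfrac12\int f^ru^r_+$ --- is exactly the paper's argument and is all that is required.
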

	\begin{proof} The proof is divided into two steps.
		
		{\it Step 1.} Multiplying \eqref{NS} by $\Bu r^{4-n}$ ($r=|x|$) and integrating both sides on the annulus $B_{R}\setminus B_{1}$ ($R>1$) yields
		\begin{equation*}
			\int_{B_{R}\setminus B_{1}} 
			\left(
			|\nabla \Bu|^2 r^{4-n} +(n-4)|\Bu|^2 r^{3-n}   + (n-4)Hu^r r^{2-n} 
			\right)
			dx = 	\int_{B_{R}\setminus B_{1}} \Bf\cdot \Bu \, r^{4-n} dx,
		\end{equation*} 
		where integration by  parts and the self-similarity of  $\Bu$ have been used.
		Now dividing it by $R-1$ and sending $R\to 1$ yields the following energy estimate on $S^{n-1}$,
		\begin{equation}\label{eq:energyest}
			\int_{S^{n-1}} |\nabla \Bu|^2 +(n-4)|\Bu|^2 + (n-4)Hu^rd\sigma = 	\int_{S^{n-1}} \Bf\cdot \Bu \, d\sigma.
		\end{equation}
		We remark that 
		\begin{equation}\label{eq:Sobolev}
			\int_{S^{n-1}} |\nabla \Bu|^2 d\sigma = \int_{S^{n-1}} \left|\nabla^{S^{n-1}} \Bu\right|^2 + |\partial_r \Bu|^2d\sigma= \int_{S^{n-1}} \left|\nabla^{S^{n-1}} \Bu\right|^2 + | \Bu|^2d\sigma,
		\end{equation}
		where $\nabla^{S^{n-1}}$ is the spherical gradient defined in \eqref{eq:sphegradi}, and in the last equality we  used the property $\partial_r \Bu =-\Bu$, which is a consequence of the self-similarity of $\Bu$.

		{\it Step 2.} It follows from \eqref{eq:energyest} that we have
		\begin{equation}\label{eq:keydec}
			\begin{aligned}
				\int_{S^{n-1}} |\nabla \Bu|^2 +(n-4)|\Bu|^2 d\sigma
				& = 	\int_{S^{n-1}} \Bf\cdot \Bu -(n-4)Hu^rd\sigma \\
				& =\int_{S^{n-1}} \Bf\cdot \Bu -(n-4) H u^r_{+} + (n-4) H u^r_{-} d\sigma\\
				& \leq \int_{S^{n-1}} \Bf\cdot \Bu -(n-4)H u^r_{+} +(n-4)H_{+}u^r_{-} d\sigma.
			\end{aligned}
		\end{equation}
		To control the  term  $\int_{S^{n-1}} H u^r_+ d\sigma$, our key observation is that there is a
		nice relation between $u^r$ and $H$ under the self-similarity assumption on  $\Bu$. 
		To see this, we examine the equations \eqref{NS} on the sphere $S^{n-1}$ by utilizing the self-similarity, where the relationship between $u^r$ and $H$ becomes clear.
		It follows from \eqref{NS} that $u^r$ satisfies the following equation
		\begin{align*}
			\label{NSnorm}	
			-\Delta_{S^{n-1}}u^{r} 
			+\left( 
			\Bu^{t} \cdot \nabla^{S^{n-1}} u^{r}  -(u^{r} )^2  - |\Bu^{t}|^2
			\right)
			-2\bar p
			&= {f}^{r}
			\quad \text{on }S^{n-1},
		\end{align*} 
		which  was also derived in \cite[Appendix 1]{Sverak11}.
		The above equation  of $u^r$  can  be written as
		\begin{equation} \label{NSnorm2}
			-\Delta_{S^{n-1}}u^{r} + 
			\Bu^{t} \cdot\nabla^{S^{n-1}} u^{r}  
			= 2H+{f}^{r} \quad \text{on }S^{n-1}.
		\end{equation}
		Multiplying \eqref{NSnorm2} by $u^r_+$ and integrating both sides on $S^{n-1}$, we have
		\begin{equation}\label{eq:ur_+}
			\begin{aligned}
				& \int_{S^{n-1}} \left|\nabla^{S^{n-1}} (u^r_+)\right|^2 d\sigma + \frac{ (n-2) }{2} \int_{S^{n-1}}  (u^r_+)^3 \, d\sigma \\
				=& 2 \int_{S^{n-1}} H  u^r_+ \, d\sigma + \int_{S^{n-1}} f^r u_{+}^r \, d\sigma,\\
			\end{aligned}
		\end{equation}
		where the integration by parts and the divergence-free condition
		\begin{equation}
			\label{divfree}  \div_{S^{n-1}}(\Bu^{t} ) + (n-2)u^{r} =0 \quad \text{on }S^{n-1}
		\end{equation}
		has been used.
		Taking \eqref{eq:ur_+} into \eqref{eq:keydec} gives \eqref{eq:keydec-2}.
	\end{proof}

	Next, we derive the estimates for $H_+$. To do so, we consider the equation of the total head pressure $H$:
	\begin{align} \label{eq:headpre2}
		-\Delta \, H
		+ \Bu \cdot \nabla H
		= - \frac{1}{2} |\partial_i u_j - \partial_j u_i |^2
		+\Bf\cdot \Bu - \div \Bf \quad \text{in }\mathbb{R}^n \setminus \{0\}.
	\end{align}
	
	
	\begin{lemma}\label{lem:H+control}
		Assume $n\geq 5$. For any self-similar solution $\Bu$ of \eqref{NS} such that $\Bu \in C^2(\mathbb{R}^n \setminus\{0\})$,
		it holds that
		\begin{equation}\label{eq:secestforH}
			\|H_+\|_{L^{\theta}(S^{n-1})} \leq C(n)\left( \|\Bf\cdot \Bu\|_{L^q(S^{n-1})} + \|\div \Bf\|_{L^q(S^{n-1})} \right),
		\end{equation}
		where
		\begin{equation}\label{eq:theta}
			\theta := \frac{(n-2)(n-1)}{2(n-3)}\quad \text{and} \quad q: =\frac{(n-2)(n-1)}{4n-10}.
		\end{equation} 
	\end{lemma}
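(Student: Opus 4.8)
The goal is an $L^\theta(S^{n-1})$ estimate for $H_+$ coming from the elliptic equation \eqref{eq:headpre2} for $H$. The key observation, as in the works of Struwe and Frehse--R\r{u}\v{z}i\v{c}ka, is that the right-hand side of \eqref{eq:headpre2} has a favorable structure: the dominant term $-\frac12|\partial_i u_j-\partial_j u_i|^2$ is nonpositive, so $H$ is (up to lower-order forcing) a subsolution of an elliptic equation, and $H_+$ inherits good integrability through a weak maximum-principle/Moser-type iteration, \emph{enhanced by the self-similarity}. First I would reduce \eqref{eq:headpre2} to an equation on the sphere: since $H$ is $(-2)$-homogeneous (by Lemma \ref{Lemmapressure}, $p$ is $(-2)$-homogeneous and $|\Bu|^2$ is clearly $(-2)$-homogeneous), writing $H = \bar H r^{-2}$ with $\bar H$ $0$-homogeneous and using \eqref{eq:sphegradi} together with $\partial_r H = -2H/r$, the operator $-\Delta + \Bu\cdot\nabla$ applied to $H$ becomes, on $S^{n-1}$,
\[
-\Delta_{S^{n-1}} \bar H + \Bu^t\cdot\nabla^{S^{n-1}}\bar H + (\text{zeroth-order terms in }\bar H) = r^2\left(-\tfrac12|\partial_i u_j-\partial_j u_i|^2 + \Bf\cdot\Bu - \div\Bf\right),
\]
where the right-hand side is again $0$-homogeneous and the leading piece is $\le r^2(\Bf\cdot\Bu - \div\Bf)$. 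This is the ``dimension-reduction'': the effective equation lives on the $(n-1)$-dimensional sphere.

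**Main steps.** (1) Test the spherical equation for $\bar H$ against $(\bar H_+)^{s-1}$ for a suitable exponent $s>1$; the transport term $\Bu^t\cdot\nabla^{S^{n-1}}\bar H$ integrates against $(\bar H_+)^{s-1}$ to a multiple of $\int \div_{S^{n-1}}\Bu^t (\bar H_+)^s = -(n-2)\int u^r (\bar H_+)^s$ by \eqref{divfree}, which is controlled, and the zeroth-order terms are handled similarly; the nonpositive vorticity term is dropped. (2) This yields $\|\nabla^{S^{n-1}}(\bar H_+)^{s/2}\|_{L^2}^2$ bounded by $\int (\Bf\cdot\Bu - \div\Bf)(\bar H_+)^{s-1}$ plus controllable terms, and then Sobolev embedding on $S^{n-1}$ (dimension $n-1$) upgrades the $L^2$-gradient bound to an $L^{(n-1)s/(n-3)}$ bound on $\bar H_+$ — the factor $\tfrac{n-1}{n-3}$ being the Sobolev gain $\tfrac{2(n-1)}{n-3}$ halved because we raised to power $s/2$. (3) Balancing the iteration exponent against the Hölder conjugate of $q$ on the right-hand side, i.e. choosing $s$ so that the self-improvement $\tfrac{(n-1)s}{n-3}$ relates correctly to $s-1$ and to $q$ as defined in \eqref{eq:theta}, pins down $\theta = \tfrac{(n-2)(n-1)}{2(n-3)}$ and $q = \tfrac{(n-2)(n-1)}{4n-10}$; one checks $q < \theta$ so the iteration actually gains. (4) Finally, absorb any $\bar H_+$-dependent terms from the cubic-type remainders using Young's inequality against the gradient term, since those terms are genuinely lower order after the Sobolev step for $n$ in the allowed range.

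**The main obstacle.** The delicate point is the bookkeeping of exponents in steps (2)--(3): one must verify that the terms involving $u^r (\bar H_+)^s$ (from the transport and zeroth-order parts) and the term $\frac12|\Bu|^2(\bar H_+)^{s-1}$ hidden inside $\Bf\cdot\Bu$ — wait, rather inside the rewriting of $H$ — do not overwhelm the good term, and that the single-step estimate can be closed \emph{without} iteration, i.e. the claimed $\theta$ is reached in one testing step for the right choice of $s$. Concretely, I expect one takes $s-1 = \tfrac{2(n-3)}{(n-2)}$ so that $(\bar H_+)^{s-1} \in L^{\theta/(s-1)}$ pairs with $\Bf\cdot\Bu \in L^q$ via Hölder with the third exponent absorbed by Sobolev; checking the arithmetic $\tfrac{1}{q} + \tfrac{s-1}{\theta} = 1$ with the Sobolev exponent is exactly the computation that forces the stated formulas. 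A secondary technical nuisance is justifying the integration by parts and the use of $(\bar H_+)^{s-1}$ as a test function given only $\Bu\in C^2(\mathbb{R}^n\setminus\{0\})$ and $H$ possibly sign-changing; but since everything is smooth away from the origin and $0$-homogeneous on $S^{n-1}$, which is compact, this is routine (one may also cite the appendix for the singular-integral technicalities). Once the exponents are matched, the estimate \eqref{eq:secestforH} follows directly.
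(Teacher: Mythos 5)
Your overall architecture matches the paper's: test the equation for $H$ against a power of $H_+$, discard the nonpositive vorticity term, use the Sobolev inequality on $S^{n-1}$, and match exponents by H\"older. The paper does this directly on the annulus $B_R\setminus B_1$ with the test function $H_+^{\alpha}$, $\alpha=\frac{n-4}{2}$, rather than first rewriting the equation on the sphere, but that difference is cosmetic. The problem is in how you treat the convection term, which is exactly where the lemma is delicate.

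When you reduce to the sphere and test against $(\bar H_+)^{s-1}$, the transport term yields $\frac{n-2}{s}\int_{S^{n-1}} u^r \bar H_+^{s}\,d\sigma$ (via \eqref{divfree}), and the zeroth-order part of the reduced operator contains $-2u^r\bar H$, which yields $-2\int_{S^{n-1}} u^r\bar H_+^{s}\,d\sigma$. You assert these are ``controlled'' and can be ``absorbed by Young's inequality against the gradient term'' as ``genuinely lower order.'' That is not true: $\int u^r\bar H_+^s\,d\sigma$ is a cubic-type term with no sign, and absorbing it would require a bound on $\|u^r\|_{L^{(n-1)/2}(S^{n-1})}$ (or similar), which is not available at this stage — the estimate for $u^r$ in Lemma \ref{lem:urestim} is derived \emph{from} the present lemma, so invoking it would be circular. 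The actual mechanism is an exact algebraic cancellation: the two contributions cancel identically precisely when $s=\frac{n-2}{2}$, i.e.\ $s-1=\frac{n-4}{2}$. In the paper's formulation this is the statement that $\int_{B_R\setminus B_1}\Bu\cdot\nabla H\,H_+^{\alpha}\,dx=0$ for $\alpha=\frac{n-4}{2}$ by the divergence-free condition together with the cancellation of the boundary integrals on $\partial B_R$ and $\partial B_1$, which occurs only at this homogeneity. Relatedly, your guessed exponent $s-1=\frac{2(n-3)}{n-2}$ is incorrect; the constraints you correctly write down ($\frac1q+\frac{s-1}{\theta}=1$ and the Sobolev matching $\frac{s(n-1)}{n-3}=\theta$) in fact force $s-1=\frac{n-4}{2}$, the same value required for the cancellation. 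Without identifying that cancellation, the argument does not close.
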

	
	\begin{proof}
		Multiplying \eqref{eq:headpre2} with $H^{\alpha}_+, \alpha= \frac{n-4}{2},$ and integrating it by parts on $B_R\setminus B_1$ yields
		\begin{align}
			\label{eq:energyest775}
			\begin{aligned}
				\frac{4\alpha}{(\alpha+1)^2}
				\int _{B_R\setminus B_1}
				\left|
				\nabla (H_+^{\frac{\alpha+1}{2}})
				\right|^2\, dx
				&=
				\int _{B_R\setminus B_1}
				\left(
				-\frac{1}{2} |\partial_i u_j - \partial_j u_i|^2 +\Bf\cdot \Bu - \div  \Bf
				\right) \, H_+^\alpha \, dx
				\\
				&\leq
				\int _{B_R\setminus B_1}
				\left(
				\Bf\cdot \Bu - \div  \Bf
				\right) \, H_+^\alpha \, dx.
			\end{aligned}
		\end{align}
		Here, by using self-similarity, the boundary integrals over the boundary $\partial (B_R\setminus B_1)$ will vanish;  the choice of the special exponent $\alpha= \frac{n-4}{2}$ is made for this purpose. 
		
		In fact, one needs to prove \eqref{eq:energyest775} by using a weak formulation of \eqref{eq:headpre2} instead of using it in a classical sense since we only know $H$ is in $C^1 (\mathbb{R}^n \setminus \{0\})$. Rigorous justification is given in Appendix \ref{app:singularinteg}. 
		
		Dividing both sides of \eqref{eq:energyest775} by $R-1$ and sending $R\to 1+$, one can obtain estimates on $S^{n-1}$:
		\begin{align}
			\label{eq:energyest795}
			\frac{4\alpha}{(\alpha+1)^2}
			\int _{S^{n-1}}
			\left|
			\nabla (H_+^{\frac{\alpha+1}{2}})
			\right|^2\, d\sigma 
			\leq
			\int _{ S^{n-1} }
			\left(
			\Bf\cdot \Bu - \div  \Bf
			\right) \, H_+^\alpha \, d\sigma.
		\end{align}
		Using (-2)-homogeneity of $H$, one can obtain $\partial_r (H_+^{\frac{\alpha+1}{2}})=-(\alpha+1) H_+^{\frac{\alpha+1}{2}}$ on $S^{n-1}$ and re-write \eqref{eq:energyest795} as
		\begin{equation}\label{eq:energyestofH}
			\begin{aligned}
				&\quad \frac{8(n-4)}{(n-2)^2} 
				\int_{S^{n-1}} 
				\left|
				\nabla^{S^{n-1}} H_+^{\frac{n-2}{4}}
				\right|^2 d \sigma 
				+ (2n-8) \int_{S^{n-1}}H_+^{\frac{n-2}{2}} d \sigma \\
				&\leq \int_{S^{n-1}} \Bf\cdot \Bu H_+^{\frac{n-4}{2}}d\sigma  - \int_{S^{n-1}} \div \Bf H_+^{\frac{n-4}{2}} d\sigma.
			\end{aligned}			
		\end{equation}
		Using the Sobolev inequality (see \cite{Aubin76}) for $H_+^{\frac{n-2}{4}}$ on $S^{n-1}$ and the H\"older inequality, we obtain 
		\begin{equation}\label{eq:firstestforH}
			\begin{aligned}
				\left\|H_+^{\frac{n-2}{4}}\right\|_{L^{\frac{2(n-1)}{n-3}}(S^{n-1})}^2 &\leq C\left(\left\|\nabla^{S^{n-1}} H_+^{\frac{n-2}{4}}\right\|_{L^{2}(S^{n-1})}^2 + \left\| H_+^{\frac{n-2}{4}}\right\|_{L^{2}(S^{n-1})}^2\right)\\       
				&\leq C\left( \|\Bf\cdot \Bu\|_{L^q(S^{n-1})} + \|\div \Bf\|_{L^q(S^{n-1})} \right)\left\|H_+^{\frac{n-4}{2}}\right\|_{L^{q'}(S^{n-1})},
			\end{aligned}
		\end{equation}
		where  
		$q'=\frac{(n-2)(n-1)}{(n-3)(n-4)}$ so that $\frac{1}{q}+\frac{1}{q'}=1$. Note that $\frac{n-4}{2} q'= \frac{(n-2)(n-1)}{2(n-3)}$. Then a straightforward computation yields  \eqref{eq:secestforH}.
	\end{proof}

	To close the energy estimates, we still need some estimates for $u^r$. For this purpose, we integrate the equation \eqref{NSnorm2}  for $u^r$ on the sphere directly, which gives estimates of $u^r$ in terms of  $H_+$ and $\Bf$, and hence, $\Bu$ and $\Bf$. 
	\begin{lemma}\label{lem:urestim}
		Assume  $n\geq 5$.  For any self-similar solution $\Bu$ of \eqref{NS} such that $\Bu \in C^2(\mathbb{R}^n \setminus\{0\})$, we have 
		\begin{equation}\label{eq:L2estur1}
			\|u^r\|_{L^2(S^{n-1})}^2 + \|H_-\|_{L^1(S^{n-1})} \leq C(n)\left( \|\Bf\cdot \Bu\|_{L^q(S^{n-1})} + \|\div \Bf\|_{L^q(S^{n-1})}  +    \|f^r\|_{L^1(S^{n-1})} \right),
		\end{equation}
		where $q$ is defined in \eqref{eq:theta}.
	\end{lemma}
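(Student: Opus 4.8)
The plan is to integrate the spherical equation \eqref{NSnorm2} for $u^{r}$ directly over $S^{n-1}$ and then insert the bound for $H_{+}$ furnished by Lemma \ref{lem:H+control}. Because $\Bu\in C^{2}(\mathbb{R}^{n}\setminus\{0\})$, both $u^{r}$ and $\Bu^{t}$ are of class $C^{2}$ on $S^{n-1}$, so the integrations by parts below are classical and no weak formulation is needed at this stage, in contrast with the proof of Lemma \ref{lem:H+control} where $H$ was only $C^{1}$.

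First I would integrate \eqref{NSnorm2} over the closed manifold $S^{n-1}$. The term $\int_{S^{n-1}}\Delta_{S^{n-1}}u^{r}\,d\sigma$ vanishes. For the transport term I would write $\Bu^{t}\cdot\nabla^{S^{n-1}}u^{r}=\div_{S^{n-1}}(u^{r}\Bu^{t})-u^{r}\,\div_{S^{n-1}}\Bu^{t}$; upon integration the divergence contribution disappears, and the divergence-free identity \eqref{divfree}, namely $\div_{S^{n-1}}\Bu^{t}=-(n-2)u^{r}$, turns the remaining piece into $(n-2)\int_{S^{n-1}}(u^{r})^{2}\,d\sigma$. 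This yields the identity
\[
(n-2)\int_{S^{n-1}}(u^{r})^{2}\,d\sigma=2\int_{S^{n-1}}H\,d\sigma+\int_{S^{n-1}}f^{r}\,d\sigma .
\]
Splitting $H=H_{+}-H_{-}$ and moving $-2\int_{S^{n-1}}H_{-}\,d\sigma$ to the left-hand side, both resulting left-hand terms carry a nonnegative sign, whence, using $\int_{S^{n-1}}f^{r}\,d\sigma\le\|f^{r}\|_{L^{1}(S^{n-1})}$,
\[
(n-2)\,\|u^{r}\|_{L^{2}(S^{n-1})}^{2}+2\,\|H_{-}\|_{L^{1}(S^{n-1})}\le 2\,\|H_{+}\|_{L^{1}(S^{n-1})}+\|f^{r}\|_{L^{1}(S^{n-1})}.
\]

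To finish, I would control $\|H_{+}\|_{L^{1}(S^{n-1})}$. Since $S^{n-1}$ has finite measure and the exponent $\theta=\frac{(n-2)(n-1)}{2(n-3)}$ from \eqref{eq:theta} satisfies $\theta\ge 1$ for every $n\ge 5$ (equivalently $n^{2}-5n+8\ge 0$, which always holds), H\"older's inequality gives $\|H_{+}\|_{L^{1}(S^{n-1})}\le C(n)\,\|H_{+}\|_{L^{\theta}(S^{n-1})}$, and then \eqref{eq:secestforH} bounds the right-hand side by $C(n)\big(\|\Bf\cdot\Bu\|_{L^{q}(S^{n-1})}+\|\div\Bf\|_{L^{q}(S^{n-1})}\big)$. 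Combining this with the previous display, dividing by $\min\{n-2,2\}\ge 2$, and absorbing all constants into a single $C(n)$ gives \eqref{eq:L2estur1}.

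I do not expect a genuine obstacle here: the estimate is essentially a bookkeeping consequence of \eqref{NSnorm2}, \eqref{divfree}, and Lemma \ref{lem:H+control}. The only two points that deserve a sentence of justification are the vanishing of the divergence/boundary contributions in the first step — which follows from the $C^{2}$ regularity of $u^{r}$ and $\Bu^{t}$ on the closed manifold $S^{n-1}$, exactly as in the derivation of \eqref{NSnorm2} itself — and the trivial verification that $\theta\ge 1$, needed for the H\"older step. In particular no smallness hypothesis and no dimension restriction beyond $n\ge 5$ enters the argument.
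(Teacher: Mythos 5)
Your proposal is correct and follows essentially the same route as the paper: integrate \eqref{NSnorm2} over $S^{n-1}$, use \eqref{divfree} to convert the transport term into $(n-2)\int_{S^{n-1}}|u^r|^2\,d\sigma$, split $H=H_+-H_-$, and control $\|H_+\|_{L^1(S^{n-1})}$ via H\"older's inequality and Lemma \ref{lem:H+control}. No issues.
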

	
	\begin{proof}
		Integrating both sides of \eqref{NSnorm2} on the sphere and using the divergence-free condition \eqref{divfree} yield 
		\begin{equation*}
			\int_{S^{n-1}} (n-2)|u^r|^2 d\sigma = \int_{S^{n-1}} 2H + f^{r} d\sigma.
		\end{equation*}
		This is equivalent to 
		\begin{equation}\label{eq:ridalest}
			\int_{S^{n-1}} (n-2)|u^r|^2 d\sigma + \int_{S^{n-1}} 2H_- d \sigma = \int_{S^{n-1}} 2H_+ + f^{r} d\sigma.
		\end{equation}
		Therefore, one has
		\begin{equation}\label{eq:L2estur}
			(n-2)\|u^r\|_{L^2(S^{n-1})}^2 + 2\|H_-\|_{L^1(S^{n-1})} \leq 2\|H_+\|_{L^1(S^{n-1})} + \|f^r\|_{L^1(S^{n-1})} . 
		\end{equation}
		Now applying  the H\"older inequality and using \eqref{eq:secestforH} yield
		\begin{equation*}
			\begin{aligned}
				&\quad (n-2)\|u^r\|_{L^2(S^{n-1})}^2 + 2\|H_-\|_{L^1(S^{n-1})} \\
				&\leq C	\|H_+\|_{L^{\theta}(S^{n-1})} +  \|f^r\|_{L^1(S^{n-1})} \\
				&\leq C\left( \|\Bf\cdot \Bu\|_{L^q(S^{n-1})} + \|\div \Bf\|_{L^q(S^{n-1})} \right) +  \|f^r\|_{L^1(S^{n-1})},
			\end{aligned}
		\end{equation*}
		where $\theta$ and $q$ are defined in \eqref{eq:theta}.
		This completes the proof.
	\end{proof}
	
	{
		Combining Lemmas \ref{halfenergyestimate}-\ref{lem:urestim}, we have a further estimate for $\|\nabla \Bu\|_{L^2(S^{n-1})}$. 
		\begin{pro}\label{lemmaenergyestimatenleq10}
			Assume $n\geq 5$. For any self-similar solution $\Bu$ of \eqref{NS} such that $\Bu\in C^2(\mathbb{R}^n\setminus \{0\})$, we have 
			\begin{equation}\label{energyestimatenew2}
				\int_{S^{n-1}}|\nabla \Bu|^2 + | \Bu|^2d\sigma \leq C \left( \|\Bf\|_{L^\infty(S^{n-1})}^2 + \|\Bf\|_{\textnormal{Lip}(S^{n-1})}^\frac32 + \|\Bf\|_{L^\infty(S^{n-1})}^{\frac32}\|\Bu\|_{L^q(S^{n-1})}^{\frac32} \right). 
			\end{equation}
		\end{pro}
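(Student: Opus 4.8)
The plan is to combine Lemmas~\ref{halfenergyestimate}, \ref{lem:H+control}, and~\ref{lem:urestim}, bounding each of the three terms on the right-hand side of~\eqref{eq:keydec-2} by the quantities in~\eqref{energyestimatenew2}. Throughout write $L:=\|\Bf\|_{L^\infty(S^{n-1})}$, $M:=\|\Bf\|_{\textnormal{Lip}(S^{n-1})}$ (so that $L\le M$), $N:=\|\Bu\|_{L^q(S^{n-1})}$, and $E:=\int_{S^{n-1}}|\nabla\Bu|^2+|\Bu|^2\,d\sigma$; these are all finite since $\Bu\in C^2(\mathbb{R}^n\setminus\{0\})$ is bounded on the compact sphere. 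First I record the routine bounds on the data: $\|\Bf\cdot\Bu\|_{L^q(S^{n-1})}\le L\,N$; $\|\div\Bf\|_{L^q(S^{n-1})}\le C(n)M$, since the $(-3)$-homogeneity of $\Bf$ makes its divergence on $S^{n-1}$ bounded by $\|\Bf\|_{L^\infty(S^{n-1})}+\|\nabla^{S^{n-1}}\Bf\|_{L^\infty(S^{n-1})}\le C(n)M$; and $\|f^r\|_{L^1(S^{n-1})}\le C(n)L$, since $f^r=\Bf\cdot\Be_r$ on $S^{n-1}$. Plugging these into Lemma~\ref{lem:H+control} and Lemma~\ref{lem:urestim}, and using $L\le M$ to discard the leftover $\|f^r\|_{L^1}$ contribution, gives the two structural estimates
\begin{equation*}
\|H_+\|_{L^\theta(S^{n-1})}\le C(LN+M)
\qquad\text{and}\qquad
\|u^r\|_{L^2(S^{n-1})}\le C(LN+M)^{1/2}.
\end{equation*}

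It remains to estimate the right-hand side of~\eqref{eq:keydec-2}. The linear term and the term $\int_{S^{n-1}}f^ru^r_+\,d\sigma$ are treated the same way, using Cauchy--Schwarz, the bound $\|\Bf\|_{L^2(S^{n-1})}+\|f^r\|_{L^2(S^{n-1})}\le C(n)L$, the pointwise inequality $|u^r|\le|\Bu|$ on $S^{n-1}$, and Young's inequality:
\begin{equation*}
\int_{S^{n-1}}\Bf\cdot\Bu\,d\sigma+\int_{S^{n-1}}f^ru^r_+\,d\sigma\le\varepsilon\bigl(\|\Bu\|_{L^2(S^{n-1})}^2+\|u^r\|_{L^2(S^{n-1})}^2\bigr)+C_\varepsilon L^2\le 2\varepsilon E+C_\varepsilon L^2 ,
\end{equation*}
where in the last step $\|u^r\|_{L^2(S^{n-1})}^2\le\|\Bu\|_{L^2(S^{n-1})}^2\le E$ was used. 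The genuinely delicate term is the cubic one $\int_{S^{n-1}}H_+u^r_-\,d\sigma$. Since $\theta\ge2$ for every $n\ge5$ (equivalently $n^2-7n+14>0$) and $S^{n-1}$ has finite measure, $\|H_+\|_{L^2(S^{n-1})}\le C\|H_+\|_{L^\theta(S^{n-1})}$, and then Cauchy--Schwarz together with the two structural estimates gives
\begin{equation*}
\int_{S^{n-1}}H_+u^r_-\,d\sigma\le\|H_+\|_{L^2(S^{n-1})}\|u^r\|_{L^2(S^{n-1})}\le C(LN+M)\,(LN+M)^{1/2}=C(LN+M)^{3/2}\le C\bigl(L^{3/2}N^{3/2}+M^{3/2}\bigr).
\end{equation*}

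To conclude, since $n-4\ge1$ the left-hand side of~\eqref{eq:keydec-2} is at least $E$, so collecting the above bounds (the factors $n-4$ being harmless constants) yields $E\le C(n)\varepsilon E+C(n,\varepsilon)\bigl(L^2+L^{3/2}N^{3/2}+M^{3/2}\bigr)$; choosing $\varepsilon$ small enough depending only on $n$ and absorbing the $\varepsilon E$ term proves~\eqref{energyestimatenew2}. The one point requiring care — the main obstacle, such as it is — is the bookkeeping in the cubic term: its $3/2$-homogeneity is forced by pairing $H_+$, which is of degree one in the forcing and the nonlinearity, with $u^r$ in $L^2$, which is of degree one-half, and it is essential to multiply the two structural bounds before splitting, keeping $(LN+M)^{3/2}$ intact. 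Unbalancing the Cauchy--Schwarz into $\varepsilon\|u^r\|_{L^2}^2+C_\varepsilon\|H_+\|_{L^\theta}^2$ would instead produce $\|H_+\|_{L^\theta}^2\sim(LN)^2$, which carries the inadmissible power $N^2$; hence this term cannot be absorbed and must be estimated directly.
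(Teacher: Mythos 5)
Your proof is correct and follows essentially the same route as the paper: start from \eqref{eq:keydec-2}, absorb the linear terms by Young's inequality, and bound the cubic term by $\|H_+\|_{L^\theta(S^{n-1})}\|u^r\|_{L^2(S^{n-1})}$, then invoke Lemmas \ref{lem:H+control} and \ref{lem:urestim} to produce the $3/2$ power. The only cosmetic difference is that you pair $H_+$ and $u^r_-$ in $L^2\times L^2$ and use $\theta\ge 2$, whereas the paper pairs them in $L^\theta\times L^{\theta'}$ and uses $\theta'\le 2$; both are equivalent on the finite-measure sphere.
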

		
		\begin{proof}
			According to Lemma \ref{halfenergyestimate}, it holds that
			\begin{equation*}
				\int_{S^{n-1}}|\nabla \Bu|^2 + |\Bu|^2 d\sigma  
				\leq C \|\Bf\|_{L^2(S^{n-1})}\|\Bu\|_{L^2(S^{n-1})} + C \|H_{+}\|_{L^\theta(S^{n-1})}\|u^r\|_{L^2(S^{n-1})}.
			\end{equation*}
			By Young's inequality and Lemmas \ref{lem:H+control}-\ref{lem:urestim}, one has
			\begin{equation*}
				\begin{aligned}    
					& \int_{S^{n-1}}|\nabla \Bu|^2 + |\Bu|^2  d\sigma \\
					\leq &  C \|\Bf\|_{L^\infty(S^{n-1})}^2  + C   \left( \|\Bf\cdot \Bu\|_{L^q(S^{n-1})}^{\frac32} + \|\div \Bf\|_{L^q(S^{n-1})}^{\frac32}   + \|f^r\|_{L^1(S^{n-1})}^\frac32 \right).
				\end{aligned}  
			\end{equation*}
			This implies \eqref{energyestimatenew2} and completes the proof for Lemma \ref{lemmaenergyestimatenleq10}.
		\end{proof}
	}


	It is implied by Lemma \ref{lemmaenergyestimatenleq10} that one can close the energy estimates as long as one can estimate $\|\Bu\|_{L^q(S^{n-1})}$ in terms of $\|\nabla\Bu\|_{L^2(S^{n-1})}$ and $\Bf$.  This can be easily done when $n\leq 10$  as $\|\Bu\|_{L^q(S^{n-1})}$ can be controlled by $\|\nabla\Bu\|_{L^2(S^{n-1})}$ by a direct use of the Sobolev embedding inequality on $S^{n-1}$. To obtain the energy estimates up to dimension 16, a few more  estimates are needed. 
	These are based on some weighted estimates for the pressure and velocity field, which follow the same strategy as \cite[Lemmas 2.10 and 2.11]{LiYang22} with necessary adaptations to the setting of self-similar solutions. We should mention that similar weighted estimates already appeared in \cite{FrehseRuzicka94, FrehseRuzicka96}.

	{
		\begin{lemma}\label{lem:weighedestiforpressure}
			Assume $n\geq 5$. If $\Bu$ is a self-similar solution of \eqref{NS} such that $\Bu \in C^2(\mathbb{R}^n \setminus\{0\})$, 
			for any $0<s<n-4$, $y \in B_2\setminus B_{\frac12}$, $0<R_0\leq \frac{1}{8}$, we have
			\begin{equation}\label{weightedestiforpressure-0}
				\begin{aligned}
					& \int_{B_{R_0}(y)} \frac{|p(x)|}{|x-y|^{s+2}} dx \leq C\left(\|H_{+}\|_{L^{\theta}(S^{n-1})} +	 \|\nabla \Bu\|_{L^2(S^{n-1}) }^2+ \|\Bf\|_{L^{\infty}(S^{n-1})}\right),
				\end{aligned}
			\end{equation}
			where $C=C( n,s, R_0) > 0$ depends only on $n$, $s$, and $R_{0}$, but does not depend on $y$,  and the value of  $\theta$  is defined in  \eqref{eq:theta}.
		\end{lemma}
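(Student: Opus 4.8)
The plan is to estimate the weighted integral $\int_{B_{R_0}(y)} |p(x)|\,|x-y|^{-(s+2)}\,dx$ by first splitting the pressure into a Newtonian-potential part coming from the nonlinearity and the force, plus a harmonic correction, exactly as in the representation formula \eqref{pressure} used in the proof of Lemma \ref{Lemmapressure}. Since $y$ ranges over the annulus $B_2\setminus B_{1/2}$ and $R_0\le \tfrac18$, the ball $B_{R_0}(y)$ stays away from the origin, so all quantities are smooth there and the harmonic part $h$ is a constant (hence contributes nothing). I would therefore work with the local decomposition $p = p_1 + p_2$ on, say, $B_{1/4}(y)$, where $p_1$ solves $\Delta p_1 = -\partial_i\partial_j(u_iu_j) + \div\Bf$ with right-hand side localized by a cutoff supported in $B_{1/4}(y)$, and $p_2$ is harmonic on $B_{1/4}(y)$. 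For the harmonic piece $p_2$, interior estimates for harmonic functions give $\|p_2\|_{L^\infty(B_{R_0}(y))} \le C\|p\|_{L^1(B_{1/4}(y))}$, and by $(-2)$-homogeneity the latter is controlled by $\|\bar p\|_{L^1(S^{n-1})}$, which in turn is bounded via $H = \tfrac12|\Bu|^2 + p$ by $\|H_+\|_{L^\theta(S^{n-1})} + \|H_-\|_{L^1(S^{n-1})} + \|\Bu\|_{L^2(S^{n-1})}^2$, and then by Lemmas \ref{lem:H+control}--\ref{lem:urestim} and the Sobolev inequality on $S^{n-1}$ by the right-hand side of \eqref{weightedestiforpressure-0}. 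Since $s+2 < n-2$, the weight $|x-y|^{-(s+2)}$ is integrable on $B_{R_0}(y)$, so the $p_2$ contribution is fine.

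The main work is the $p_1$ piece. Writing $p_1$ via the Newtonian potential, $p_1 = \mathcal{N}*(-\partial_i\partial_j(u_iu_j\chi)) + \mathcal{N}*(\div(\Bf\chi))$ with $\chi$ the cutoff, I would integrate by parts to move derivatives onto the Newtonian kernel, so that $\nabla^2\mathcal N * (u_iu_j\chi)$ appears. The key point — following the strategy of \cite[Lemmas 2.10, 2.11]{LiYang22} — is to bound $\int_{B_{R_0}(y)} |p_1(x)|\,|x-y|^{-(s+2)}\,dx$ by a potential-type estimate: one uses that the composition of a Calderón–Zygmund-type operator with the weight $|x-y|^{-(s+2)}$ can be dominated, after Fubini, by a Riesz potential of the source $|\Bu|^2 + |\Bf|$ evaluated against a kernel of the form $|x-z|^{-n}\cdot(\text{something})$, and the fact that $s < n-4$ guarantees enough room for the potential estimate to converge. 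Concretely, I expect an estimate of the shape $\int_{B_{R_0}(y)} |p_1|\,|x-y|^{-(s+2)}\,dx \le C\int_{B_{1/4}(y)} (|\Bu|^2 + |\Bf|)\,|x-y|^{-(s+2)}\,dx$ (possibly with $s$ replaced by a slightly larger exponent still below $n-4$ for the $|\Bu|^2$ term, using that $|\Bu|^2$ is $(-2)$-homogeneous and hence locally bounded away from the origin in terms of $\|\nabla\Bu\|_{L^2(S^{n-1})}^2$ via Sobolev), and then converting the right-hand side to a spherical integral by homogeneity: $\int_{B_{1/4}(y)}|\Bu|^2|x-y|^{-(s+2)}dx \le C\|\Bu\|_{L^{2^*}(S^{n-1})}^2 \le C\|\nabla\Bu\|_{L^2(S^{n-1})}^2 + C\|\Bu\|_{L^2(S^{n-1})}^2$, and similarly $\int |\Bf|\,|x-y|^{-(s+2)}dx \le C\|\Bf\|_{L^\infty(S^{n-1})}$ since $\Bf$ is $(-3)$-homogeneous and bounded on the annulus.

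The hard part will be making the potential-type bound for $p_1$ rigorous with the correct dependence of constants on $s$, $n$, $R_0$ but \emph{uniformly in} $y \in B_2\setminus B_{1/2}$ — in particular, carefully tracking the Fubini interchange so that the double integral $\int\!\!\int |x-y|^{-(s+2)}|x-z|^{-n}\,dx$ over the relevant balls is finite, which is precisely where the hypothesis $0 < s < n-4$ enters (the other place being the Sobolev exponent $2^* = \tfrac{2(n-1)}{n-3}$ used to estimate $\|\Bu\|^2$, where one needs $s+2$ compatible with the homogeneity degree $-2$ of $|\Bu|^2$). A secondary technical point is the justification of the representation formula and the integration by parts only on the punctured space, which is handled exactly as in Lemma \ref{Lemmapressure} using the $(-2)$-homogeneity of $p$; I would reuse that argument rather than repeat it. Once these estimates are assembled and the spherical norms are bounded using Lemmas \ref{lem:H+control}, \ref{lem:urestim}, and Proposition \ref{lemmaenergyestimatenleq10}, \eqref{weightedestiforpressure-0} follows.
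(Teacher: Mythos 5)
Your plan has a genuine gap at its central step. You propose to bound $\int_{B_{R_0}(y)}|p_1|\,|x-y|^{-(s+2)}\,dx$ by $\int(|\Bu|^2+|\Bf|)\,|x-y|^{-(s+2)}\,dx$ via a Fubini/potential argument applied to the Newtonian representation, and then to control $\int|\Bu|^2|x-y|^{-(s+2)}\,dx$ by $\|\nabla\Bu\|_{L^2(S^{n-1})}^2$ through Sobolev. Neither half survives for the stated range $0<s<n-4$. First, $p_1$ is a second-order Riesz transform of $|\Bu|^2\chi$, and Calder\'on--Zygmund operators do not satisfy weighted $L^1$ bounds of the form $\int|Tg|w\leq C\int|g|w$, even for $A_1$ weights such as $|x-y|^{-(s+2)}$; the alternative of integrating by parts only once produces $|\Bu||\nabla\Bu|$ against a kernel $\sim|y-z|^{-(s+1)}$, which closes only for $s$ near $0$. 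Second, even granting the transfer of the weight, the a priori information is $|\Bu|^2\in L^{\frac{n-1}{n-3}}$ (Sobolev on $S^{n-1}$), so H\"older forces $(s+2)\cdot\frac{n-1}{2}<n$, i.e.\ $s<\frac{2}{n-1}$ --- nowhere near $s<n-4$. Your parenthetical appeal to ``$|\Bu|^2$ locally bounded away from the origin via Sobolev'' is not available: homogeneity plus $H^1(S^{n-1})$ gives $L^{\frac{2(n-1)}{n-3}}$, not $L^\infty$, when $n\geq5$.

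The paper's proof is structurally different and this difference is what makes the full range of $s$ accessible. One does not estimate $p$ from a representation formula; instead one multiplies the pressure equation $-\Delta p=\partial_iu_j\partial_ju_i-\div\Bf$ by the weight $\zeta(x)|x-y|^{-s}$ itself and integrates by parts. The resulting identity carries the weighted integrals $\int\frac{|\Bu|^2\zeta}{|x-y|^{s+2}}$, $\int\frac{p\zeta}{|x-y|^{s+2}}$ with explicit coefficients, and the hypothesis $0<s<n-4$ enters exactly through the signs $s(s+2-n)<0$ and $\frac{s(s+4-n)}{2}<0$: after substituting $p=H-\frac12|\Bu|^2$, the dangerous weighted integrals of $|\Bu|^2$ and $|H|$ appear with favorable sign and can be moved to the left-hand side. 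The only term requiring an a priori input is $\int\frac{H_+\zeta}{|x-y|^{s+2}}\,dx$, which is finite and controlled by $\|H_+\|_{L^\theta(S^{n-1})}$ precisely because $\theta>\frac n2$ and $\frac{(s+2)n}{n-2}<n$. In other words, the higher integrability of $H_+$ from Lemma \ref{lem:H+control} must be used \emph{inside} the weighted integral, not merely to bound spherical norms at the end; the remaining boundary terms live on the annulus where $\nabla\zeta$ is supported and are handled by $\|p\|_{L^{\frac{n-1}{n-3}}(S^{n-1})}\leq C\|\nabla\Bu\|_{L^2(S^{n-1})}^2$ (Lemma \ref{Riesz} plus Sobolev), which is the only place the Calder\'on--Zygmund theory is needed.
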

		
		\begin{proof}
			In the proof, we use $C$ to denote a constant that may depend on $n,s$ and $R_0$ if we do not specify its dependence clearly.
			Fix $y\in B_2\setminus B_{\frac12}$. Let  $\zeta(x)$ be a smooth cut-off function $\zeta(x)$ such that
			$$ \zeta=1 \text{ in } B_{R_{0}}(y), \quad \zeta=0 \text{ in } B_{2R_{0}}^{c}(y), \quad\text{and}\,\, |\nabla\zeta|+|\nabla^{2}\zeta|\le C(R_0). $$
			Multiplying the equation of the pressure 
			\begin{equation}\label{eq:pressure}
				-\Delta p=\partial_{i}u_{j}\partial_{j}u_{i}-\text{div}\Bf
			\end{equation}
			by $\zeta (x) |x-y|^{-s}$ and integrating over $\mathbb{R}^n$ with respect to $x$ yield
			\begin{align}\label{eq:weightedestforp}
				& s(s+2)\int_{\mathbb{R}^{n}}\frac{|\Bu\cdot(x-y)|^{2}\zeta}{|x-y|^{s+4}} dx -s\int_{\mathbb{R}^{n}}\frac{|\Bu|^{2}\zeta}{|x-y|^{s+2}} dx +s(s+2-n) \int_{\mathbb{R}^{n}}\frac{p\zeta}{|x-y|^{s+2}}dx  \nonumber \\
				&=-\int_{\mathbb{R}^{n}}p\Delta\zeta|x-y|^{-s} dx -2\int_{\mathbb{R}^{n}}p\nabla\zeta\cdot\nabla(|x-y|^{-s})dx -\int_{\mathbb{R}^{n}}\Bf \cdot\nabla(\zeta|x-y|^{-s})dx  \nonumber \\
				&\quad -2\int_{\mathbb{R}^{n}}u_{j}u_{i}\partial_{i}(|x-y|^{-s})\partial_{j}\zeta dx -\int_{\mathbb{R}^{n}}u_{j}u_{i}|x-y|^{-s}\partial_{ij}\zeta dx =:\text{RHS}
			\end{align}
			Since  $\textnormal{supp}(\nabla\zeta)\subset B_{2R_{0}}(y)\setminus B_{R_0}(y)\subset B_4\setminus B_{\frac14},$ and  $\Bu, p, \Bf$ are homogeneous,  we have
			\begin{align*}
				|\text{RHS}| & \leq C \left( \int_{B_{2R_{0}}(y)\setminus B_{R_0}(y)}|p|dx  + \int_{B_{2R_{0}}(y)\setminus B_{R_0}(y)}|\Bu|^{2}dx+\|\Bf\|_{L^{\infty}(B_{2R_{0}}(y))}\right) \\
				&\leq C\left(\|p\|_{L^{\frac{n-1}{n-3}}(S^{n-1}) }+\|\Bu\|_{L^2(S^{n-1}) }^2+\|\Bf\|_{L^{\infty}(S^{n-1})}\right) 
			\end{align*}
			By Lemmas \ref{Lemmapressure}, \ref{Riesz} and the Sobolev embedding inequality,
			\begin{equation*}
				\|p\|_{L^{\frac{n-1}{n-3}}(S^{n-1})} \leq C \|\Bu\|_{L^{\frac{2(n-1)}{n-3}}(S^{n-1})}^2 \leq C \|\nabla \Bu\|_{L^2(S^{n-1})}^2. 
			\end{equation*}
			Hence, we get that 
			\begin{align*}
				|\text{RHS}| \leq  C \left( \|\nabla \Bu\|_{L^2(S^{n-1}) }^2+\|\Bf\|_{L^{\infty}(S^{n-1})} \right).
			\end{align*}          
			The left-hand side of \eqref{eq:weightedestforp} can be written as
			\begin{equation*}
				s(s+2)\int_{\mathbb{R}^{n}}\frac{|\Bu\cdot(x-y)|^{2}\zeta}{|x-y|^{s+4}} dx +s(s+2-n)\int_{\mathbb{R}^{n}}\frac{H\zeta}{|x-y|^{s+2}} dx -\frac{s(s+4-n)}{2}\int_{\mathbb{R}^{n}}\frac{|\Bu|^{2}\zeta}{|x-y|^{s+2}} dx .
			\end{equation*}
			Since $s(s+2-n)<0$ and $\frac{s(s+4-n)}{2}<0$, there exists a positive constant $C(s)$ depending only on $s$ and $n$, such that
			\begin{equation}\label{weightedestimate}
				\begin{aligned}
					&\frac{1}{C(s)}\left(\int_{\mathbb{R}^{n}}\frac{|\Bu\cdot(x-y)|^{2}\zeta}{|x-y|^{s+4}} dx +\int_{\mathbb{R}^{n}}\frac{|\Bu|^{2}\zeta}{|x-y|^{s+2}} dx \right)  -\int_{\mathbb{R}^{n}}\frac{H\zeta}{|x-y|^{s+2}}dx
					\\
					\leq &	C\left(\|\nabla \Bu\|_{L^2(S^{n-1})}^2 + \|\Bf\|_{L^\infty(S^{n-1})}\right).
				\end{aligned}
			\end{equation}
			Replacing $H$ by $2H_{+}-|H|$ in \eqref{weightedestimate}, we have
			\begin{align*}
				&\int_{\mathbb{R}^{n}}\frac{|\Bu\cdot(x-y)|^{2}\zeta}{|x-y|^{s+4}} dx +\int_{\mathbb{R}^{n}}\frac{|\Bu|^{2}\zeta}{|x-y|^{s+2}}dx +\int_{\mathbb{R}^{n}}\frac{|H|\zeta}{|x-y|^{s+2}}dx  \\
				& \leq C \int_{\mathbb{R}^n } \frac{H_{+} \zeta}{|x-y|^{s+2} } dx + C \left( \|\nabla \Bu\|_{L^2(S^{n-1}) }^2+\|\Bf\|_{L^{\infty}(S^{n-1})} \right)\\
				&\leq C\left(\|H_{+}\|_{L^{\theta}(S^{n-1}) }+ \|\nabla \Bu\|_{L^2(S^{n-1}) }^2+\|\Bf\|_{L^{\infty}(S^{n-1})} \right),               \end{align*}
			where we used the facts $\frac{(s+2)n}{n-2}<n$ and $\theta>\frac{n}{2}$ for the last inequality.
		As $p=H-\frac{|\Bu|^{2}}{2},$ it holds that
		\begin{align*}
			\int_{\mathbb{R}^{n}}\frac{|p|\zeta}{|x-y|^{s+2}} dx &\le\int_{\mathbb{R}^{n}}\frac{|H|\zeta}{|x-y|^{s+2}} dx +\frac{1}{2}\int_{\mathbb{R}^{n}}\frac{|\Bu|^{2}\zeta}{|x-y|^{s+2}}dx  \\
			&\le C\left(\|H_{+}\|_{L^{\theta}(S^{n-1})}+	 \|\nabla \Bu\|_{L^{2}(S^{n-1})}^2+\|\Bf\|_{L^{\infty}(S^{n-1})}\right),
		\end{align*}
		which implies \eqref{weightedestiforpressure-0} and then  the proof of Lemma \ref{lem:weighedestiforpressure} is completed.
	\end{proof}
	\begin{lemma}\label{highintegralofu}
		Assume $n\geq 5$. Let $\Bu$ be a self-similar solution to \eqref{NS} such that $\Bu \in C^2(\mathbb{R}^n \setminus\{0\}).$ Then for any $1<\beta <4$,  we have
		\begin{align}\label{highintegralofu-0}
			\|\Bu\|_{L^\beta(S^{n-1})}
			&\leq C\left(\|H_{+}\|_{L^{\theta}(S^{n-1})}^{\frac{1}{2}} + \|\nabla \Bu\|_{L^2(S^{n-1})} + \|\Bf\|_{\textnormal{Lip}(S^{n-1}) }^{\frac12} \right).
		\end{align}
		where $C=C(n,\beta)>0$ depends only on $n$ and  $\beta$.
	\end{lemma}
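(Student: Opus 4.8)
The plan is to integrate the equation \eqref{eq:pressure} for the pressure (equivalently, exploit the weighted estimate of Lemma \ref{lem:weighedestiforpressure}) against a suitable singular kernel and combine it with a representation/Sobolev argument to bootstrap the $L^\beta$-integrability of $\Bu$ on the sphere. More precisely, since $\Bu$ is $(-1)$-homogeneous, $\|\Bu\|_{L^\beta(S^{n-1})}$ is comparable (up to constants depending on $n,\beta$) to a weighted $L^\beta$ norm of $\Bu$ on an annulus, say $B_2\setminus B_{1/2}$, so it suffices to control $\int_{B_2\setminus B_{1/2}} |\Bu|^\beta\, dx$. First I would pick a point $y\in B_2\setminus B_{1/2}$ and use the local representation of $\Bu$ in terms of its equation \eqref{NS}: writing the momentum equation as $-\Delta\Bu = -\nabla p - (\Bu\cdot\nabla)\Bu + \Bf = -\nabla H + (\text{lower order, using } \curl\Bu) + \Bf$, one obtains via the Newtonian potential a pointwise bound
\begin{equation*}
  |\Bu(y)| \leq C\left( \int_{B_{R_0}(y)} \frac{|H(x)| + |\Bu(x)|^2 + |\Bf(x)|}{|x-y|^{n-1}}\, dx + \|\Bu\|_{L^2(B_2\setminus B_{1/2})} \right),
\end{equation*}
where the first integral comes from the singular part and the second from the smooth far-field contribution (cut off away from $y$). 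The key input here is exactly the weighted estimate \eqref{weightedestiforpressure-0} of Lemma \ref{lem:weighedestiforpressure}, which bounds $\int_{B_{R_0}(y)} |p(x)|\,|x-y|^{-s-2}\,dx$ — and, inspecting its proof, also $\int_{B_{R_0}(y)} (|H| + |\Bu\cdot(x-y)|^2|x-y|^{-2} + |\Bu|^2)\,|x-y|^{-s-2}\,dx$ — uniformly in $y$, by the right-hand side of \eqref{highintegralofu-0}.

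The main mechanism is a Schur-type / fractional-integration estimate: raising the pointwise inequality to the power $\beta$, integrating in $y$ over $B_2\setminus B_{1/2}$, and applying Minkowski's integral inequality reduces matters to showing that convolution with the kernel $|x-y|^{-(n-1)}$ maps the relevant weighted space into $L^\beta$. Concretely, splitting $|x-y|^{-(n-1)} = |x-y|^{-(n-1)+\frac{s+2}{\beta'}}\cdot |x-y|^{-\frac{s+2}{\beta'}}$ and applying Hölder in $x$, one trades the singular weight against the weighted bounds from Lemma \ref{lem:weighedestiforpressure}, provided the exponents are chosen so that the two resulting integrals converge near $x=y$ and at the annulus scale. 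The constraint $1<\beta<4$ is precisely what makes such a choice of $s\in(0,n-4)$ possible: one needs $s+2 < n$ for local integrability of the weight, and the quadratic term $|\Bu|^2$ forces $\beta < 4$ when one wants the output exponent to beat what Lemma \ref{lem:weighedestiforpressure} provides (which controls an $L^1$-type weighted norm of $|\Bu|^2$, i.e. morally $\Bu \in L^2$ with a weight). I would then absorb $\|\Bu\|_{L^2(S^{n-1})} \leq \|\nabla\Bu\|_{L^2(S^{n-1})} + \|\Bu\|_{L^2(S^{n-1})}$, and note $\|\Bu\|_{L^2(S^{n-1})}^2 \lesssim \|\nabla\Bu\|_{L^2(S^{n-1})}^2$ is not available directly (no Poincaré on the full sphere for vector fields), but since $\beta>1$ the $L^2$ term is itself of the form appearing on the right, or can be re-fed: in fact $\|\Bu\|_{L^2(S^{n-1})}$ is already dominated by $\|\nabla\Bu\|_{L^2(S^{n-1})}$ only if we are careful, so I would instead carry $\|\Bu\|_{L^2}$ through and observe it is bounded by the right-hand side of \eqref{highintegralofu-0} directly. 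Finally the $\|\Bf\|_{\textnormal{Lip}(S^{n-1})}^{1/2}$ term collects the contributions of $\Bf$ and $\div\Bf$ (the Lipschitz norm controls $\|\div\Bf\|_{L^\infty}$), with the square root appearing because $\Bf$ enters \eqref{weightedestiforpressure-0} linearly while $|\Bu|^2$ enters linearly, so after taking the square root to pass from the weighted-$L^2$-of-$|\Bu|^2$ estimate to an estimate on $\Bu$ itself, $\Bf$ picks up the power $1/2$.

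The step I expect to be the main obstacle is making the kernel estimate quantitative and uniform in $y$ across the annulus: one must check that the exponent bookkeeping in the Hölder/Schur splitting closes simultaneously for the $H$-term (degree $1$ in the pressure-type quantity), the $|\Bu|^2$-term, and the $\Bf$-term, for a single admissible choice of $s$ depending only on $n$ and $\beta$, and for \emph{all} $\beta\in(1,4)$. This is where the dimension does not enter as a restriction (unlike the energy estimate), but the range of $\beta$ does — and one has to verify that the borderline cases $\beta\to 1^+$ and $\beta\to 4^-$ are handled, the latter being the genuinely delicate one since there $s$ must be pushed toward its extreme and the constant $C(n,\beta)$ blows up. A secondary technical point, as in Lemma \ref{lem:H+control}, is that $H$ is only $C^1$, so the pointwise representation for $\Bu$ should be justified via a weak/mollified formulation, but this is routine given the regularity already established and the machinery in Appendix \ref{app:singularinteg}.
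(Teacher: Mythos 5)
There is a genuine gap in the core mechanism. You propose to represent $\Bu$ pointwise as an order-one Riesz potential of the degree-two quantities $H$, $|\Bu|^2$, $\Bf$ (kernel $|x-y|^{1-n}$) and then run a Schur/H\"older splitting against the weighted bounds of Lemma \ref{lem:weighedestiforpressure}. But that lemma (and its proof) only controls $\int_{B_{R_0}(y)}\bigl(|H|+|\Bu|^2\bigr)|x-y|^{-(s+2)}dx$ for $0<s<n-4$, i.e.\ a Morrey-type bound $\int_{B_r(y)}(|H|+|\Bu|^2)\,dx\lesssim r^{s+2}$ with Morrey dimension $\lambda=n-(s+2)>2$. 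An order-one potential of an $L^1$-Morrey function of dimension $\lambda$ lands only in $L^{\lambda/(\lambda-1)}$, and since $\lambda>2$ this is always below $L^2$; equivalently, in your own splitting the Fubini step requires $(n-1)\beta-(s+2)(\beta-1)<n$, which with $s+2<n-2$ forces $\beta<2$. So the proposed route cannot reach the range $1<\beta<4$ needed in the lemma (recall one must take $\beta=q=\frac{(n-2)(n-1)}{4n-10}$, which is close to $4$ for $n$ near $16$), and it also produces the wrong homogeneity ($\|\Bu\|_{L^\beta}\lesssim M$ rather than $M^{1/2}$, where $M$ is the quantity in \eqref{weightedestiforpressure-0}).

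The paper avoids this by never representing $\Bu$ itself. Instead it runs a Frehse--R\r{u}\v{z}i\v{c}ka duality argument on the \emph{pressure equation}: set $\varphi$ to be the Newtonian potential of $|p|^{\frac{\beta-2}{2}}\mathrm{sgn}(p)\zeta$, so $-\Delta\varphi=|p|^{\frac{\beta-2}{2}}\mathrm{sgn}(p)\zeta$. Because $\varphi$ is an \emph{order-two} potential, H\"older against the weight $|x-z|^{-(s+2)}$ with $s=n-\frac{2\beta}{\beta-2}+\epsilon\frac{4-\beta}{\beta-2}$ yields $\|\varphi\|_{L^\infty}\lesssim M^{\frac{\beta-2}{2}}$ precisely for $\frac{2n}{n-2}<\beta<4$ (the order-two gain is what pushes the admissible range from $\beta<2$ up to $\beta<4$). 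Testing $-\Delta p=\partial_iu_j\partial_ju_i-\div\Bf$ with $\varphi\zeta$ then gives $\int|p|^{\beta/2}\zeta^2\lesssim\|\varphi\|_{L^\infty}\bigl(\|\nabla\Bu\|_{L^2(S^{n-1})}^2+\|\Bf\|_{\textnormal{Lip}(S^{n-1})}\bigr)\lesssim M^{\beta/2}$, and the elementary inequality $|\Bu|^2\le 2|p|+2H_+$ together with Lemma \ref{lem:H+control} converts this into \eqref{highintegralofu-0} with the correct square-root homogeneity; the case $\beta\le\frac{2n}{n-2}$ follows by H\"older and the sphere is handled by a finite covering, as you anticipated. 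Your identification of Lemma \ref{lem:weighedestiforpressure} as the key input and of the exponent constraint $s\in(0,n-4)$ as the source of the restriction $\beta<4$ is correct, but the duality step on $p$ is the missing idea without which the argument does not close.
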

	
	\begin{proof}  
		In the proof, we use $C$ to denote a constant that may depend on $n$ and $\beta$. For any fixed $y\in S^{n-1}$, let $R_0= \frac{1}{16}$, and $\zeta(x)$ be the  cut-off function  satisfying
		$$ \zeta=1 \text{ in } B_{\frac{R_0}{2}}(y); \quad \zeta=0 \text{ in } B_{R_{0}}^{c}(y); \quad |\nabla\zeta|+|\nabla^{2}\zeta|\le 100. $$
		For any $\frac{2n}{n-2}< \beta < 4$,  we define
		$$ \varphi(x):=\frac{1}{(2-n)n\omega_n}\int_{\mathbb{R}^{n}}\frac{1}{|x-z|^{n-2}}|p(z)|^{\frac{\beta-2}{2}}sgn(p(z))\zeta(z)dz, \quad x\in\mathbb{R}^{n}. $$
		Here $\omega_n$ is the volume of the unit ball in $\mathbb{R}^n$ and $sgn$ is the sign function. Clearly, one has
		\begin{equation}\label{laplacianofpressure}
			-\Delta\varphi=|p|^{\frac{\beta-2}{2}}sgn(p)\zeta. 
		\end{equation}
		
		Let 
		\begin{equation*}
			s:= n- \frac{2\beta}{\beta-2} + \epsilon \frac{4-\beta}{\beta-2},
		\end{equation*}
		where $\epsilon>0$ is sufficiently small such that $0<s<n-4$ and 
		\begin{equation}\label{eq:compo-n}
			\frac{2}{4-\beta}\cdot\left[-n+2+\frac{(s+2)(\beta-2)}{2}\right] = -n+\epsilon >-n.
		\end{equation}
		For every $x\in B_{R_0}(y)$, with the aid of  H\"older inequality, it holds that 
		\begin{align*}
			|\varphi(x)| & \leq C\int_{\mathbb{R}^{n}}\left(\frac{|p(z)|\zeta(z)}{|x-z|^{s+2}}  \right)^{\frac{\beta-2}{2}}|x-z|^{-n+2+\frac{(s+2)(\beta-2)}{2}}\zeta(z)^{\frac{4-\beta}{2}}dz \\
			&\le C \left(\int_{\mathbb{R}^{n}}\frac{|p(z)|\zeta(z)}{|x-z|^{s+2}} dz \right)^{\frac{\beta-2}{2}}\left(\int_{\mathbb{R}^{n}}|x-z|^{\frac{2}{4-\beta}\cdot[-n+2+\frac{(s+2)(\beta-2)}{2}]}\zeta (z) dz\right)^{\frac{4-\beta}{2}} \\
			&\le C\left(\int_{B_{2R_0}(x)}\frac{|p(z)|}{|x-z|^{s+2}}dz\right)^{\frac{\beta-2}{2}},
		\end{align*}
		where we  used  \eqref{eq:compo-n} in the last inequality. This, together with Lemma \ref{lem:weighedestiforpressure},  yields
		\begin{equation}\label{Linfinityoftestfunction}
			\|\varphi\|_{L^{\infty}(B_{R_{0}}(y))}\leq C\left(\|H_{+}\|_{L^{\theta}(S^{n-1})}+	 \|\nabla \Bu\|_{L^{2}(S^{n-1})}^2+\|\Bf\|_{L^\infty(S^{n-1})}\right)^{\frac{\beta-2}{2}}.
		\end{equation}
		Multiplying \eqref{eq:pressure} by $\varphi\zeta$ and integrating over $\mathbb{R}^n$ give
		\begin{equation*}
			\int_{\mathbb{R}^{n}}p\Delta\varphi\zeta dx +2\int_{\mathbb{R}^{n}}\varphi\nabla p\cdot\nabla\zeta dx +\int_{\mathbb{R}^{n}}p\varphi\Delta\zeta dx =\int_{B_{R_0}(y)}(\partial_{i}u_{j}\partial_{j}u_{i}-\text{div}~\Bf)\varphi\zeta dx.         	    
		\end{equation*}	
		By virtue of \eqref{laplacianofpressure} and the H\"older inequality, 
		\begin{equation}\label{newestimateforpressure}
			\begin{aligned}
				&\int_{B_{R_0}(y)}|p|^{\frac{\beta}{2}}\zeta^{2} dx \\ =&\int_{B_{R_0}(y)}(\partial_{i}u_{j}\partial_{j}u_{i}-\text{div}~\Bf)\varphi\zeta dx -2\int_{B_{R_0}(y)}\varphi\nabla p\cdot\nabla\zeta dx -\int_{B_{R_0}(y)}p\varphi\Delta\zeta dx   \\
				\le& C\|\varphi\|_{L^{\infty}(B_{R_0}(y))}\left(\|\nabla \Bu\|_{L^{2}(S^{n-1} ) }^{2}+\|\Bf\|_{\textnormal{Lip}(S^{n-1}) } + \|\nabla p\|_{L^1(S^{n-1})} + \|p\|_{L^1(S^{n-1})} \right). 
			\end{aligned}
		\end{equation}		
		It follows from  the Sobolev embedding inequality that one has 
		\begin{equation}\label{gradientofp}
			\begin{aligned}
				\|\nabla p\|_{L^{1}(S^{n-1})} 	&\leq 	\|\nabla p\|_{L^{\frac{n-1}{n-2}}(S^{n-1})}\\ 
				&\leq C(n) \|\Bu\cdot \nabla \Bu\|_{L^{\frac{n-1}{n-2}}(S^{n-1})} + C(n) \| \Bf\|_{L^{\frac{n-1}{n-2}}(S^{n-1})}\\
				&\leq C(n)\|\Bu\|_{L^{\frac{2(n-1)}{n-3}}(S^{n-1})}\| \nabla \Bu\|_{L^{2}(S^{n-1})} + C(n) \| \Bf\|_{L^{\frac{n-1}{n-2}}(S^{n-1})} \\
				&\leq C(n)\| \nabla \Bu\|_{L^{2}(S^{n-1})}^2+ C(n) \| \Bf\|_{L^{\frac{n-1}{n-2}}(S^{n-1})}.
			\end{aligned}
		\end{equation}
		This, together with \eqref{newestimateforpressure}, gives
		\begin{equation}\label{estimatep1}
			\int_{B_{R_0}(y)}|p|^{\frac{\beta}{2}}\zeta^{2} dx
			\le C \|\varphi\|_{L^{\infty}(B_{R_0}(y))}\left(\|\nabla \Bu\|_{L^{2}(S^{n-1} ) }^{2}+\|\Bf\|_{\textnormal{Lip}(S^{n-1})} \right).
		\end{equation}	
		Taking \eqref{Linfinityoftestfunction} into \eqref{estimatep1}, by Young's inequality, we have 
		\begin{equation}\label{newestimateforpressure1}
			\int_{B_{\frac12 R_0}(y)} |p|^{\frac{\beta}{2}} dx \leq C \left(\|H_{+}\|_{L^\theta(S^{n-1})}^{\frac{\beta}{2}} + \|\nabla \Bu\|_{L^2(S^{n-1})}^{\beta} + \|\Bf\|_{\textnormal{Lip}(S^{n-1})}^{\frac{\beta}{2}} \right). 
		\end{equation}	
		As  $|\Bu|^{2}\le 2|p|+2H_{+}$, owing to \eqref{newestimateforpressure1} and \eqref{eq:secestforH}, it holds that 
		\begin{equation}\label{eq:kkey2}
			\begin{aligned}
				\int_{B_{\frac12 R_0}(y)}|\Bu|^{\beta} dx &\leq C\left(\int_{B_{\frac12 R_0}(y)}|p|^{\frac{\beta}{2}} dx +\int_{B_{\frac12 R_0}(y)}H_{+}^{\frac{\beta}{2}} dx \right) \\
				&\leq C \left( \|H_{+}\|_{L^{\theta}(S^{n-1})}^\frac{\beta}{2} + 
				\|\nabla \Bu\|_{L^2(S^{n-1})}^{\beta} + \|\Bf\|_{\textnormal{Lip}(S^{n-1})}^{\frac{\beta}{2}}\right).
			\end{aligned}          
		\end{equation}

		Now let $\left\{B_{\frac{1}{32}}(y_i): \, y_i 
		\in S^{n-1}\right\}_{i=1}^{m}, $ be a finite collection of balls which covers $S^{n-1}$. Summing \eqref{eq:kkey2} over $\left\{B_{\frac{1}{32}}(y_i)\right\}$, we get \eqref{highintegralofu-0} if $\frac{2n}{n-2}< \beta< 4$. This, together with H\"older inequality, gives \eqref{highintegralofu-0} in the case $\beta \in(1,  \frac{2n}{n-2}]$. Hence the proof of Lemma \ref{highintegralofu} is completed.
	\end{proof}

	We are in a position to get the major energy estimates.     
	\begin{pro}
		[Energy estimates ]\label{Energy estimates2}
		Assume $5\leq n\leq 16$. Let
		$\Bu$ be a  self-similar solution to \eqref{NS} such that $\Bu \in C^2(\mathbb{R}^n \setminus\{0\})$. Then there exists a constant $C$ depending only on the dimension $n$ such that
		\begin{equation}\label{eq:energyes3}
			\begin{aligned}
				\int_{S^{n-1}} |\nabla \Bu|^2 +(n-4)|\Bu|^2  d \sigma &\leq C \|\Bf\|_{\textnormal{Lip}(S^{n-1})}^2 + C \|\Bf\|_{\textnormal{Lip}(S^{n-1})}^{8},
			\end{aligned}
		\end{equation}
		and
		\begin{equation}\label{eq:energyes4}
			\begin{aligned}
				\| p\|_{L^{\frac{n-1}{n-3}}(S^{n-1})} + \|\nabla p\|_{L^{\frac{n-1}{n-2}}(S^{n-1})} 
				&\leq C \|\Bf\|_{\textnormal{Lip}(S^{n-1})} + C \|\Bf\|_{\textnormal{Lip}(S^{n-1})}^8.
			\end{aligned}
		\end{equation}
	\end{pro}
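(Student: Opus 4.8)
The plan is to bound the entire right-hand side of the energy identity \eqref{eq:keydec-2} by $\|\nabla\Bu\|_{L^2(S^{n-1})}$ together with $\|\Bf\|_{\textnormal{Lip}(S^{n-1})}$, and then absorb. The only place the dimension enters is that we must apply Lemma~\ref{highintegralofu} to the exponent $\beta=q$, where $q=\frac{(n-2)(n-1)}{4n-10}$ is the exponent in \eqref{eq:theta}: a one-line computation gives $q>1\iff(n-3)(n-4)>0$ and $q<4\iff n^{2}-19n+42<0$, so $1<q<4$ exactly when $5\le n\le16$. This is the structural bottleneck of the whole proposition; granted it, the rest is two successive applications of Young's inequality.

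\emph{Step 1: reducing $\|\Bu\|_{L^q(S^{n-1})}$ to $\|\nabla\Bu\|_{L^2(S^{n-1})}$.} Since $\Bf$ is $(-3)$-homogeneous and locally Lipschitz we have $\|\Bf\cdot\Bu\|_{L^q(S^{n-1})}\le\|\Bf\|_{L^\infty(S^{n-1})}\|\Bu\|_{L^q(S^{n-1})}$ and $\|\div\Bf\|_{L^q(S^{n-1})}+\|f^r\|_{L^1(S^{n-1})}\le C\|\Bf\|_{\textnormal{Lip}(S^{n-1})}$. Plugging these into Lemma~\ref{lem:H+control} gives $\|H_+\|_{L^\theta(S^{n-1})}\le C\|\Bf\|_{L^\infty(S^{n-1})}\|\Bu\|_{L^q(S^{n-1})}+C\|\Bf\|_{\textnormal{Lip}(S^{n-1})}$; inserting this bound into Lemma~\ref{highintegralofu} with $\beta=q$ and using $ab\le\tfrac12 a^{2}+\tfrac12 b^{2}$ to absorb the term $\big(\|\Bf\|_{L^\infty}\|\Bu\|_{L^q}\big)^{1/2}$ on the left, one obtains
\[
\|\Bu\|_{L^q(S^{n-1})}\le C\|\nabla\Bu\|_{L^2(S^{n-1})}+C\|\Bf\|_{\textnormal{Lip}(S^{n-1})}+C\|\Bf\|_{\textnormal{Lip}(S^{n-1})}^{1/2}.
\]

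\emph{Step 2: closing the energy estimate.} Substitute the last bound into Proposition~\ref{lemmaenergyestimatenleq10}. Every term is immediately of the desired form except $\|\Bf\|_{L^\infty(S^{n-1})}^{3/2}\|\Bu\|_{L^q(S^{n-1})}^{3/2}$, which after substitution becomes $C\|\Bf\|_{\textnormal{Lip}(S^{n-1})}^{3/2}\|\nabla\Bu\|_{L^2(S^{n-1})}^{3/2}$ plus lower powers of $\|\Bf\|_{\textnormal{Lip}(S^{n-1})}$. Young's inequality with exponents $(\tfrac43,4)$ gives $\|\Bf\|_{\textnormal{Lip}}^{3/2}\|\nabla\Bu\|_{L^2}^{3/2}\le\tfrac12\|\nabla\Bu\|_{L^2(S^{n-1})}^{2}+C\|\Bf\|_{\textnormal{Lip}}^{6}$, and since $\int_{S^{n-1}}|\nabla\Bu|^{2}\,d\sigma\ge\int_{S^{n-1}}|\Bu|^{2}\,d\sigma$ by \eqref{eq:Sobolev}, the factor $\tfrac12\|\nabla\Bu\|_{L^2}^{2}$ is absorbed into the left-hand side of the inequality in Proposition~\ref{lemmaenergyestimatenleq10}. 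Collecting the finitely many surviving powers $\|\Bf\|_{\textnormal{Lip}(S^{n-1})}^{a}$, bounding them by the envelope $C\|\Bf\|_{\textnormal{Lip}(S^{n-1})}^{2}+C\|\Bf\|_{\textnormal{Lip}(S^{n-1})}^{8}$, and using $\int|\nabla\Bu|^{2}+(n-4)|\Bu|^{2}\le(n-4)\int|\nabla\Bu|^{2}+|\Bu|^{2}$ for $n\ge5$, yields \eqref{eq:energyes3}.

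\emph{Step 3: the pressure estimates.} Finally, \eqref{eq:energyes4} follows from \eqref{eq:energyes3} and the equation $-\Delta p=\partial_{i}u_{j}\partial_{j}u_{i}-\div\Bf$: from the representation \eqref{pressure}, Lemma~\ref{Lemmapressure}, Lemma~\ref{Riesz}, and the Sobolev embedding $W^{1,2}(S^{n-1})\hookrightarrow L^{\frac{2(n-1)}{n-3}}(S^{n-1})$ one gets, as in the proof of Lemma~\ref{lem:weighedestiforpressure} (the contribution of $\div\Bf$ to $p$ being lower order and bounded by $C\|\Bf\|_{\textnormal{Lip}(S^{n-1})}$), $\|p\|_{L^{\frac{n-1}{n-3}}(S^{n-1})}\le C\|\Bu\|_{L^{\frac{2(n-1)}{n-3}}(S^{n-1})}^{2}+C\|\Bf\|_{\textnormal{Lip}(S^{n-1})}\le C\|\nabla\Bu\|_{L^2(S^{n-1})}^{2}+C\|\Bf\|_{\textnormal{Lip}(S^{n-1})}$, while \eqref{gradientofp} already gives $\|\nabla p\|_{L^{\frac{n-1}{n-2}}(S^{n-1})}\le C\|\nabla\Bu\|_{L^2(S^{n-1})}^{2}+C\|\Bf\|_{\textnormal{Lip}(S^{n-1})}$. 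Substituting \eqref{eq:energyes3} into the right-hand sides and comparing monomials gives \eqref{eq:energyes4}. I expect the only real difficulty to be the point isolated in the first paragraph — the whole estimate closes precisely because $q<4$, i.e.\ $n\le16$; the remaining work is the routine bookkeeping of the powers of $\|\Bf\|_{\textnormal{Lip}(S^{n-1})}$ through the two Young steps.
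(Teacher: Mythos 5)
Your overall strategy is the paper's: the choice $\beta=q$ in Lemma~\ref{highintegralofu}, the verification that $1<q<4$ exactly when $5\le n\le 16$, the absorption of $\|\Bf\|_{L^\infty}^{1/2}\|\Bu\|_{L^q}^{1/2}$ into the left-hand side, and the pressure estimates via \eqref{gradientofp} and Lemma~\ref{Riesz} all match the actual proof. Step~1 and Step~3 are correct as written.

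There is, however, one step in Step~2 that fails as stated: the final ``envelope'' claim. Proposition~\ref{lemmaenergyestimatenleq10} carries the standalone additive term $C\|\Bf\|_{\textnormal{Lip}(S^{n-1})}^{3/2}$ on its right-hand side (it comes from $\|f^r\|_{L^1}^{3/2}$ and $\|\div\Bf\|_{L^q}^{3/2}$ after the Young step inside that proposition), and this term survives untouched through your substitution and absorption. But $t^{3/2}$ is \emph{not} bounded by $C(t^{2}+t^{8})$: as $t\to0^{+}$ the ratio $t^{3/2}/(t^{2}+t^{8})\sim t^{-1/2}\to\infty$, so no constant $C$ works. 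Consequently any argument that takes Proposition~\ref{lemmaenergyestimatenleq10} as its starting point can at best conclude $\int_{S^{n-1}}|\nabla\Bu|^2+(n-4)|\Bu|^2\,d\sigma\le C\|\Bf\|_{\textnormal{Lip}}^{3/2}+C\|\Bf\|_{\textnormal{Lip}}^{8}$, which is strictly weaker than \eqref{eq:energyes3} for small forces. (This weaker bound still suffices for the existence and uniqueness applications, and it does imply \eqref{eq:energyes4} since $t^{3/2}\le t+t^{8}$, so the damage is confined to the precise exponent in \eqref{eq:energyes3}.)

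The fix is to reorder the operations as the paper does: instead of feeding your Step~1 bound into Proposition~\ref{lemmaenergyestimatenleq10}, first eliminate $\|\nabla\Bu\|_{L^2}$ from the $\|\Bu\|_{L^q}$ estimate by combining \eqref{eq:keydec-2} with Lemmas~\ref{lem:H+control} and \ref{lem:urestim} (this gives $\|\Bu\|_{L^q}\le C\|\Bf\|_{\textnormal{Lip}}^{1/2}+C\|\Bf\|_{\textnormal{Lip}}^{3}$), deduce $\|H_+\|_{L^\theta}\le C\|\Bf\|_{\textnormal{Lip}}+C\|\Bf\|_{\textnormal{Lip}}^{4}$ from Lemma~\ref{lem:H+control}, and only then return to the raw energy inequality of Lemma~\ref{halfenergyestimate}. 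There every force-dependent quantity enters with exponent at least one before being squared, so the collected powers all lie in $[2,8]$ and the envelope $C\|\Bf\|_{\textnormal{Lip}}^{2}+C\|\Bf\|_{\textnormal{Lip}}^{8}$ is legitimate.
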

	\begin{proof}
		Choose $\beta=q=\frac{(n-2)(n-1)}{4n-10}$ so that $1< q < 4$, as $5\leq n \leq 16$. It follows from Lemma \ref{highintegralofu} that we have
		\begin{equation}\label{eq:kkey1}
			\begin{aligned}
				\|\Bu\|_{L^q(S^{n-1})}&\leq 
				C\left(\|H_{+}\|_{L^{\theta}(S^{n-1})}^{\frac12} +	 \|\nabla \Bu\|_{L^2(S^{n-1})} + \|\Bf\|_{L^{\infty}(S^{n-1})}\right). 
			\end{aligned}
		\end{equation}
		It follows from  \eqref{eq:keydec-2} and the H\"older inequality that 
		\begin{equation}\label{eq:kkey1-1}
			\|\nabla \Bu\|_{L^2(S^{n-1})}^2 \leq C \|\Bf\|_{L^2(S^{n-1})}^2 + C \int_{S^{n-1}} H_{+} u^r_{-}dx. 
		\end{equation}
		Taking \eqref{eq:kkey1-1} into \eqref{eq:kkey1}, we have 
		\begin{equation}
			\begin{aligned}
				\|\Bu\|_{L^q(S^{n-1})}  &\leq C \left( 
				\|H_{+}\|_{L^\theta(S^{n-1})}^{\frac12} + \|H_{+}\|_{L^\theta(S^{n-1})}^{\frac12} \|u^r\|_{L^{\theta'}(S^{n-1})}^{\frac12} + \|\Bf\|_{L^\infty(S^{n-1})} \right)\\
				& \leq C \left( 
				\|H_{+}\|_{L^\theta(S^{n-1})}^{\frac12} + \|H_{+}\|_{L^\theta(S^{n-1})}^{\frac12} \|u^r\|_{L^{2}(S^{n-1})}^{\frac12} + \|\Bf\|_{L^\infty(S^{n-1})} \right),
			\end{aligned}
		\end{equation}
		where we used the fact that  $\theta > \frac{n}{2}$ and $\theta'< 2$.  This, together with Lemmas \ref{lem:H+control} and \ref{lem:urestim}, gives
		\begin{equation*}
			\begin{aligned}
				&  \|\Bu\|_{L^q(S^{n-1})} \\
				\leq & C\left( \|\Bf\cdot \Bu\|_{L^q(S^{n-1})}^\frac12 + \|\div \Bf\|_{L^q(S^{n-1})}^\frac12\right)\cdot \left( 1+ \|u^r\|_{L^2(S^{n-1})}^{\frac12}\right) + C \|\Bf\|_{L^\infty(S^{n-1})}  \\ 
				\leq  & C \left(\|\Bf\cdot \Bu\|_{L^q(S^{n-1})}^{\frac12} +  \| \Bf\|_{\textnormal{Lip}(S^{n-1})}^\frac12 \right)\cdot \left(1+  \|\Bf\cdot \Bu\|_{L^q(S^{n-1})}^{\frac14} +  \|\Bf\|_{\textnormal{Lip}(S^{n-1})}^\frac14\right)\\
				& + C \|\Bf\|_{L^\infty(S^{n-1})}\\
				\leq & C \left(\|\Bf\cdot \Bu\|_{L^q(S^{n-1})}^{\frac12}+ \|\Bf\|_{\textnormal{Lip}(S^{n-1})}^{\frac12} +\|\Bf\cdot \Bu\|_{L^q(S^{n-1})}^{\frac34}+ \|\Bf\|_{\textnormal{Lip}(S^{n-1})}^{\frac34} \right)+ C \|\Bf\|_{L^\infty(S^{n-1})}.
			\end{aligned}
		\end{equation*}
		Applying Young's inequality yields 
		\begin{equation}\label{highintegralofu-3}
			\begin{aligned}       
				\|\Bu\|_{L^q(S^{n-1})} \leq 
				C \|\Bf\|_{\textnormal{Lip}(S^{n-1})}^{\frac12}+ C \|\Bf\|_{\textnormal{Lip}(S^{n-1})}^3. 
			\end{aligned}
		\end{equation}
		Taking \eqref{highintegralofu-3} into Lemma \ref{lem:H+control} gives
		\begin{equation}\label{integralofH}
			\|H_{+}\|_{L^\theta(S^{n-1})} \leq C \|\Bf\|_{\textnormal{Lip}(S^{n-1})} + C \|\Bf\|_{\textnormal{Lip}(S^{n-1})}^{4}. 
		\end{equation}
		It follows from \eqref{integralofH}, Lemma \ref{halfenergyestimate} and the H\"older inequality that we obtain
		\begin{equation}\label{energyestimatefinal}
			\begin{aligned}
				\|\nabla \Bu\|_{L^2(S^{n-1})}^2 + \|\Bu\|_{L^2(S^{n-1})}^2 
				&\leq C \|\Bf\|_{L^2(S^{n-1})}^2 + \|H_{+}\|_{L^\theta(S^{n-1})}^2 \\
				& \leq C \|\Bf\|_{\textnormal{Lip}(S^{n-1})}^2 + C \|\Bf\|_{\textnormal{Lip}(S^{n-1})}^{8}.
			\end{aligned}
		\end{equation}
		Moreover, by \eqref{gradientofp}, it holds that
		\begin{equation*}
			\begin{aligned}
				\|\nabla p\|_{L^{\frac{n-1}{n-2}}(S^{n-1})} 
				\leq C \| \nabla \Bu\|_{L^{2}(S^{n-1})}^2+ C \| \Bf\|_{L^{\frac{n-1}{n-2}}(S^{n-1})} \leq C \|\Bf\|_{\textnormal{Lip}(S^{n-1})} + C\|\Bf\|_{\textnormal{Lip}(S^{n-1})}^{8}.            
			\end{aligned}
		\end{equation*}
		Then \eqref{eq:energyes4} follows from the Sobolev embedding inequality. The proof of Proposition \ref{Energy estimates2} is completed.
	\end{proof}

}

In the course of proving Proposition \ref{Energy estimates2}, we have already obtained the following proposition.

\begin{pro}[Higher integrability of $H_+$] \label{higherint2} 
	Assume $5\leq n \leq 16$. Let $\Bu$ is a self-simialr solution of \eqref{NS} such that $\Bu \in C^2(\mathbb{R}^n \setminus\{0\})$. Then 
	it holds that
	\begin{equation*}
		\|H_{+}\|_{L^\theta(S^{n-1})} \leq C \left(\|\Bf\|_{\textnormal{Lip}(S^{n-1})} + \|\Bf\|_{\textnormal{Lip}(S^{n-1})}^{4}\right),
	\end{equation*}
	for some constant $C$ depending only on $n$.   
\end{pro}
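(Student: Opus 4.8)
The plan is to recognize that this bound is precisely the intermediate estimate \eqref{integralofH} that was extracted along the way in the proof of Proposition~\ref{Energy estimates2}, so I would reassemble that argument as a self-contained chain. The first step is to fix $\beta = q = \frac{(n-2)(n-1)}{4n-10}$; the hypothesis $5 \le n \le 16$ is used here exactly to guarantee $1 < q < 4$, which is the admissible range in Lemma~\ref{highintegralofu}. Applying that lemma with this choice of $\beta$ gives
\[
\|\Bu\|_{L^q(S^{n-1})} \le C\left(\|H_{+}\|_{L^{\theta}(S^{n-1})}^{1/2} + \|\nabla \Bu\|_{L^2(S^{n-1})} + \|\Bf\|_{\textnormal{Lip}(S^{n-1})}^{1/2}\right).
\]

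Next I would control $\|\nabla \Bu\|_{L^2(S^{n-1})}^2$ using the half-energy estimate \eqref{eq:keydec-2} of Lemma~\ref{halfenergyestimate} together with Hölder's inequality (using $\theta > n/2 \ge 2$, so the conjugate exponent of $\theta$ is below $2$), obtaining $\|\nabla \Bu\|_{L^2(S^{n-1})}^2 \le C\|\Bf\|_{L^2(S^{n-1})}^2 + C\|H_{+}\|_{L^{\theta}(S^{n-1})}\|u^r\|_{L^2(S^{n-1})}$. Substituting this back, then invoking Lemma~\ref{lem:H+control} to rewrite $\|H_{+}\|_{L^{\theta}(S^{n-1})}$ in terms of $\|\Bf\cdot\Bu\|_{L^q(S^{n-1})}$ and $\|\div\Bf\|_{L^q(S^{n-1})}$, and Lemma~\ref{lem:urestim} to bound $\|u^r\|_{L^2(S^{n-1})}$ by the same quantities plus $\|f^r\|_{L^1(S^{n-1})}$, I arrive at an inequality whose right-hand side involves only $\|\Bf\cdot\Bu\|_{L^q(S^{n-1})} \le \|\Bf\|_{L^\infty(S^{n-1})}\|\Bu\|_{L^q(S^{n-1})}$ and Lipschitz norms of $\Bf$. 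A Young's-inequality absorption of the $\|\Bu\|_{L^q(S^{n-1})}$ factor then yields the polynomial bound
\[
\|\Bu\|_{L^q(S^{n-1})} \le C\|\Bf\|_{\textnormal{Lip}(S^{n-1})}^{1/2} + C\|\Bf\|_{\textnormal{Lip}(S^{n-1})}^{3},
\]
which is \eqref{highintegralofu-3}.

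Finally I would feed this bound back into Lemma~\ref{lem:H+control}, using once more $\|\Bf\cdot\Bu\|_{L^q(S^{n-1})} \le \|\Bf\|_{L^\infty(S^{n-1})}\|\Bu\|_{L^q(S^{n-1})}$ and $\|\div\Bf\|_{L^q(S^{n-1})} \le C\|\Bf\|_{\textnormal{Lip}(S^{n-1})}$, which gives $\|H_{+}\|_{L^{\theta}(S^{n-1})} \le C\|\Bf\|_{\textnormal{Lip}(S^{n-1})} + C\|\Bf\|_{\textnormal{Lip}(S^{n-1})}^{4}$, the claimed estimate. The only delicate point is the exponent bookkeeping in the Young's-inequality step and verifying that the self-referential appearances of $\|H_{+}\|_{L^{\theta}}$ and $\|\Bu\|_{L^q}$ on both sides are genuinely absorbable; this works because Lemma~\ref{highintegralofu} produces only the sublinear power $\|H_{+}\|_{L^{\theta}}^{1/2}$, and because $q < 4$ keeps every interpolation exponent in the admissible range — which is exactly where the restriction $n \le 16$ is essential.
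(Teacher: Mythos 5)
Your proposal is correct and follows exactly the paper's route: the paper proves this proposition simply by noting it is the intermediate estimate \eqref{integralofH} obtained inside the proof of Proposition \ref{Energy estimates2}, via the same chain (Lemma \ref{highintegralofu} with $\beta=q$, the half-energy estimate \eqref{eq:keydec-2}, Lemmas \ref{lem:H+control} and \ref{lem:urestim}, Young's inequality to reach \eqref{highintegralofu-3}, then substitution back into Lemma \ref{lem:H+control}). Your identification of where $5\leq n\leq 16$ enters (ensuring $1<q<4$) and of why the self-referential terms absorb (the sublinear power $\|H_+\|_{L^\theta}^{1/2}$) matches the paper's reasoning.
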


\section{Proof of the main theorem} \label{sec:proof of main}
In this section, we prove  Theorem \ref{thm:main}.
We first recall the following well-known regularity criteria of the weak solution in the literature, where $H_+\in L_{loc}^{\gamma}$ for some $\gamma>\frac{n}{2}$.
\begin{pro}[\hspace{-0.02cm}{\cite[Proposition 2.3]{LiYang22}}]\label{lem:keylemma}
	For $n\geq 2$, $\gamma >\frac{n}{2}$, and $\Bf\in L^{\infty}(B_1)$, let $(\Bv, p)$ be a weak solution to the steady Navier–Stokes equations
	\begin{align} \label{NS1}
		-\Delta \Bv + (\Bv\cdot \nabla )\Bv + \nabla p=\boldsymbol {f}, \quad \div \Bv=0 \textrm{ in } B_1\subset \mathbb{R}^n.
	\end{align}
	Assume
	\begin{equation*}
		\|\Bv\|_{H^1(B_1)} +\|p\|_{W^{1,\frac{n}{n-1}}(B_1)} + \|\Bf\|_{L^{\infty}(B_1)} + \|H_+\|_{L^{\gamma}(B_1)} \leq C_0.
	\end{equation*}
	Then there exists a constant $C>0$ depending on $n$, $C_0$, and  $\gamma -\frac{ n}{2}$ such that
	\begin{equation*}
		\|\Bv\|_{L^{\infty}(B_{\frac{1}{2}})} + 	\|\nabla\Bv\|_{L^{\infty}(B_{\frac{1}{2}})} \leq C.
	\end{equation*}
\end{pro}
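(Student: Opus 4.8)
The plan is to treat this as a nonlinear regularity bootstrap organised around the \emph{total head pressure} $H=\tfrac12|\Bv|^2+p$, in the spirit of \cite[Proposition 2.3]{LiYang22}. The starting point is the elliptic identity
\begin{equation*}
	-\Delta H+\Bv\cdot\na H=-\tfrac12\sum_{i,j}|\pa_i v_j-\pa_j v_i|^2+\Bf\cdot\Bv-\div\Bf\qquad\text{in }B_1,
\end{equation*}
obtained by dotting \eqref{NS1} with $\Bv$ and using $\div\Bv=0$ (this is \eqref{eq:headpre2} for a general solution). The sign of the quadratic term makes $H$ a \emph{weak subsolution} of a drift--diffusion operator with divergence-free drift $\Bv$ and right-hand side $g=\Bf\cdot\Bv-\div\Bf$, whose size is controlled by $\|\Bf\|_{L^\infty(B_1)}$ and $\|\Bv\|_{H^1(B_1)}$.

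First I would establish a Caccioppoli-type inequality for $H_+$: testing the weak form of the $H$-equation against $H_+^{\beta}\zeta^2$ ($\beta\ge1$, $\zeta$ a cutoff), using $\div\Bv=0$ to integrate the drift term by parts so that only terms carrying $\Bv\cdot\na\zeta$ survive, and integrating by parts once more in the $\div\Bf$ term, one obtains an inequality of the schematic form $\int|\na(H_+^{(\beta+1)/2}\zeta)|^2\le C(\beta)\big(\int_{\supp\na\zeta}(|\na\zeta|^2+|\Bv|\,|\na\zeta|)H_+^{\beta+1}+\int(|\Bv|+|\Bf|)(\text{l.o.t.})\big)$. Coupled with the Sobolev inequality on $B_1$ this drives a Moser iteration, which I would run \emph{in tandem} with a Calder\'on--Zygmund bootstrap for the pressure: from $-\Delta p=\pa_i\pa_j(v_iv_j)-\div\Bf$ one gets $\|p\|_{L^s}\lesssim\||\Bv|^2\|_{L^s}+\|\Bf\|_{L^\infty}+(\text{lower } L^{s_0}\text{-norm of }p)$ on shrinking balls, together with the pointwise bound $|\Bv|^2\le 2H_++2|p|$. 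Since $\Bv\in H^1\hookrightarrow L^{2n/(n-2)}$ and $\gamma$ is \emph{strictly} above $n/2$, each pass improves the exponent of $H_+$ (hence of $|\Bv|^2$, hence of $p$) by a fixed factor, so after finitely many steps $H_+\in L^\infty_{loc}(B_1)$ and $\Bv,p\in L^s_{loc}(B_1)$ for every $s<\infty$, with constants depending only on $n$, $C_0$ and $\gamma-\tfrac n2$.

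With $v_iv_j\in L^s_{loc}$ for all $s$, the last step is linear theory: rewrite the momentum equation as a Stokes system, $-\Delta\Bv+\na p=\Bf-\div(\Bv\otimes\Bv)$, whose right-hand side lies in $W^{-1,s}_{loc}$ for every $s$; interior $W^{1,s}$-estimates for the Stokes system give $\Bv\in W^{1,s}_{loc}$, hence $\Bv\in C^{0,\al}_{loc}$ and in particular $\|\Bv\|_{L^\infty(B_{1/2})}\le C$, and a further application of interior Schauder estimates for the Stokes system upgrades this to $\Bv\in C^{1,\al}_{loc}$, so that $\|\na\Bv\|_{L^\infty(B_{1/2})}\le C$. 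Tracking constants through the finitely many steps yields the stated dependence.

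The main obstacle is the tandem iteration: because the drift $\Bv$ is only in $L^{2n/(n-2)}$ to begin with, the terms $\int|\Bv|\,|\na\zeta|\,H_+^{\beta+1}$ and $\int\Bf\cdot\Bv\,H_+^{\beta}$ in the Caccioppoli inequality are not automatically controlled, and one must interleave the gain of integrability of $H_+$ with that of $\Bv$ (through the pressure equation) so that these ``bad'' terms remain subcritical at every stage — this is precisely where the strict inequality $\gamma>n/2$ is used and where the quantitative dependence on $\gamma-n/2$ originates. A secondary, purely technical point is justifying the manipulations with $H_+^{\beta}\zeta^2$ rigorously, since a priori only $H\in W^{1,1}_{loc}$; this is handled by a preliminary mild improvement of the integrability of $\na p$ together with a standard truncation and approximation.
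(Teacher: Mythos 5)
First, a point of reference: the paper does not prove this proposition at all --- it is quoted verbatim as \cite[Proposition 2.3]{LiYang22}, so there is no internal proof to compare against; your attempt has to be judged against the argument in Frehse--R{\r u}\v{z}i\v{c}ka/Li--Yang. Your skeleton is the right one (the subsolution inequality for $H$, discarding the vorticity term, the pointwise bound $|\Bv|^2\le 2H_++2|p|$, the Calder\'on--Zygmund estimate for $-\Delta p=\partial_i\partial_j(v_iv_j)-\div\Bf$, and the final Stokes/Serrin bootstrap once $\Bv\in L^s_{loc}$ for some $s>n$). The final linear step is fine.

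The genuine gap is the central ``tandem iteration'', for two concrete reasons. (1) The Moser step does not gain. In the Caccioppoli inequality the drift contribution is $\frac{1}{\beta+1}\int H_+^{\beta+1}\,\Bv\cdot\nabla(\zeta^2)$, and with only $\Bv\in L^{2n/(n-2)}$ you must take $H_+^{\beta+1}\in L^{2n/(n+2)}$, i.e.\ $\beta+1\le \gamma\frac{n+2}{2n}$. The output $H_+\in L^{(\beta+1)n/(n-2)}_{loc}$ then exceeds $L^{n/2}$ only when $n<6$; for $n\ge 6$ one round of the iteration \emph{loses} integrability relative to the hypothesis, so there is no ``fixed factor'' of improvement to iterate. (2) The transfer $H_+\Rightarrow|\Bv|^2\Rightarrow p$ is circular: $|\Bv|^2\le 2H_++2|p|$ gives $\||\Bv|^2\|_{L^s}\le 2\|H_+\|_{L^s}+2\|p\|_{L^s}$, while Calder\'on--Zygmund gives $\|p\|_{L^s}\le C\||\Bv|^2\|_{L^s}+\dots$ with a scale-invariant constant $C$ that is not small, so nothing can be absorbed and no new integrability for $\Bv$ or $p$ is ever produced. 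The missing idea --- and the actual engine of \cite[Proposition 2.3]{LiYang22} --- is the Frehse--R{\r u}\v{z}i\v{c}ka weighted (Green-function) testing of the pressure equation: pairing $-\Delta p=\partial_i\partial_j(v_iv_j)-\div\Bf$ with $\zeta|x-y|^{-s}$, the terms in $p$ and $|\Bv|^2$ recombine into $H$, whose negative part and the weighted integrals of $|p|$, $|\Bv|^2$ land on the \emph{good} side of the inequality, leaving only $\int H_+|x-y|^{-s-2}$ on the right; this is finite by H\"older precisely because $\gamma>\frac n2$ makes $|x-y|^{-(n-2)}\in L^{\gamma'}_{loc}$. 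That one-shot sign argument, not an exponent iteration, is what breaks the circularity (it is mirrored in the present paper's Lemmas \ref{lem:weighedestiforpressure} and \ref{highintegralofu}), and it is also where the quantitative dependence on $\gamma-\frac n2$ genuinely originates. As written, your proposal names the obstacle but does not supply the mechanism that overcomes it.
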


Now we can establish the following a priori estimate, which is the main estimate that enables us to apply the Leray-Schauder theory to obtain a regular self-similar solution.

\begin{pro}\label{thm:mainest}
	Let  $\Bf$ be a  $(-3)$-homogeneous external force that   such that  $\Bf$ is locally Lipschitz on $\mathbb{R}^n \setminus\{0\}$, $n\geq 5$.
	If $\Bu\in C^2(\mathbb{R}^n \setminus\{0\})$ is a self-similar solution to the steady Navier–Stokes equations \eqref{NS}, 
	then for any $\alpha \in (0,1),$ it holds that
	\begin{equation}\label{eq:C2estimates}
		\|\Bu\|_{C(S^{n-1})} + \|\nabla \Bu\|_{C(S^{n-1})} + \|\nabla^2 \Bu\|_{C^{\alpha}(S^{n-1})}\leq C,
	\end{equation}
	where $C > 0$ depends only on $\alpha$, $n$ and $\|\Bf\|_{\textnormal{Lip}(S^{n-1})}$.
\end{pro}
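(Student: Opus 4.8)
The plan is to derive \eqref{eq:C2estimates} from Proposition \ref{Energy estimates2}, Proposition \ref{higherint2}, Proposition \ref{lem:keylemma}, and a bootstrap using interior Schauder estimates. First I would invoke the energy estimate \eqref{eq:energyes3} together with \eqref{eq:energyes4} to get that
\[
\|\Bu\|_{H^1(S^{n-1})} + \|p\|_{L^{\frac{n-1}{n-3}}(S^{n-1})} + \|\nabla p\|_{L^{\frac{n-1}{n-2}}(S^{n-1})}
\]
is bounded in terms of $\|\Bf\|_{\textnormal{Lip}(S^{n-1})}$ only; by the $(-1)$-homogeneity of $\Bu$ and $(-2)$-homogeneity of $p$ (Lemma \ref{Lemmapressure}), the corresponding local norms on an annulus $B_2\setminus B_{1/2}$ are controlled by the same quantity. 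Proposition \ref{higherint2} gives $\|H_+\|_{L^\theta(S^{n-1})}\le C(\|\Bf\|_{\textnormal{Lip}})$ with $\theta=\frac{(n-2)(n-1)}{2(n-3)}>\frac n2$ for all $n\geq 5$, so again by homogeneity $\|H_+\|_{L^\theta(B_2\setminus B_{1/2})}$ is bounded. Thus around any point $x_0$ with $|x_0|=1$ we may apply Proposition \ref{lem:keylemma} on a ball $B_r(x_0)\subset B_2\setminus B_{1/2}$ (after rescaling to unit size), with $\gamma=\theta$ and $\Bf\in L^\infty$ since $\Bf$ is locally Lipschitz hence bounded on the annulus; this yields $\|\Bu\|_{L^\infty(S^{n-1})}+\|\nabla\Bu\|_{L^\infty(S^{n-1})}\le C(n,\|\Bf\|_{\textnormal{Lip}})$.

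Next I would bootstrap. Once $\Bu\in L^\infty$ locally, the pressure equation $-\Delta p=\partial_i u_j\,\partial_j u_i-\div\Bf$ has right-hand side in $L^p_{loc}$ for the $p$ dictated by $\nabla\Bu\in L^\infty$ (in fact the right side is bounded since $\nabla\Bu\in L^\infty$ and $\Bf$ is Lipschitz), so by Calderón–Zygmund estimates $p\in W^{2,p}_{loc}$ for all finite $p$, hence $\nabla p\in C^{\alpha}_{loc}$ for any $\alpha\in(0,1)$; by homogeneity the corresponding sphere norms are controlled. Feeding this into the momentum equation, viewed as $-\Delta\Bu=\Bf-(\Bu\cdot\nabla)\Bu-\nabla p$, the right-hand side is now in $C^{\alpha}_{loc}$ (the convection term is a product of $C^{0,1}$ and $C^\alpha$ functions, hence $C^\alpha$; $\Bf$ is locally Lipschitz hence $C^\alpha$; $\nabla p\in C^\alpha$), so interior Schauder estimates give $\Bu\in C^{2,\alpha}_{loc}(\mathbb{R}^n\setminus\{0\})$ with
\[
\|\nabla^2\Bu\|_{C^{\alpha}(B_2\setminus B_{1/2})}\le C\bigl(\|\Bf\|_{C^\alpha}+\|(\Bu\cdot\nabla)\Bu\|_{C^\alpha}+\|\nabla p\|_{C^\alpha}+\|\Bu\|_{L^\infty}\bigr),
\]
and transferring to $S^{n-1}$ via homogeneity yields \eqref{eq:C2estimates}. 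A small point: the proposition is stated for $\Bf\in\textnormal{Lip}$, so only $C^{1,\alpha}$ regularity of $\Bf$ is guaranteed near the sphere — but $C^{0,1}\subset C^{0,\alpha}$ suffices for $C^{2,\alpha}$ of $\Bu$, which is exactly the claimed regularity.

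The main obstacle is the first step: verifying that the hypotheses of Proposition \ref{lem:keylemma} — which is a \emph{local} statement on a unit ball — are met by the self-similar solution, which requires carefully converting the spherical estimates of Proposition \ref{Energy estimates2} and Proposition \ref{higherint2} into local estimates on a fixed annulus, and making sure all the constants depend only on $n$ and $\|\Bf\|_{\textnormal{Lip}(S^{n-1})}$ (in particular, that $\|\Bf\|_{L^\infty(B_r(x_0))}$ for $|x_0|=1$ is comparable to $\|\Bf\|_{L^\infty(S^{n-1})}$, which holds by the $(-3)$-homogeneity). The remaining steps are routine elliptic regularity; the only care needed is to keep $\alpha$ fixed throughout and to note that the scaling used to place $B_r(x_0)$ inside the annulus introduces only dimensional constants because the Navier–Stokes system is scaling invariant in the sense of \eqref{eq:sca} and $r$ can be taken to depend on $n$ alone.
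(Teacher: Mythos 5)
Your proposal follows essentially the same route as the paper: combine Propositions \ref{Energy estimates2} and \ref{higherint2} with the homogeneity of $\Bu$, $p$, $H_+$ to verify the hypotheses of the local regularity criterion Proposition \ref{lem:keylemma} on balls covering $S^{n-1}$ (noting $\theta>\frac n2$), obtain $\|\Bu\|_{L^\infty}+\|\nabla\Bu\|_{L^\infty}\le C$, and then bootstrap to $C^{2,\alpha}$. The only substantive difference is cosmetic: the paper bootstraps through the Stokes system for the pair $(\Bu,p)$, while you decouple by first applying Calder\'on--Zygmund to the pressure Poisson equation; both work.

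One step as written is out of order. After the $L^\infty$ stage you only know $\nabla\Bu\in L^\infty$, so the convection term $(\Bu\cdot\nabla)\Bu$ is merely bounded, not $C^\alpha$; your parenthetical ``product of $C^{0,1}$ and $C^\alpha$ functions'' presumes $\nabla\Bu\in C^\alpha$, which has not yet been established at that point. The fix is the intermediate step the paper inserts: with the right-hand side of $-\Delta\Bu=\Bf-(\Bu\cdot\nabla)\Bu-\nabla p$ in $L^\infty_{loc}$, interior $W^{2,p}$ estimates for all finite $p$ plus Morrey embedding give $\Bu\in C^{1,\alpha}_{loc}$ first; only then is $(\Bu\cdot\nabla)\Bu\in C^{\alpha}_{loc}$ and Schauder yields $C^{2,\alpha}$. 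With that one insertion your argument is complete.
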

\begin{proof}
	By virtue of Lemma \ref{Lemmapressure}, there is a $(-2)$-homogeneous function $p$, such that $(\Bu, p)$ is  a solution to \eqref{NS} in $\mathbb{R}^n \setminus \{0\}$. Let $\{x_k: x_k \in S^{n-1}\}_{k=1}^m$ be a finite collection of points on $S^{n-1}$ such that $\left\{B_{\frac{1}{10}} (x_k)\subset \mathbb{R}^n:\ x_k \in S^{n-1}\right\}_{k=1}^m $  covers $S^{n-1}$.  Applying Propositions 
	\ref{Energy estimates2} and \ref{higherint2}, 
	and using the homogeneity of $\Bu$ and $p$, we have 
	\begin{equation*}
		\|\Bu\|_{H^1\left(B_{\frac12}(x_k)\right)} +\|p\|_{W^{1,\frac{n}{n-1}}\left(B_{\frac12}(x_k)\right)} + \|\Bf\|_{\textnormal{Lip}\left(B_{\frac12}(x_k)\right)} + \|H_+\|_{L^{\theta}\left(B_{\frac12}(x_k)\right)} \leq C_0,
	\end{equation*}
	for some $C_0$ depending only on $\|\Bf\|_{\textnormal{Lip}(S^{n-1})}$ and $n$. 
	Then  Proposition \ref{lem:keylemma}   implies
	\begin{equation*}
		\|\Bu\|_{L^{\infty}\left(B_{\frac14}(x_k)\right)} + 	\|\nabla\Bu\|_{L^{\infty}\left(B_{\frac14}(x_k)\right)}  \leq C
	\end{equation*}
	for some $C$ depending only on $\|\Bf\|_{\textnormal{Lip}(S^{n-1})}$ and $n$.
	This gives
	\[\|\Bu \cdot \nabla \Bu\|_{L^{\infty}\left(B_{\frac14}(x_k)\right)} \leq C.\]
	Now, it follows from the regularity estimates for the Stokes equations (see \cite[Lemma 2.12]{Tsai18}) that we have for any $p>1$
	\begin{equation*}
		\|\Bu\|_{W^{2,p}\left(B_{\frac15}(x_k)\right)}\leq C.
	\end{equation*}
	And by Morrey's embedding theorem (see \cite[Theorem 7.26]{GilbargTrudinger}), it holds that
	\begin{equation*}
		\|\Bu\|_{C^{1,\alpha}\left(B_{\frac16}(x_k)\right)}\leq C \textrm{ for any } \alpha \in (0,1).
	\end{equation*} 
	Then for any $\alpha\in (0,1)$, we have
	\[\|\Bu \cdot \nabla \Bu\|_{C^{\alpha}\left(B_{\frac{1}{8}}(x_k)\right)}\leq C.\]
	It follows from the regularity estimates for the Stokes equations again that \begin{equation*}
		\|\Bu\|_{C\left(B_{\frac{1}{10}}(x_k)\right)} + \|\nabla \Bu\|_{C\left(B_{\frac{1}{10}}(x_k)\right)} + \|\nabla^2 \Bu\|_{C^{\alpha}\left(B_{\frac{1}{10}}(x_k)\right)}\leq C.
	\end{equation*}
	
	Since the collection $\left\{B_{\frac{1}{10}} (x_k):x_k \in S^{n-1}\right\}_{k=1}^m $ covers $S^{n-1}$, we finish the proof of Proposition \ref{thm:mainest}.
\end{proof}

We are now ready to  apply the Leray–Schauder theorem to solve \eqref{NS}.
For the reader's convenience, we recall the following well-known Leray–Schauder theorem, which follows from the homotopy property of the Leray–Schauder degree (see \cite[Theorem 7.12]{Tsai18}).
\begin{theorem}
	[Leray-Schauder]\label{lem:Leray-Schauder}
	Let $X$ be a Banach space and $T$ be a compact mapping from $ [0, 1]\times X $ into $X$. If
	$T (0,x) = x$ has exactly one solution and there exists a constant $M>$  0 such that for all possible $(\lambda, x) \in 
	[0,1] \times  X$ satisfying $T ( \lambda, x) = x$, it holds that 
	$\|x\|_{X}\leq M$, then $T ( 1,\cdot)$, as a mapping from $X$ into
	itself, has at least one fixed point.   
\end{theorem}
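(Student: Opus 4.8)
The plan is to deduce the fixed–point statement from the three standard axioms of the Leray--Schauder degree for compact perturbations of the identity: normalization, homotopy invariance, and the solution property. First I would fix a radius $R>M$ and work in the open ball $B_R=B_R(0)\subset X$. For $\lambda\in[0,1]$ set $\Phi_\lambda:=I-T(\lambda,\cdot):\overline{B_R}\to X$. Since $T$ is compact, each $T(\lambda,\cdot)$ is a compact map, so $\Phi_\lambda$ is an admissible compact perturbation of the identity and the degree $d(\lambda):=\deg(\Phi_\lambda,B_R,0)$ makes sense \emph{provided} $0\notin\Phi_\lambda(\partial B_R)$. This nonvanishing on the boundary is exactly what the a priori bound delivers: if $x\in\partial B_R$ satisfied $\Phi_\lambda(x)=0$, i.e. $T(\lambda,x)=x$, then $\|x\|_X\le M<R$, a contradiction. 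Hence $d(\lambda)$ is well defined for every $\lambda\in[0,1]$.

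Next I would invoke homotopy invariance. The map $(\lambda,x)\mapsto T(\lambda,x)$ is compact on $[0,1]\times\overline{B_R}$ (continuous, with relatively compact image of the bounded set), so $(\lambda,x)\mapsto\Phi_\lambda(x)$ is an admissible homotopy of compact perturbations of the identity that, by the previous step, never vanishes on $\partial B_R$. The homotopy property then gives that $d(\lambda)$ is independent of $\lambda$; in particular $d(1)=d(0)$. To finish, one evaluates $d(0)$: by hypothesis $T(0,\cdot)$ has a unique fixed point $x_0$, and $\|x_0\|_X\le M<R$, so $x_0\in B_R$ is the only zero of $\Phi_0$ in $\overline{B_R}$. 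In the normalization in which the homotopy is built so that $T(0,\cdot)$ is linear --- in the applications, the zero map --- one has $\Phi_0=I$ and the normalization axiom yields $d(0)=\deg(I,B_R,0)=1$. Therefore $d(1)=d(0)=1\neq0$, and the solution property of the degree produces a point $x\in B_R$ with $\Phi_1(x)=0$, i.e. $T(1,x)=x$; this is the desired fixed point.

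The one delicate point is the evaluation of $d(0)$: the bare statement ``$T(0,\cdot)$ has exactly one fixed point'' does not by itself force $d(0)\neq0$, since a compact perturbation of the identity can have a unique zero of index zero. This is harmless for our purposes because the homotopy in the application is always arranged so that $T(0,\cdot)$ is the zero operator, for which the normalization axiom gives $d(0)=1$ outright; so the theorem is to be read with that normalization understood (equivalently, with the hypothesis replaced by $\deg(I-T(0,\cdot),B_R,0)\neq0$). Everything else is a routine packaging of the standard degree axioms, and the only ``analytic'' input — the compactness of $T$ and the a priori bound $\|x\|_X\le M$ — is supplied by assumption (and, in the concrete use here, by the $C^{2,\alpha}$ estimate of Proposition~\ref{thm:mainest}).
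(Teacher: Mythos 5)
The paper offers no proof of this statement---it is recalled verbatim from \cite[Theorem 7.12]{Tsai18}---so there is nothing internal to compare against; your degree-theoretic argument (admissibility of $I-T(\lambda,\cdot)$ on $\partial B_R$ via the a priori bound, homotopy invariance, evaluation at $\lambda=0$) is the standard proof and is correct for the corrected statement you describe. The ``delicate point'' you isolate is real and worth making concrete: with the hypothesis read literally, the theorem is false. Take $X=\mathbb{R}$ and $T(\lambda,x)=x-x^{2}-\lambda$. This is a compact mapping of $[0,1]\times\mathbb{R}$ into $\mathbb{R}$, the equation $T(0,x)=x$ has exactly one solution $x=0$, and every solution of $T(\lambda,x)=x$ satisfies $|x|\le 1$; yet $T(1,x)=x$ reads $x^{2}=-1$ and has no solution. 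The unique zero of $I-T(0,\cdot)$ here has index $0$, which is precisely the failure mode you anticipated. So the hypothesis must indeed be understood as ``the unique solution of $T(0,x)=x$ has nonzero index'' (equivalently $\deg(I-T(0,\cdot),B_R,0)\neq 0$).

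One correction to your proposed repair, though: in this paper the homotopy is \emph{not} arranged so that $T(0,\cdot)$ is the zero operator. From \eqref{eq:soluformu1}, $T(0,\Bv)$ is the Stokes solution with right-hand side $-\div(\Bv\otimes\Bv)$, a quadratic (not constant) map, and its unique fixed point $\Bu=0$ comes from the rigidity theorem of \cite{Tsai98,Sverak11}, not from triviality of the operator. The degree at $\lambda=0$ is nevertheless $1$, and one can see this in either of two ways: (a) the Fr\'echet derivative of the quadratic map $T(0,\cdot)$ at $0$ vanishes, so $I-T(0,\cdot)$ linearizes to $I$ at its unique zero and the index there is $1$ by the Leray--Schauder index formula; or (b) run the auxiliary homotopy $s\mapsto I-sT(0,\cdot)$, $s\in[0,1]$, whose admissibility on $\partial B_R$ follows because $\Bu=sT(0,\Bu)$ means $\Bw:=s\Bu$ is a self-similar solution of \eqref{NS} with zero force, hence $\Bw=0$ by the same rigidity theorem; normalization then gives degree $1$. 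With either of these supplements in place of your ``$T(0,\cdot)$ is the zero map'' remark, your argument is complete and applies to the map $T$ actually used in Section \ref{sec:proof of main}.
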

Define the function space 
\begin{equation}\label{eq:defofX}
	X=\{ \Bv:\ \div \Bv=0, \  \Bv \textrm{ is (-1)-homogeneous,} \ \Bv\in C^{1}(\mathbb{R}^n\setminus\{0\})\},
\end{equation}
equipped with the norm 
\begin{equation*}
	\|\Bv\|_{X}= \|\Bv\|_{C(S^{n-1})} + \|\nabla \Bv\|_{C(S^{n-1})}. 
\end{equation*}
The function space $X$ is a Banach space. 

Assume that $\Bf$ is $(-3)$-homogeneous and $\Bf$ is locally Lipschitz on $\mathbb{R}^n \setminus \{0\}$. For every $(\lambda, \Bv) \in [0, 1]\times X$, consider the problem 
\begin{equation}\label{mainproof-3}
	-\Delta \Bu + \nabla p = \lambda \Bf - (\Bv \cdot \nabla)\Bv = \lambda \Bf  - \div (\Bv \otimes \Bv), \ \ \ \div \Bu =0, \ \ \ \textrm{ in } \mathbb{R}^n \setminus \{0\}. 
\end{equation}
Let $G_{ij}$ be the Green tensor to the Stokes equation 
\begin{equation}\label{eq:Greentensor1}
	G_{ij}(x)= \frac{1}{2n\omega_n} \left[\frac{\delta_{ij}}{(n-2)|x|^{n-2}} +\frac{ x_i x_j}{|x|^n}\right],
\end{equation}
where $\omega_n$ is the volume of the unit ball in $\mathbb{R}^n$. 
The solution $\Bu$ to \eqref{mainproof-3} can be expressed as 
\begin{equation}\label{eq:soluformu1}
	u_i(x)=G_{ij}*( - \partial_k(v_k v_j)+ \lambda f_j) =  \partial_k G_{ij} * (v_k v_j) + \lambda G_{ij}*f_j,
\end{equation}
where $1\leq i,j,k \leq n$  and the Einstein summation convention is used. It can be directly checked that $\Bu$ is $(-1)$-homogeneous, since $\Bv$ is $(-1)$-homogeneous and  $\Bf$ is $(-3)$-homogeneous. Moreover, according to Lemma \ref{Riesz}, 
\begin{equation}\label{mainproof-6}
	\begin{aligned}
		& \|\nabla \Bu\|_{L^\beta(S^{n-1})} + \|\nabla^2 \Bu \|_{L^\beta(S^{n-1})} \\
		\leq&   C (n, \beta) \left( \|\Bv\|_{L^\infty(S^{n-1})}^2 + \|\nabla \Bv\|_{L^\infty(S^{n-1})}^2  +
		\|\Bf\|_{L^\infty(S^{n-1})} + \|\nabla \Bf\|_{L^\infty(S^{n-1})} \right), \ \ \ 1<\beta<\infty. 
	\end{aligned}
\end{equation}
Choosing some $\beta>n$ and by Morrey's embedding, we can conclude that $\Bu \in X$.

Let $T$ be the solution map of the problem \eqref{mainproof-3}, i.e., $\Bu = T(\lambda, \Bv)$. As we discussed above, $T(\lambda, \cdot)$ is a map from $X$ to $X$ for every $0\leq \lambda \leq 1$. 


\begin{proof}[{\bf Proof of Theorem \ref{thm:main}}]  
	\    {\bf {(i) Existence.}} We aim to find a fixed point of the map $T(1, \cdot): X \to X$. First of all, $T(0,\Bu)=\Bu$ has exactly one solution, which is 0. Indeed, it is clear that $\Bu=0$ is a solution of 
	\begin{equation}\label{eq:NS0}
		-\Delta \Bu + (\Bu \cdot \nabla )\Bu +\nabla p =0, \ \ \ \div \Bu =0, 
	\end{equation}
	Furthermore, it follows from \cite{Tsai98, Sverak11} (see also \cite{bang2023rigidity}) that for $n\geq 4$  all the self-similar solutions of \eqref{eq:NS0} must be trivial.
	
	On the other hand, by virtue of \eqref{mainproof-6} and the Sobolev embedding theory on $S^{n-1}$, we can conclude that $T$ is a compact mapping from $ [0, 1]\times X$ into $X$.
	{
		Indeed, let $\{\Bu^{(k)}\}_{k=1}^{\infty}$
		be a bounded sequence in $X$. Then the homogeneity of $\Bu^{(k)}$ and \eqref{mainproof-6} imply that for any fixed $j=1,\cdots,n$, the sequence $\{T(\Bu^{(k)})\cdot \Be_j \}_{k=1}^\infty $ is a bounded sequence of scalar functions in $W^{2,\beta }(S^{n-1}), 1<\beta<\infty$. Hence using the Sobolev embedding on $S^{n-1}$, one can find its subsequence that is convergent in $C^1(S^{n-1})$. Iterating  this process for each $j=1,\cdots,n$, one can in fact find a subsequence of $\{T(\Bu^{(k)})\}_{k=1}^{\infty}$ that is convergent in $X$. This indeed proves the compactness of $T.$
	}
	
	Finally, assume that $\Bu$ is the fixed point of $T(\lambda, \cdot)$ in $X$, i.e., $\Bu$ is the self-similar solution to the problem 
	\begin{equation*}
		-\Delta \Bu  + \Bu \cdot \nabla \Bu+ \nabla p= \lambda \Bf, \quad \div \Bu=0.
	\end{equation*}
	{By the regularity estimates for the Stokes equations, $\Bu$ in fact belongs to $C^2(\mathbb{R}^n \setminus \{0\})$.} According to Proposition \ref{thm:mainest}, we have a uniform bound for $\|\Bu \|_{C(S^{n-1})}$ and $\|\nabla \Bu\|_{C(S^{n-1})}$, which is independent of $\lambda$. 
	Now applying Theorem \ref{lem:Leray-Schauder}, we finish the proof for the existence part of Theorem \ref{thm:main}. 
	
	{\bf (ii) Regularity. }\ The regularity of the solution follows from the classical regularity theory for the Navier-Stokes equations as in the proof of Proposition \ref{thm:mainest}. In particular, estimates \eqref{eq:C2est} holds. Furthermore, it can be proved similarly that the solution is smooth on $\mathbb{R}^n\setminus\{0\}$ if the external force $\Bf$ is smooth on $\mathbb{R}^n\setminus\{0\}$.
	
	{\bf (iii) Uniqueness.}
	To show the  uniqueness result in Theorem \ref{thm:main}, we note that it follows from  \cite[Theorem 1.2]{Kaneko19} that there is only one self-similar solution to \eqref{NS} satisfying  
	\begin{equation}\label{eq:smallness}
		|\Bu(x)| \leq \frac{\epsilon_0}{|x|}
	\end{equation}
	for some universal constant $\epsilon_0$ depending only on $n$.
	We are left to verify that all the self-similar solutions to \eqref{NS} satisfy \eqref{eq:smallness} if $\| \Bf \|_{\textnormal{Lip}(S^{n-1})} \leq \epsilon$ for some $\epsilon$ small enough, which depends only on $n$.
	Write  \eqref{NS} as
	\begin{equation*}
		-\Delta \Bu + \nabla p=  \Bf-\Bu \cdot \nabla \Bu, \quad \div \Bu=0.
	\end{equation*}
	Using Propositions \ref{Energy estimates2} and \ref{thm:mainest}, we have
	$$|\Bu \cdot \nabla \Bu|\leq C |\nabla \Bu|, \textrm{ and } \|\nabla \Bu\|_{L^2(S^{n-1})}\leq C(n, \|\Bf\|_{\textnormal{Lip}(S^{n-1})}),$$ where $C(n, \|\Bf\|_{\textnormal{Lip}(S^{n-1})})\to 0$ as long as $\|\Bf\|_{\textnormal{Lip}(S^{n-1})}\to 0$. Now, it follows from the regularity estimates for the Stokes equations (see \cite[Lemma 2.12]{Tsai18}) and the self-similarity of the solution that we have
	\begin{equation*}
		\|\Bu\|_{W^{2,2}(S^{n-1})}\leq C(n,\|\Bf\|_{\textnormal{Lip}(S^{n-1})})= C(n,\epsilon),
	\end{equation*}
	and  $C(n,\epsilon)\to0$ as $\epsilon\to 0$.
	Hence, by a bootstrap argument, it holds that for any $p>1$
	\begin{equation*}
		\|\Bu\|_{W^{2,p}(S^{n-1})}\leq C(n, p, \epsilon).
	\end{equation*}
	with  $C(n, p, \epsilon)\to 0$ as $\epsilon \to 0$. This implies \eqref{eq:smallness} as long as $\epsilon$ is small enough.
\end{proof}

\section{The case that the forces have only nonnegative radial components}\label{Section4}
If the tangential part of $\Bf$ vanishes, i.e., $ \Bf = r^{-3}f^r\Be_r$, and $f^r$ is non-negative, we can remove the dimension restriction  in Theorem \ref{thm:main}. The advantage here is that the term $\Bf\cdot \Bu$ becomes $f^ru^r$ and is controlled by $f^ru^r_+$. Then we can use the relation \eqref{NSnorm2} between $H$ and $u^r$   to get  control of $u^r_+$ in terms of $\Bf$ only (see \eqref{leq:energyes-2} below), which eventually leads to energy estimates for all dimensions.
\begin{theorem}
	\label{thm:main2}
	Let $\Bf=r^{-3}f^r \Be_{r}$ be a $(-3)$-homogeneous force  such that $\Bf$ is locally Lipschitz on $\mathbb{R}^n\setminus \{0\}$ and $f^r \geq 0$.  Then we have the following results. 
	\begin{itemize}
		\item[(i)]  For $n\geq 4$, 
		there exists at least one  self-similar solution $\Bu$  to the steady Navier–Stokes equations \eqref{NS}, such that 
		\begin{equation*}
			\|\Bu\|_{C(S^{n-1})} + 	 	\|\nabla \Bu\|_{C(S^{n-1})} + \|\nabla^2 \Bu\|_{C^{\alpha}(S^{n-1})} \leq C,
		\end{equation*}
		where the constant $\alpha\in(0,1)$ and  $C > 0$ depends only on $\alpha$, $n$ and  $\|\Bf\|_{\textnormal{Lip}(S^{n-1})}$.
		\item[(ii)] If in addition, the external force $\Bf$ is smooth on $\mathbb{R}^n \setminus \{0\}$, then 
		the self-similar solution $(\Bu, p)$  we obtained is also smooth on  $\mathbb{R}^n \setminus \{0\}$. 
		\item[(iii)]
		There exists a universal constant $\epsilon>0$ depending only on the dimension $n$ such that if  $\|\Bf\|_ {\textnormal{Lip}(S^{n-1})}\leq \epsilon$, then the self-similar solution is unique in $C^{2}(\mathbb{R}^n \setminus \{0\})$.
	\end{itemize}
	
\end{theorem}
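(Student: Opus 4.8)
The plan is to re-run the a priori estimates of Section~\ref{sec:A-priori} and the Leray--Schauder scheme of Section~\ref{sec:proof of main}, exploiting two simplifications produced by the hypothesis $\Bf=r^{-3}f^r\Be_r$, $f^r\geq 0$; I would treat $n\geq 5$, the case $n=4$ being already contained in Theorem~\ref{thm:main} (equivalently, for $n=4$ the identity \eqref{eq:energyest} reads $\int_{S^{3}}|\nabla\Bu|^{2}\,d\sigma=\int_{S^{3}}\Bf\cdot\Bu\,d\sigma$, which closes at once via \eqref{eq:Sobolev} and the Sobolev inequality). A direct computation using the $0$-homogeneity of $f^r$ gives $\div\Bf=(n-4)r^{-4}f^r\geq 0$ and $\Bf\cdot\Bu=r^{-4}f^ru^r$, so on $S^{n-1}$ the source term of \eqref{eq:headpre2} satisfies $\Bf\cdot\Bu-\div\Bf=f^r\bigl(u^r-(n-4)\bigr)\leq f^ru^r_{+}$. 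Re-running the proof of Lemma~\ref{lem:H+control} with this inequality — test \eqref{eq:headpre2} against $H_+^{(n-4)/2}$, drop the nonpositive enstrophy term, then apply the Sobolev and H\"older inequalities on $S^{n-1}$ exactly as before — should replace \eqref{eq:secestforH} by
\[
\|H_+\|_{L^{\theta}(S^{n-1})}\leq C(n)\,\|f^r\|_{L^{\infty}(S^{n-1})}\,\|u^r_{+}\|_{L^{q}(S^{n-1})},
\]
with $\theta,q$ as in \eqref{eq:theta}. The essential gain is that the right-hand side now involves only the \emph{scalar} $u^r_{+}$ and the force, never the full velocity field, which is where the dimensional restriction entered in Section~\ref{sec:A-priori}.

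Next I would estimate $u^r_{+}$ using the relation \eqref{NSnorm2}, i.e. $-\Delta_{S^{n-1}}u^r+\Bu^{t}\cdot\nabla^{S^{n-1}}u^r=2H+f^r$. Testing \eqref{NSnorm2} against the powers $(u^r_{+})^{2k-1}$, $k\geq 1$, the drift term integrates (via \eqref{divfree}) to the \emph{nonnegative} quantity $\tfrac{n-2}{2k}\int_{S^{n-1}}(u^r_{+})^{2k+1}\,d\sigma$, so it has a favourable sign and can be kept on the left, while $2H+f^r\leq 2H_++f^r$; this yields, schematically,
\[
\bigl\|\nabla^{S^{n-1}}(u^r_{+})^{k}\bigr\|_{L^{2}(S^{n-1})}^{2}+\|u^r_{+}\|_{L^{2k+1}(S^{n-1})}^{2k+1}\leq C\int_{S^{n-1}}\bigl(2H_++f^r\bigr)(u^r_{+})^{2k-1}\,d\sigma .
\]
For $k=1$ this already bounds $\|u^r_{+}\|_{L^{3}(S^{n-1})}$ and $\|\nabla^{S^{n-1}}u^r_{+}\|_{L^{2}(S^{n-1})}$ by $\bigl(\|H_+\|_{L^{\theta}(S^{n-1})}+\|f^r\|_{L^{\infty}(S^{n-1})}\bigr)^{1/2}$ and by $\bigl(\|H_+\|_{L^{\theta}(S^{n-1})}+\|f^r\|_{L^{\infty}(S^{n-1})}\bigr)^{3/4}$ respectively (Young's inequality, using $\theta'<2$). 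Since $H_+\in L^{\theta}$ with $\theta>\tfrac{n-1}{2}$ and $f^r\in L^{\infty}$, the right-hand side above is subcritical on $S^{n-1}$, so iterating for $k=2,3,\dots$ finitely many times (the number depending only on $n$) and using the Sobolev embedding on $S^{n-1}$ raises the integrability of $u^r_{+}$ up to the exponent $q$, with the accumulated power of $\|H_+\|_{L^{\theta}(S^{n-1})}$ in the final bound for $\|u^r_{+}\|_{L^{q}(S^{n-1})}$ staying strictly below $1$. Substituting that bound into the displayed $H_+$ estimate gives $\|H_+\|_{L^{\theta}(S^{n-1})}\lesssim_n \|f^r\|_{L^{\infty}(S^{n-1})}\bigl(1+\|H_+\|_{L^{\theta}(S^{n-1})}\bigr)^{\sigma}$ with $\sigma<1$, and since $\Bu\in C^{2}(\mathbb{R}^n\setminus\{0\})$ makes $\|H_+\|_{L^{\theta}(S^{n-1})}$ finite a priori, the loop closes:
\[
\|H_+\|_{L^{\theta}(S^{n-1})}+\|u^r_{+}\|_{L^{q}(S^{n-1})}\leq C\bigl(n,\|\Bf\|_{\textnormal{Lip}(S^{n-1})}\bigr).
\]

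From here everything follows as in Sections~\ref{sec:A-priori}--\ref{sec:proof of main}. Lemma~\ref{lem:urestim} bounds $\|u^r\|_{L^{2}(S^{n-1})}$ and $\|H_-\|_{L^{1}(S^{n-1})}$; then \eqref{eq:keydec-2}, together with $\Bf\cdot\Bu\leq f^ru^r_{+}$ and $\int_{S^{n-1}}H_+u^r_{-}\,d\sigma\leq\|H_+\|_{L^{\theta}}\|u^r_{-}\|_{L^{\theta'}}$, yields the full energy bound $\int_{S^{n-1}}\bigl(|\nabla\Bu|^{2}+(n-4)|\Bu|^{2}\bigr)\,d\sigma\leq C(n,\|\Bf\|_{\textnormal{Lip}(S^{n-1})})$. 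The $C^{2,\alpha}_{loc}$ a priori estimate \eqref{eq:C2est} then follows verbatim from Proposition~\ref{thm:mainest}, whose proof uses only the energy bound and the higher integrability of $H_+$ just obtained; parts~(i)--(ii) follow by the Leray--Schauder argument in the proof of Theorem~\ref{thm:main} (the homotopy \eqref{mainproof-3} with force $\lambda\Bf$ stays of the special radial type for every $\lambda\in[0,1]$, has only the trivial solution at $\lambda=0$, and obeys the $\lambda$-uniform bound just established, so Theorem~\ref{lem:Leray-Schauder} applies), and part~(iii) is identical to part~(iii) of Theorem~\ref{thm:main}, the a priori estimates forcing $\|\nabla\Bu\|_{L^{2}(S^{n-1})}\to 0$ as $\|\Bf\|_{\textnormal{Lip}(S^{n-1})}\to 0$, hence \eqref{eq:smallness} by the Stokes regularity estimates and a bootstrap, whence uniqueness via \cite[Theorem~1.2]{Kaneko19}.

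The step I expect to be the main obstacle is the iteration bounding $u^r_{+}$: one must verify that running the powers $(u^r_{+})^{2k-1}$ through \eqref{NSnorm2} genuinely keeps the accumulated power of $\|H_+\|_{L^{\theta}(S^{n-1})}$ strictly below $1$ at each finite stage — this is precisely what lets the loop with the $H_+$ estimate close for \emph{arbitrarily large} forces — and that this works in \emph{every} dimension because \eqref{NSnorm2} is scalar and its right-hand side $2H_++f^r$ is already controlled, so the argument never re-introduces a Sobolev embedding for the vector field $\Bu$, which is exactly the mechanism that limited Theorem~\ref{thm:main} to $n\leq 16$.
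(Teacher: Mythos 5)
Your proposal is correct and follows essentially the same route as the paper: the paper's Proposition \ref{pro:ngeq13} likewise first derives $\|H_+\|_{L^{\theta}(S^{n-1})}\leq C\|f^ru^r_+\|_{L^q(S^{n-1})}+C\|\div\Bf\|_{L^q(S^{n-1})}$, then tests \eqref{NSnorm2} against a power of $u^r_+$ (a single test with exponent $\beta=2\theta-2$ rather than your iteration over odd powers, but the mechanism — the favourable sign of the cubic term from \eqref{divfree} plus Young's inequality, with the accumulated power of $\|H_+\|_{L^\theta}$ equal to $\tfrac12<1$ — is identical), closes the loop to bound $H_+$ and $u^r_+$ by $\Bf$ alone, and finishes with \eqref{eq:keydec-2}, Proposition \ref{thm:mainest}, and the Leray--Schauder and uniqueness arguments of Theorem \ref{thm:main}. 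Your pointwise simplification $\Bf\cdot\Bu-\div\Bf\leq r^{-4}f^ru^r_+$ (using $\div\Bf=(n-4)r^{-4}f^r\geq0$) is a harmless cosmetic variant of the paper's keeping the $\div\Bf$ term separately.
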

To show Theorem \ref{thm:main2}, we apply the Leray-Schauder theorem as  in the proof of Theorem \ref{thm:main}. It remains to establish the a priori estimates. We have the following  proposition.
\begin{pro}\label{pro:ngeq13}
	Assume that $n\geq 5$,  the $(-3)$-homogeneous force  $\Bf=r^{-3} f^r \Be_{r}$ is locally Lipschitz on $\mathbb{R}^n \setminus \{0\}$,  and that $f^r\geq 0$.  Let $\Bu \in C^2(\mathbb{R}^n \setminus\{0\})$ be a  self-similar solution to \eqref{NS}. Then there exists a constant $C$ depending only on the dimension $n$ such that
	\begin{equation}\label{eq:energyes2}
		\int_{S^{n-1}} |\nabla \Bu|^2 +(n-4)|\Bu|^2  d \sigma \leq 
		C \left( \|f^r\|_{L^\infty (S^{n-1})}^{3}  + \|f^r\|_{L^\infty (S^{n-1})}^{\frac32}  + \|\div \Bf\|_{L^q(S^{n-1})}^{\frac32}\right). 
	\end{equation}
	Moreover, we have
	\begin{equation}
		\begin{aligned}
			\| p\|_{L^{\frac{n-1}{n-3}}(S^{n-1})} + \|\nabla p\|_{L^{\frac{n-1}{n-2}}(S^{n-1})} 
			\leq C \left( \|f^r\|_{L^\infty (S^{n-1})}^{3}  + \|f^r\|_{L^\infty (S^{n-1})}  + \|\div \Bf\|_{L^q(S^{n-1})}^{\frac32} \right), 
		\end{aligned}
	\end{equation}
	and 
	\begin{equation}
		\begin{aligned}\label{estimateforH}
			\|H_+\|_{L^{\theta}(S^{n-1})} 
			&\leq  C \left( \|f^r\|_{L^\infty (S^{n-1})}^{2}  + \|f^r\|_{L^\infty (S^{n-1})}^{\frac32}  + \|\div \Bf\|_{L^q(S^{n-1})}\right),
		\end{aligned}
	\end{equation}  
	where $q$ and $\theta$ are defined in \eqref{eq:theta}. 
\end{pro}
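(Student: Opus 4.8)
The plan is to run the scheme of Proposition~\ref{Energy estimates2}, but to bypass the estimate for $\|\Bu\|_{L^q(S^{n-1})}$ (Lemma~\ref{highintegralofu}, which is what forced $n\le 16$) and instead build a self-contained bootstrap for $u^r_+$ out of the relation \eqref{NSnorm2}. Two observations drive this. First, the sign condition $f^r\ge 0$ gives, on $S^{n-1}$, $\Bf\cdot\Bu=f^r u^r\le f^r u^r_+$, so the troublesome term $\Bf\cdot\Bu$ is now controlled by $u^r_+$ alone; moreover $\div\Bf$ has a definite sign. Second, a one-line computation shows the exponents of \eqref{eq:theta} satisfy the \emph{strict} inequality $\theta'<\tfrac{n-1}{n-3}$, where $\theta'=\tfrac{\theta}{\theta-1}$; this is exactly what will make the bootstrap terminate after finitely many (dimension-dependent) steps.

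The first building block is the $H_+$ estimate. Running the proof of Lemma~\ref{lem:H+control} verbatim, but bounding $\Bf\cdot\Bu\,H_+^{(n-4)/2}=f^r u^r H_+^{(n-4)/2}\le f^r u^r_+ H_+^{(n-4)/2}$ and using H\"older, one obtains
\[
\|H_+\|_{L^{\theta}(S^{n-1})}\le C\big(\|f^r\|_{L^\infty(S^{n-1})}\|u^r_+\|_{L^q(S^{n-1})}+\|\div\Bf\|_{L^q(S^{n-1})}\big).
\]
The second building block is a family of estimates for $u^r_+$: for each $\gamma\ge 1$, multiply \eqref{NSnorm2} by $(u^r_+)^\gamma$ and integrate over $S^{n-1}$; integrating the transport term by parts with \eqref{divfree}, using $H\le H_+$ on the right and $f^r\le\|f^r\|_{L^\infty}$, this generalizes \eqref{eq:ur_+} to
\[
\left\|\nabla^{S^{n-1}}(u^r_+)^{\frac{\gamma+1}{2}}\right\|_{L^2(S^{n-1})}^2+\|u^r_+\|_{L^{\gamma+2}(S^{n-1})}^{\gamma+2}\le C_\gamma\Big(\|H_+\|_{L^\theta(S^{n-1})}\|u^r_+\|_{L^{\gamma\theta'}(S^{n-1})}^{\gamma}+\|f^r\|_{L^\infty(S^{n-1})}\|u^r_+\|_{L^\gamma(S^{n-1})}^{\gamma}\Big).
\]
By the Sobolev inequality on $S^{n-1}$ applied to $(u^r_+)^{(\gamma+1)/2}$ (its $L^2$-norm being absorbed by the $L^{\gamma+2}$-term via Young), the left side controls $\|u^r_+\|_{L^{(\gamma+1)\frac{n-1}{n-3}}(S^{n-1})}^{\gamma+1}$.

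Now I would iterate. Start from $\|u^r_+\|_{L^2(S^{n-1})}^2\le\|u^r\|_{L^2(S^{n-1})}^2\le C\|H_+\|_{L^\theta(S^{n-1})}+C\|f^r\|_{L^\infty(S^{n-1})}$, which is \eqref{eq:L2estur}, and apply the displayed inequality successively with exponents $\gamma_k$ chosen so that $\gamma_k\theta'$ equals the Lebesgue exponent reached at the previous step. Since $\theta'<\tfrac{n-1}{n-3}$ one has $(\gamma_k+1)\tfrac{n-1}{n-3}>\gamma_k\theta'$ and, quantitatively, the exponents grow at a fixed geometric rate, so after finitely many steps (a number depending only on $n$) one reaches a bound on $\|u^r_+\|_{L^q(S^{n-1})}$. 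Because $\|H_+\|_{L^\theta}$ enters each step \emph{linearly}, this bound has the shape $\|u^r_+\|_{L^q(S^{n-1})}\le C\big(\|H_+\|_{L^\theta(S^{n-1})}^{a}+1\big)\Phi(\Bf)$ with an accumulated exponent $a<1$ — the update $a\mapsto\tfrac{\gamma}{\gamma+1}a+\tfrac{1}{\gamma+1}$ is a convex combination with $1$, so $a$ stays strictly below $1$ after finitely many iterations — and $\Phi$ a fixed polynomial in $\|f^r\|_{L^\infty(S^{n-1})}$ and $\|\div\Bf\|_{L^q(S^{n-1})}$. Substituting this into the $H_+$ estimate and absorbing the sublinear power $\|H_+\|_{L^\theta}^{a}$ by Young's inequality yields \eqref{estimateforH}; feeding that back through the chain bounds $\|u^r_+\|_{L^q(S^{n-1})}$, and then \eqref{eq:L2estur} bounds $\|u^r\|_{L^2(S^{n-1})}$, all purely in terms of $\|f^r\|_{L^\infty(S^{n-1})}$ and $\|\div\Bf\|_{L^q(S^{n-1})}$. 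This iteration is the one genuinely new and delicate point: unlike Proposition~\ref{Energy estimates2}, where Lemma~\ref{highintegralofu} supplies $\|\Bu\|_{L^q}$ in a single step, here one has no one-shot estimate for $u^r_+$ and must carry $\|H_+\|_{L^\theta}$ as an unknown through an $n$-dependent number of applications of \eqref{NSnorm2}, keeping the accumulated power of $\|H_+\|_{L^\theta}$ below $1$ and checking that the Lebesgue exponents — produced by alternately multiplying by $\tfrac{n-1}{n-3}$ (Sobolev) and dividing by $\theta'$ (H\"older) — actually escape past $q$.

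With $\|u^r\|_{L^2(S^{n-1})}$, $\|u^r_+\|_{L^q(S^{n-1})}$ and $\|H_+\|_{L^\theta(S^{n-1})}$ in hand, the rest is a transcription of the general case. From Lemma~\ref{halfenergyestimate} and $\Bf\cdot\Bu\le f^r u^r_+$,
\[
\int_{S^{n-1}}|\nabla\Bu|^2+(n-4)|\Bu|^2\,d\sigma\le C\|f^r\|_{L^\infty(S^{n-1})}\|u^r_+\|_{L^1(S^{n-1})}+C\|H_+\|_{L^\theta(S^{n-1})}\|u^r_-\|_{L^{\theta'}(S^{n-1})},
\]
and since $\theta'<2$ one has $\|u^r_\pm\|_{L^{\theta'}(S^{n-1})}\le C\|u^r\|_{L^2(S^{n-1})}$; inserting the bounds above and tidying up with Young's inequality gives \eqref{eq:energyes2}. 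Finally, as in Proposition~\ref{Energy estimates2}, the Calder\'on--Zygmund estimate for $-\Delta p=\partial_i u_j\partial_j u_i-\div\Bf$ together with the Sobolev embedding on $S^{n-1}$ give $\|p\|_{L^{(n-1)/(n-3)}(S^{n-1})}\le C\|\Bu\|_{L^{2(n-1)/(n-3)}(S^{n-1})}^2+C\|\Bf\|_{L^\infty(S^{n-1})}\le C\|\nabla\Bu\|_{L^2(S^{n-1})}^2+C\|\Bf\|_{L^\infty(S^{n-1})}$, while $\|\nabla p\|_{L^{(n-1)/(n-2)}(S^{n-1})}$ is controlled by \eqref{gradientofp}; both are then bounded by the energy estimate just obtained, which completes the proof of all three displayed bounds.
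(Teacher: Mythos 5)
Your two building blocks are exactly the ones the paper uses: the bound $\|H_+\|_{L^\theta(S^{n-1})}\le C\big(\|f^r\|_{L^\infty(S^{n-1})}\|u^r_+\|_{L^q(S^{n-1})}+\|\div\Bf\|_{L^q(S^{n-1})}\big)$ obtained by rerunning Lemma \ref{lem:H+control} with $\Bf\cdot\Bu= f^ru^r\le f^ru^r_+$, and the identity obtained by testing \eqref{NSnorm2} against a power of $u^r_+$, whose left-hand side carries the term $(n-2)\int_{S^{n-1}}(u^r_+)^{\gamma+2}d\sigma$ coming from \eqref{divfree}. Where you genuinely differ is in how this is closed. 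You run a Moser-type iteration over a chain of exponents, invoking the Sobolev inequality on $(u^r_+)^{(\gamma+1)/2}$ at each step and tracking an accumulated sublinear power of $\|H_+\|_{L^\theta}$; the paper instead makes the single choice $\gamma=\beta:=2\theta-2$, for which $\theta'\beta=\beta+2$, so the H\"older pairing $\int H_+(u^r_+)^\beta\le\|H_+\|_{L^\theta}\|u^r_+\|_{L^{\theta'\beta}}^{\beta}$ lands exactly on the exponent $\beta+2$ already present on the left; since $q<2\theta=\beta+2$, substituting the $H_+$ bound turns the right-hand side into $C\|f^r\|_{L^\infty}\|u^r_+\|_{L^{\beta+2}}^{\beta+1}+C\big(\|f^r\|_{L^\infty}+\|\div\Bf\|_{L^q}\big)\|u^r_+\|_{L^{\beta+2}}^{\beta}$, which Young's inequality absorbs into the $(\beta+2)$-power on the left in one stroke --- no Sobolev inequality, no iteration, no bookkeeping. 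Your iteration does terminate (the exponent map $p\mapsto\tfrac{n-1}{n-3}(p/\theta'+1)$ is expanding because $\theta'<\tfrac{n-1}{n-3}$, and your convexity argument does keep the accumulated power of $\|H_+\|_{L^\theta}$ strictly below $1$), so the scheme is sound; what it costs you is the precise form of the conclusion. The lower-order term $\|(u^r_+)^{(\gamma+1)/2}\|_{L^2(S^{n-1})}^2=\|u^r_+\|_{L^{\gamma+1}(S^{n-1})}^{\gamma+1}$ in the Sobolev inequality is not simply ``absorbed by the $L^{\gamma+2}$-term via Young'': Young's inequality there produces either an additive constant or a fractional power of the right-hand side, and after an $n$-dependent number of iterations the resulting a priori bound carries different (and messier) powers of $\|f^r\|_{L^\infty}$ and $\|\div\Bf\|_{L^q}$ than those displayed in \eqref{eq:energyes2}--\eqref{estimateforH}. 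Such a weaker bound still suffices for the Leray--Schauder argument, but to obtain the stated inequalities you should replace the iteration by the one-exponent choice $\beta=2\theta-2$.
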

\begin{proof}
	If $\Bf=f^r \Be_{r}$ and $f^r\geq 0$,  following the same proof of Lemma \ref{lem:H+control}, we can derive that    
	\begin{equation}\label{eq:energyes-1}
		\|H_+\|_{L^{\theta}(S^{n-1})}
		\leq C \|f^ru^r_+\|_{L^q(S^{n-1})}+C\|\div \Bf\|_{L^q(S^{n-1})} .
	\end{equation}  
	Let $\beta = 2\theta -2$. Multiplying \eqref{NSnorm2} by $(u^r_{+})^\beta$ and integrating on $S^{n-1}$, after performing integration by parts, we have 
	\begin{equation*}
		\begin{aligned}
			&  \frac{4\beta}{(\beta+1)^2} \int_{S^{n-1}} \left| \nabla^{S^{n-1}} (u_{+}^r)^{\frac{\beta +1}{2}}\right|^2 \, d\sigma + (n-2) \int_{S^{n-1}} \frac{(u^r_{+})^{\beta +2}}{\beta +1} d\sigma + \int_{S^{n-1}}2H_{-}(u_{+}^r)^\beta d\sigma \\
			=& \int_{S^{n-1}} (2 H_{+} + f^r ) (u^r_{+})^\beta \, d\sigma. 
		\end{aligned}
	\end{equation*}
	Hence,  by virtue of \eqref{eq:energyes-1}, 
	\begin{equation*}
		\begin{aligned}
			& \frac{n-2}{\beta+1} \|u^r_+\|_{L^{\beta+2}(S^{n-1})}^{\beta+2} 
			\leq  \int_{S^{n-1}} (2H_+ + f^r) (u^r_+)^{\beta} d\sigma  \\
			\leq &
			2\|H_+\|_{L^{\theta}(S^{n-1})}\|u^r_+\|_{L^{\theta'\beta}(S^{n-1})}^{\beta} + \|f^r\|_{L^{\infty}(S^{n-1})}\|u^r_+\|_{L^{\beta}(S^{n-1})}^{\beta} \\
			\leq & C\left(\|f^ru^r_+\|_{L^q(S^{n-1})}+\|\div \Bf\|_{L^q(S^{n-1})} \right
			)\|u^r_+\|_{L^{\beta +2 }(S^{n-1})}^{\beta} + \|f^r\|_{L^{\infty}(S^{n-1})}\|u^r_+\|_{L^{\beta}(S^{n-1})}^{\beta}\\
			\leq & C\|f^r\|_{L^{\infty}(S^{n-1})} \|u^r_+\|_{L^{q}(S^{n-1})}\|u^r_+\|_{L^{\beta+2}(S^{n-1})}^{\beta} +C \|\div \Bf\|_{L^q(S^{n-1})}
			\|u^r_+\|_{L^{\beta+2}(S^{n-1})}^{\beta}\\
			& + C\|f^r\|_{L^{\infty}(S^{n-1})}\|u^r_+\|_{L^{\beta+2}(S^{n-1})}^{\beta}\\
			\leq &  C\|f^r\|_{L^{\infty}(S^{n-1})} \|u^r_+\|_{L^{\beta+2}(S^{n-1})}^{\beta+1} + C(\|\div \Bf\|_{L^q(S^{n-1})}+\|f^r\|_{L^{\infty}(S^{n-1})})\|u^r_+\|_{L^{\beta+2}(S^{n-1})}^{\beta}, 
		\end{aligned}
	\end{equation*}
	where for the third inequality we used the fact $\theta' \beta = \beta +2$ and for the last inequality we used the fact $q<2\theta =\beta+2$. 
	Then by Young's inequality, it holds that  
	\begin{equation}\label{leq:energyes-2}
		\|u^r_+\|_{L^{2\theta}(S^{n-1})} 
		\leq C\left(\|f^r\|_{L^{\infty}(S^{n-1})}+ \|f^r\|_{L^{\infty}(S^{n-1})}^{\frac{1}{2}}  + \|\div \Bf\|_{L^q(S^{n-1})}^{\frac{1}{2}}\right).
	\end{equation}
	Taking \eqref{leq:energyes-2} into \eqref{eq:energyes-1}, we have
	\begin{equation}
		\begin{aligned}
			\|H_+\|_{L^{\theta}(S^{n-1})}  &  \leq C\|f^r\|_{L^{\infty}(S^{n-1})}\|u^r_+\|_{L^{\beta+2}(S^{n-1})} + C \|\div \Bf\|_{L^q(S^{n-1})} \\
			&\leq C \left( \|f^r\|_{L^{\infty}(S^{n-1})}^2 + \|f^r\|_{L^{\infty}(S^{n-1})}^{\frac{3}{2}}+  \|\div \Bf\|_{L^q(S^{n-1})} \right),
		\end{aligned}
	\end{equation}  
	which is exactly the desired estimate \eqref{estimateforH}. 
	It follows from \eqref{eq:ridalest} that one has
	\begin{equation}\label{estimateforur}
		\begin{aligned}
			&\quad (n-2)\|u^r\|_{L^2(S^{n-1})}^2 + 2\|H_-\|_{L^1(S^{n-1})} \\
			& \leq C 	\|H_+\|_{L^{\theta}(S^{n-1})} +  \|f^r\|_{L^1(S^{n-1})} \\
			&\leq C\left( \|f^r\|_{L^{\infty}(S^{n-1})}^2 + \|f^r\|_{L^{\infty}(S^{n-1})}+ 
			\|\div \Bf\|_{L^q(S^{n-1})} \right) .
		\end{aligned}
	\end{equation}
	Noting $\theta'=\frac{\theta}{\theta-1}<2$, it holds that 
	\begin{equation}\label{eq:estiofH+u}
		\begin{aligned}
			\int_{S^{n-1}}	\left|	H_+u^r\right|  d\sigma &\leq \|H_+\|_{L^{\theta}(S^{n-1})}\|u^r\|_{L^{\theta'}(S^{n-1})}\\
			& \le C
			\|H_+\|_{L^{\theta}(S^{n-1})}\|u^r\|_{L^{2}(S^{n-1})}
			\\ & \leq C \left( \|f^r\|_{L^{\infty}(S^{n-1})}^3 + \|f^r\|_{L^{\infty}(S^{n-1})}^2+ 
			\|\div \Bf\|_{L^q(S^{n-1})}^{\frac32} \right) .
		\end{aligned}
	\end{equation}
	We also have
	\begin{equation}\label{eq:estfu}
		\begin{aligned}
			\int_{S^{n-1}} \Bf\cdot \Bu d\sigma  =\int_{S^{n-1}} f^ru^r d\sigma &\leq C\|f^r\|_{L^{\infty}(S^{n-1})} \|u^r\|_{L^2(S^{n-1})}. 
		\end{aligned}
	\end{equation}
	Finally, putting  estimates \eqref{estimateforur}-\eqref{eq:estfu} into \eqref{eq:keydec-2} and a simple application of Young's inequality yield the following energy estimates
	\begin{equation}
		\begin{aligned}
			\int_{S^{n-1}} |\nabla \Bu|^2 +(n-4)|\Bu|^2 d\sigma
			& \leq 	\int_{S^{n-1}} \Bf\cdot \Bu + (n-4)H_{+} u^r_{-}\,  d\sigma + \frac{n-4}{2} \int_{S^{n-1}} f^r u^r_{+}\, d\sigma \\
			& \leq  C\left( \|f^r\|_{L^{\infty}(S^{n-1})}^3 + \|f^r\|_{L^{\infty}(S^{n-1})}^2+ 
			\|\div \Bf\|_{L^q(S^{n-1})}^{\frac32} \right) .
		\end{aligned}
	\end{equation}
	The estimates for the pressure $p$ follow the same proof of Proposition \ref{Energy estimates2}.
\end{proof}

Once Proposition
\ref{pro:ngeq13} is proved, the remaining part of the proof for Theorem \ref{thm:main2} is the same as that for Theorem \ref{thm:main}. We omit the details. 


	\appendix
	\section{Two technical issues}\label{app:singularinteg}
	In this appendix, we prove two technical issues used in the paper. First, we give a  useful lemma for estimates of singular integrals on homogeneous functions, which might be also useful for other related problems. 
	Second, we use the weak formulation of the equation of total head pressure and prove the validity of the energy estimate \eqref{eq:energyest775} when pressure belongs to $C^1(\mathbb{R}^n\setminus\{0\})$.

	\begin{lemma}\label{Riesz}
		Let $g$ be a $(-2)$-homogeneous (or $(-3)$-homogeneous) function and $g|_{S^{n-1}} \in L^\beta (S^{n-1})$, $1<\beta< \infty$. Assume that $T$ is the Riesz transform. Then $Tg$ is also $(-2)$-homogeneous (or $(-3)$-homogeneous respectively). Moreover, there exists a constant $C(n, \beta)$ depending on $n$ and $\beta$, such that 
		\begin{equation}\label{eq:SIOesti}
			\|Tg\|_{L^\beta(S^{n-1})} \leq C (n, \beta ) \|g\|_{L^\beta(S^{n-1})}. 
		\end{equation}
	\end{lemma}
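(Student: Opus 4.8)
The plan is to prove the statement for a single Riesz transform $R_j$; the cases actually used in the paper (second-order operators such as $\partial_i\partial_j\Delta^{-1}$) then follow by iterating the bound, and the argument below applies verbatim to any Calderón--Zygmund convolution operator whose kernel is $(-n)$-homogeneous and smooth off the origin. Write $m\in\{2,3\}$ for the homogeneity degree of $g$; in all applications $n\ge 4$, so $0<m<n$, and hence $g\in L^1_{\mathrm{loc}}(\mathbb R^n)$ (integrable near $0$ because $m<n$) and $g$ decays like $|x|^{-m}$ at infinity. First I would make sense of $R_jg$ pointwise: for $x\ne 0$ split $g=g\chi_{B(x,|x|/2)}+g\chi_{B(x,|x|/2)^{c}}$; the first summand lies in $L^\beta$ near $x$, so its truncated singular integral converges as the truncation shrinks for a.e.\ $x$ by the classical pointwise theory, while the second integrand is absolutely integrable against the kernel $c_n\frac{x_j-y_j}{|x-y|^{n+1}}$ — there is no singularity at $y=x$, integrability near $0$ uses $m<n$, and integrability at infinity uses $m+n>n$. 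This defines $R_jg$ as an a.e.-finite function on $\mathbb R^n\setminus\{0\}$, and substituting $y\mapsto\lambda y$ in the integral gives $R_jg(\lambda x)=\lambda^{-m}R_jg(x)$, i.e.\ $R_jg$ is $(-m)$-homogeneous. (Alternatively: the Fourier multiplier $-i\xi_j/|\xi|$ of $R_j$ is $0$-homogeneous, so $R_j$ commutes with dilations.)

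The quantitative estimate \eqref{eq:SIOesti} is the heart of the matter, and the obstacle to confront is that a homogeneous function lies in \emph{no} space $L^p(\mathbb R^n)$, so the classical $L^p$-boundedness of $R_j$ cannot be quoted directly, and moreover $S^{n-1}$ is a null set onto which one cannot restrict an $L^\beta(\mathbb R^n)$ function. To get around both, I would use the $(-m)$-homogeneity of $R_jg$ to pass to the solid annulus $A:=\{1<|x|<2\}$: in polar coordinates $\|R_jg\|_{L^\beta(S^{n-1})}^{\beta}=\big(\int_1^2 r^{\,n-1-m\beta}\,dr\big)^{-1}\|R_jg\|_{L^\beta(A)}^{\beta}$, so it suffices to bound $\|R_jg\|_{L^\beta(A)}$. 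Split $g=g_1+g_2$ with $g_1:=g\chi_{\{1/2<|y|<4\}}$. Then $g_1\in L^\beta(\mathbb R^n)$ with $\|g_1\|_{L^\beta(\mathbb R^n)}\le C(n,\beta)\|g\|_{L^\beta(S^{n-1})}$ (integrate over a bounded annulus in polar coordinates), so the classical Riesz-transform bound gives $\|R_jg_1\|_{L^\beta(A)}\le\|R_jg_1\|_{L^\beta(\mathbb R^n)}\le C(n,\beta)\|g\|_{L^\beta(S^{n-1})}$. For $g_2=g-g_1$, the support is disjoint from $\{1/2<|y|<4\}$, hence $|x-y|\ge 1/2$ whenever $x\in A$ and $y\in\supp g_2$; so on $A$, $R_jg_2(x)$ equals the absolutely convergent integral against the kernel and $|R_jg_2(x)|\le c_n\int_{\supp g_2}|x-y|^{-n}|g(y)|\,dy$. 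Decomposing $\supp g_2$ into dyadic shells $\{2^\ell\le|y|<2^{\ell+1}\}$, using $|x-y|\sim\max(1,2^\ell)$ there and $\int_{2^\ell\le|y|<2^{\ell+1}}|g|\le C\,2^{\ell(n-m)}\|g\|_{L^1(S^{n-1})}$, the contributions sum to $C\|g\|_{L^1(S^{n-1})}$, since the resulting series $\sum_{\ell\le -1}2^{\ell(n-m)}$ and $\sum_{\ell\ge 2}2^{-\ell m}$ converge exactly because $0<m<n$. Thus $\|R_jg_2\|_{L^\infty(A)}\le C(n)\|g\|_{L^1(S^{n-1})}\le C(n,\beta)\|g\|_{L^\beta(S^{n-1})}$, and adding the two pieces yields \eqref{eq:SIOesti}.

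What is left is routine bookkeeping: that $R_jg=R_jg_1+R_jg_2$ on $A$ (linearity and $g=g_1+g_2$), that the value of $R_jg_2$ read off from the kernel integral is the genuine one (because $g_2$ vanishes near $A$), and the a.e.-existence claim used to define $R_jg$; all of these are standard. So I expect the only real difficulty to be conceptual rather than computational — adapting the $L^p$ singular-integral theory to data that is homogeneous, hence not in any $L^p(\mathbb R^n)$ — resolved by the annulus localization above together with the elementary dyadic estimate of the long-range tail, which uses nothing beyond the $(-n)$-homogeneity of the kernel and the strict inequality $m<n$. (For $m\ge n$, e.g.\ $(-3)$-homogeneous data in $\mathbb R^3$, $g$ fails to be locally integrable and the statement in this form is false, consistent with the lemma being invoked only for $n\ge 4$.)
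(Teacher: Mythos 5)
Your proof is correct, but it takes a genuinely different route from the paper's. The paper converts the spherical norm into a weighted norm on all of $\mathbb{R}^n$ via the identity $\|g\|_{L^\beta_a(\mathbb{R}^n)}=\|g\|_{L^\beta(S^{n-1})}$ for the weight $a(x)=|x|^{2\beta-n+1}(1+|x|)^{-2}$, verifies by a case-by-case computation on balls that $a$ is a Muckenhoupt $A_\beta$ weight, and then quotes the weighted Calder\'on--Zygmund theorem; essentially the entire appendix proof is the $A_\beta$ verification. You instead exploit homogeneity to reduce the estimate to the solid annulus $A=\{1<|x|<2\}$, split $g$ into a near part $g_1$ supported in $\{1/2<|y|<4\}$ (handled by the classical unweighted $L^\beta$ bound for the Riesz transform) and a far part $g_2$ (handled by an absolutely convergent kernel integral and a dyadic-shell summation using only $0<m<n$). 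Your argument is more elementary and self-contained --- it avoids the weighted singular-integral theory entirely --- and it makes explicit the pointwise definition of $Tg$ for homogeneous data (which the paper leaves implicit in the weighted extension of $T$); the paper's approach is shorter once the weighted theorem is granted and packages the dependence on $\beta$ cleanly through the $A_\beta$ constant. Both arguments correctly identify the condition $m<n$ (local integrability of $g$ near the origin, respectively integrability of $r^{n-1-m\beta}$ type exponents) as the reason the lemma is restricted to the stated homogeneities in dimensions $n\geq 4$. One small bookkeeping point in your write-up: the decay of the $\ell\to-\infty$ tail should be indexed over shells contained in $\{|y|\le 1/2\}$, i.e.\ $\ell\le -2$ rather than $\ell\le -1$, but this does not affect convergence.
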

	\begin{proof}
		Since $g$ is a $(-2)$-homogeneous function, it's easy to check the homogeneity of $Tg$ by definition. 
		Let \begin{equation*}
			a(x) =a(|x|) = \frac{|x|^{2\beta -n +1}}{(1 + |x|)^2}. 
		\end{equation*}
		Define $ L_{a}^\beta (\mathbb{R}^n)$ to be
		\begin{equation*}
			L_{a}^\beta (\mathbb{R}^n):=\left\{g:\|g\|_{L_{a}^\beta (\mathbb{R}^n)} =\left( \int_{\mathbb{R}^n} |g(x)|^\beta a(x) \, dx  \right)^{\frac{1}{\beta}<+\infty}\right\}
		\end{equation*}
		We claim that $ g\in L_a^\beta(\mathbb{R}^n)$.
		\begin{equation}\label{Riesz-3}
			\begin{aligned}
				\|g\|_{L_a^\beta(\mathbb{R}^n)}^\beta   &= \int_{\mathbb{R}^n } |g(x)|^\beta a(x) \, dx  =
				\int_0^\infty \int_{S^{n-1}} |g(\sigma r)|^\beta a(r) \, r^{n-1}dr\, d\sigma  \\
				& = \int_0^\infty  \int_{S^{n-1}} r^{-2\beta}|g(\sigma)|^\beta a(r) r^{n-1} dr\, d\sigma  \\
				& = \int_0^\infty \frac{1}{(1+r)^2} dr \cdot \int_{S^{n-1}} |g(\sigma)|^\beta \, d\sigma \\
				& = \|g\|_{L^\beta(S^{n-1})}^\beta, 
			\end{aligned}
		\end{equation}
		which implies that $g\in L_{a}^\beta (\mathbb{R}^n)$ if   $g|_{S^{n-1}} \in L^\beta (S^{n-1})$. 
		Moreover, since $Tg$ is also $(-2)$-homogeneous, similarly we have that 
		\begin{equation}\label{eq:normequa}
			\begin{aligned}
				\|Tg\|_{L_a^\beta(\mathbb{R}^n)}^\beta =
				\|Tg\|_{L^\beta(S^{n-1})}^\beta. 
			\end{aligned} 
		\end{equation}
		
		We claim that $a(x)$ is an $A_\beta$ weight on $\mathbb{R}^n$.
		By then, using the Calderon-Zygmund theorem for $L_{a}^\beta (\mathbb{R}^n)$ (see \cite[Chapter V]{Stein93}), we have
		\begin{equation*}
			\|Tg\|_{L_a^\beta(\mathbb{R}^n)} \leq C (n, \beta) \|g\|_{L_a^\beta(\mathbb{R}^n )}.  
		\end{equation*}
		Noting \eqref{Riesz-3} and \eqref{eq:normequa}, 
		we finish the proof for Lemma \ref{Riesz} when $g$ is (-2)-homogeneous.
		
		Now we prove $a(x)$ is an $A_\beta$ weight on $\mathbb{R}^n$.
		It suffices to prove that  for every $ x_0\in \mathbb{R}^n, \ 0<R<\infty$, we have
		\begin{equation}\label{Riesz-4}
			\left( \frac{1}{|B_R(x_0)|} \int_{B_R(x_0)} a(x)\, dx  \right) \left( \frac{1}{|B_R(x_0)|} \int_{B_R(x_0)} a(x)^{-\frac{1}{\beta-1}} \, dx\right)^{\beta-1} \leq M,  
		\end{equation}
		for some universal constant $M$ depending only on $n$ and $\beta$. If $x_0=0$ and $0<R<2$, 
		\begin{equation}\label{Riesz-5}
			\begin{aligned}
				\frac{|B_1|}{|B_R|} \int_{B_R(0)} a(x)\, dx & =
				\frac{|B_1|}{|B_R|} \int_0^R \frac{r^{2\beta-n+1}}{(1+r)^2} r^{n-1} n \, dr   \\
				& = \frac{n}{R^n } \int_0^R \frac{r^{2\beta}}{(1+r)^2} \, dr \\
				& \leq \frac{n}{2\beta+1} R^{2\beta+1 -n}. 
			\end{aligned}
		\end{equation}
		On the other hand, 
		\begin{equation}\label{Riesz-6}
			\begin{aligned}
				\left(  \frac{1}{|B_R| } \int_{B_R(0)} a(x)^{-\frac{1}{\beta-1}} \, dx \right)^{\beta-1} & = \left(  \frac{|B_1|}{|B_R| } 
				\int_0^R  \left[ \frac{r^{2\beta-n+1}}{(1+r)^2} \right]^{-\frac{1}{\beta-1}} r^{n-1}  n \, dr  \right)^{\beta-1}\\
				& \leq C(\beta) \left( \frac{n}{R^n} \int_0^R r^{\frac{2\beta-n+1}{1-\beta}} r^{n-1}\, dr \right)^{\beta-1} \\
				& \leq C(\beta)\left(\frac{n(\beta-1)}{(n-2) \beta - 1 } \right)^{\beta-1} R^{n+1 -2\beta}.
			\end{aligned}
		\end{equation}
		Combining \eqref{Riesz-5} and \eqref{Riesz-6} yields \eqref{Riesz-4} for $x_0= 0$ and $0<R<2$. 
		
		If $x_0=0 $ and $R\geq 2$, we have
		\begin{equation}\label{Riesz-7}
			\begin{aligned}
				\frac{1}{|B_R|} \int_{B_R(0)} a(x)\, dx  = \frac{n}{R^n } \int_0^R \frac{r^{2\beta}}{(1+r)^2} \, dr  \leq \frac{n}{2\beta-1} R^{2\beta-1-n}. 
			\end{aligned} 
		\end{equation}
		On the other hand, it holds that
		\begin{equation}\label{Riesz-8}
			\begin{aligned}
				\left(  \frac{1}{|B_R| } \int_{B_R(0)} a(x)^{-\frac{1}{\beta-1}} \, dx \right)^{\beta-1} & = \left(\frac{n}{R^n} \int_0^R   \left[ \frac{r^{2\beta-n+1}}{(1+r)^2} \right]^{-\frac{1}{\beta-1}} r^{n-1} \, dr  \right)^{\beta-1} \\
				& \leq \left(\frac{n}{R^n} \int_0^R r^{\frac{(n-3)\beta}{\beta-1}} \cdot (1+r)^{\frac{2}{\beta-1}} \, dr  \right)^{\beta-1} \\
				& = \left[\frac{n}{R^n}\left(\int_0^1 + \int_1^R\right) r^{\frac{(n-3)\beta}{\beta-1}} \cdot (1+r)^{\frac{2}{\beta-1}}  \, dr \right]^{\beta-1} \\
				& \leq \left[\frac{2n}{R^n} \int_0^R  r^{\frac{(n-3)\beta}{\beta-1}} \cdot (2r)^{\frac{2}{\beta-1}}  \, dr \right]^{\beta-1}\\
				& \leq  4\left( \frac{2n (\beta-1)}{ (n-2)\beta +1  } \right)^{\beta-1} R^{n-2\beta +1}. 
			\end{aligned}
		\end{equation}
		Combining \eqref{Riesz-7} and \eqref{Riesz-8}, we derive \eqref{Riesz-4} for $x_0= 0$ and $R\geq 2$. 
		
		If $x_0\neq 0$ and $R> \frac12 |x_0|$, 
		$B_R(x_0) \subset B_{3R}(0)$ and then
		\begin{equation*}\label{Riesz-9}
			\begin{aligned}      
				& \left( \frac{1}{|B_R(x_0)|} \int_{B_R(x_0)} a(x)\, dx  \right) \left( \frac{1}{|B_R(x_0)|} \int_{B_R(x_0)} a(x)^{-\frac{1}{\beta-1}} \, dx\right)^{\beta-1}   \\
				\leq   & \left( \frac{1}{|B_R|} \int_{B_{3R}(0)} a(x)\, dx  \right) \left( \frac{1}{|B_R|} \int_{B_{3R}(0)} a(x)^{-\frac{1}{\beta-1}} \, dx\right)^{\beta-1}\\
				=   & \left( 3^n\frac{1}{|B_{3R}|} \int_{B_{3R}(0)} a(x)\, dx  \right) \left( 3^n \frac{1}{|B_{3R}|} \int_{B_{3R}(0)} a(x)^{-\frac{1}{\beta-1}} \, dx\right)^{\beta-1}. 
			\end{aligned}
		\end{equation*}
		Combining the above estimates for the case $x_0=0$, we prove \eqref{Riesz-4} for $x_0 \neq 0$ and $R>\frac12 |x_0|$. 
		
		At last, we consider the case $x_0\neq 0$ and $0< R\leq  \frac12 |x_0|$. If $|x_0|\geq 2$, $2\beta-n+1 \geq 0$, for any $x \in B_R(x_0)$, 
		\begin{equation*}\label{Riesz-11}
			\left(\frac12 |x_0|\right)^{2\beta-n+1} \cdot (2|x_0|)^{-2} \leq a(x) \leq \left( \frac32 |x_0| \right)^{2\beta-n+1} |x_0|^{-2}. 
		\end{equation*}
		If $|x_0|\geq 2$ and $2\beta-n+1 < 0$, for any $x \in B_R(x_0)$, 
		\begin{equation*}\label{Riesz-12}
			\left(\frac32 |x_0|\right)^{2\beta-n+1} \cdot (2|x_0|)^{-2} \leq a(x) \leq \left( \frac12 |x_0| \right)^{2\beta-n+1} |x_0|^{-2}.
		\end{equation*}
		If $0< |x_0| < 2$, $2\beta-n+1 \geq 0$,  for any $x \in B_R(x_0)$, 
		\begin{equation*}\label{Riesz-15}
			\left(\frac12 |x_0|\right)^{2\beta-n+1} \cdot 4^{-2} \leq a(x) \leq \left( \frac32 |x_0| \right)^{2\beta-n+1}.
		\end{equation*}
		If $0< |x_0| < 2$, $2\beta-n+1 < 0$,  for any $x \in B_R(x_0)$, 
		\begin{equation*}\label{Riesz-12new}
			\left(\frac32 |x_0|\right)^{2\beta-n+1} \cdot 4^{-2} \leq a(x) \leq \left( \frac12 |x_0| \right)^{2\beta-n+1}.
		\end{equation*}
		Then it is straightforward to verify that \eqref{Riesz-4} holds when $x_0\neq 0$ and $0< R \leq \frac12 |x_0|$. 
		
		The argument for the proof of \eqref{eq:SIOesti} for $(-3)$-homogeneous function is almost the same, so we omit the details. Hence the proof of the lemma is completed.
	\end{proof}

	Let us now prove the validity of \eqref{eq:energyest775} for the rest of this appendix.
	\begin{proof}[Proof for the validity of \eqref{eq:energyest775}]
		Note that $H\in C^1(\mathbb{R}^n \setminus \{0\})$ satisfies a weak formulation of \eqref{eq:headpre2}:
		\begin{align}
			\label{weak_for}
			\int_{\mathbb{R}^n} \nabla\, H \cdot \nabla \phi\, dx
			=
			\int_{\mathbb{R}^n}
			\left(
			-\Bu \cdot \nabla H - \frac12 |\partial_i u_j - \partial_j u_i|^2 + \Bf \cdot \Bu - \div \Bf 
			\right) \, \phi \, dx
		\end{align}
		for all $\phi\in H^1_0(\mathbb{R}^n\setminus \{0\})$.
		Let $\xi\in C_c^\infty (0,\infty)$.
		Substituting $\phi(x)= H_+^{\alpha}(x) \, \xi(r) \in H^1_0(\mathbb{R}^n\setminus \{0\}), \alpha=\frac{n-4}{2}$ in \eqref{weak_for} yields
		\begin{align}
			\label{eq1808}
			\int_{\mathbb{R}^n} 
			\nabla H_+ \cdot \nabla (H^\alpha_+ )\, \xi \, dx
			=
			\int_{\mathbb{R}^n} 
			\left(-\Bu \cdot \nabla H -\frac{1}{2} |\partial_i u_j - \partial_j u_i |^2 +\Bf\cdot \Bu -\div \Bf 
			\right)
			\, H_+^\alpha \, \xi \, dx.
		\end{align}
		Here we have used that for $h(x):=H^\alpha_+\, \partial_r \, H_+  $, which is $(1-n)$-homogeneous, one can get
		\begin{align*}
			\int_{\mathbb{R}^n}
			\nabla H_+ \cdot H_+^\alpha \, \nabla \xi \, dx
			&=
			\int_{\mathbb{R}^n} 
			h(x) \, \partial_r  \xi(r) \, dx
			\\
			&=
			\int_0^\infty \int _{S^{n-1}} h(x) \, r^{n-1} \, \partial_r \xi (r) \, d\sigma \, dr
			\\
			&=
			\int_0^\infty \int _{S^{n-1}} h\left( \frac{x}{|x|} \right) \, \partial_r \xi (r) \, d\sigma \, dr
			\\
			&=\int_0^\infty \partial_r \xi (r) \, dr
			\int _{S^{n-1}} h\left( \frac{x}{|x|} \right) \, d\sigma=0.
		\end{align*}
		
		Lastly, one can choose a sequence $\{\xi_k(r)\}_{k=1}^\infty$ of test functions in the interval $(0,\infty)$ such that $\xi_k(r) \to \chi_{B_R \setminus B_1}(x)$ in $L^1$ as $k\to\infty$. Then using $\xi_k$ in \eqref{eq1808} and sending $k\to\infty$  proves \eqref{eq:energyest775}.
	\end{proof}

	{\bf Acknowledgement.}
	The research of Gui is supported by University of Macau research grants   CPG2023-00037-FST, CPG2024-00016-FST, SRG2023-00011-FST,  MYRG-GRG2023-00139-FST-UMDF,  UMDF Professorial Fellowship of Mathematics, and Macao SAR FDCT 0003/2023/RIA1 and 0024/2023/RIB1.  The research of Wang is partially supported by NSFC grants 12171349 and the Natural Science Foundation of Jiangsu Province (Grant No. BK20240147). The research of  Xie is partially supported by  NSFC grants 12250710674, 12571238, and 12426203,  and Program of Shanghai Academic Research Leader 22XD1421400. 


\begin{thebibliography}{10}
	
	\bibitem{Amick}
	C.~J. Amick and L.~E. Fraenkel.
	\newblock Steady solutions of the {N}avier-{S}tokes equations representing
	plane flow in channels of various types.
	\newblock {\em Acta Math.}, 144(1-2):83--151, 1980.
	
	\bibitem{Aubin76}
	T.~Aubin.
	\newblock Probl\`emes isop\'erim\'etriques et espaces de {S}obolev.
	\newblock {\em J. Differential Geometry}, 11(4):573--598, 1976.
	
	\bibitem{bang2023rigidity}
	J.~Bang, C.~Gui, H.~Liu, Y.~Wang, and C.~Xie.
	\newblock Rigidity of steady solutions to the {N}avier-{S}tokes equations in
	high dimensions and its applications.
	\newblock {\em To appear in J. Eur. Math. Soc., arXiv:2306.05184}, 2023.
	
	\bibitem{EvansG}
	L.~C. Evans and R.~F. Gariepy.
	\newblock {\em Measure theory and fine properties of functions}.
	\newblock Textbooks in Mathematics. CRC Press, Boca Raton, FL, revised edition,
	2015.
	
	\bibitem{FrehseRuzicka94}
	J.~Frehse and M.~R{\r u}\v{z}i\v{c}ka.
	\newblock Regularity for the stationary {N}avier-{S}tokes equations in bounded
	domains.
	\newblock {\em Arch. Ration. Mech. Anal.}, 128(4):361--380, 1994.
	
	\bibitem{FrehseRuzicka96}
	J.~Frehse and M.~R{\r u}\v{z}i\v{c}ka.
	\newblock Existence of regular solutions to the steady {N}avier-{S}tokes
	equations in bounded six-dimensional domains.
	\newblock {\em Ann. Scuola Norm. Sup. Pisa Cl. Sci. (4)}, 23(4):701--719, 1996.
	
	\bibitem{Frehse94}
	J.~Frehse and M.~R{\r u}{\v{z}}i{\v{c}}ka.
	\newblock On the regularity of the stationary {N}avier-{S}tokes equations.
	\newblock {\em Ann. Scuola Norm. Sup. Pisa Cl. Sci. (4)}, 21(1):63--95, 1994.
	
	\bibitem{Galdi11}
	G.~P. Galdi.
	\newblock {\em An introduction to the mathematical theory of the
		{N}avier-{S}tokes equations}.
	\newblock Springer Monographs in Mathematics. Springer, New York, second
	edition, 2011.
	
	\bibitem{Gerhardt79}
	C.~Gerhardt.
	\newblock Stationary solutions to the {N}avier-{S}tokes equations in dimension
	four.
	\newblock {\em Math. Z.}, 165(2):193--197, 1979.
	
	\bibitem{GilbargTrudinger}
	D.~Gilbarg and N.~S. Trudinger.
	\newblock {\em Elliptic partial differential equations of second order}, volume
	Vol. 224 of {\em Grundlehren der Mathematischen Wissenschaften}.
	\newblock Springer-Verlag, Berlin-New York, 1977.
	
	\bibitem{GilbargWeinberger78}
	D.~Gilbarg and H.~F. Weinberger.
	\newblock Asymptotic properties of steady plane solutions of the
	{N}avier-{S}tokes equations with bounded {D}irichlet integral.
	\newblock {\em Ann. Scuola Norm. Sup. Pisa Cl. Sci. (4)}, 5(2):381--404, 1978.
	
	\bibitem{jia1409}
	H.~Jia.
	\newblock Uniqueness of solutions to {N}avier {S}tokes equation with small
	initial data in {$L^{3,\infty}$}.
	\newblock {\em arXiv preprint}, arXiv:1409.8382, 2014.
	
	\bibitem{Jia14}
	H.~Jia and V.~\v{S}ver\'{a}k.
	\newblock Local-in-space estimates near initial time for weak solutions of the
	{N}avier-{S}tokes equations and forward self-similar solutions.
	\newblock {\em Invent. Math.}, 196(1):233--265, 2014.
	
	\bibitem{Kaneko19}
	K.~Kaneko, H.~Kozono, and S.~Shimizu.
	\newblock Stationary solution to the {N}avier-{S}tokes equations in the scaling
	invariant {B}esov space and its regularity.
	\newblock {\em Indiana Univ. Math. J.}, 68(3):857--880, 2019.
	
	\bibitem{Kozono95}
	H.~Kozono and M.~Yamazaki.
	\newblock The stability of small stationary solutions in {M}orrey spaces of the
	{N}avier-{S}tokes equation.
	\newblock {\em Indiana Univ. Math. J.}, 44(4):1307--1336, 1995.
	
	\bibitem{leray1933etude}
	J.~Leray.
	\newblock {\'E}tude de diverses {\'e}quations int{\'e}grales non lin{\'e}aires
	et de quelques probl{\`e}mes que pose l'hydrodynamique.
	\newblock {\em J. Math. Pures Appl.}, 12:1--82, 1933.
	
	\bibitem{LiYang22}
	Y.~Li and Z.~Yang.
	\newblock Regular solutions of the stationary {N}avier-{S}tokes equations on
	high dimensional {E}uclidean space.
	\newblock {\em Comm. Math. Phys.}, 394(2):711--734, 2022.
	
	\bibitem{Phan13}
	T.~V. Phan and N.~C. Phuc.
	\newblock Stationary {N}avier-{S}tokes equations with critically singular
	external forces: existence and stability results.
	\newblock {\em Adv. Math.}, 241:137--161, 2013.
	
	\bibitem{Serrin}
	J.~Serrin.
	\newblock {\em Mathematical principles of classical fluid mechanics}, volume
	Bd. 8/1, Str\"omungsmecha of {\em Handbuch der Physik [Encyclopedia of
		Physics]}.
	\newblock Springer-Verlag, Berlin-G\"ottingen-Heidelberg, 1959.
	
	\bibitem{Shi18}
	Z.~Shi.
	\newblock Self-similar solutions of stationary {N}avier-{S}tokes equations.
	\newblock {\em J. Differential Equations}, 264(3):1550--1580, 2018.
	
	\bibitem{Stein93}
	E.~M. Stein.
	\newblock {\em Harmonic analysis: real-variable methods, orthogonality, and
		oscillatory integrals}, volume~43 of {\em Princeton Mathematical Series}.
	\newblock Princeton University Press, Princeton, NJ, 1993.
	
	\bibitem{Struwe95}
	M.~Struwe.
	\newblock Regular solutions of the stationary {N}avier-{S}tokes equations on
	{$\bold R^5$}.
	\newblock {\em Math. Ann.}, 302(4):719--741, 1995.
	
	\bibitem{Sverak11}
	V.~{\v{S}}ver\'{a}k.
	\newblock On {L}andau's solutions of the {N}avier-{S}tokes equations.
	\newblock {\em J. Math. Sci. (N.Y.)}, 179(1):208--228, 2011.
	
	\bibitem{Tsai98}
	T.-P. Tsai.
	\newblock On {L}eray's self-similar solutions of the {N}avier-{S}tokes
	equations satisfying local energy estimates.
	\newblock {\em Arch. Ration. Mech. Anal.}, 143(1):29--51, 1998.
	
	\bibitem{Tsai18}
	T.-P. Tsai.
	\newblock {\em Lectures on {N}avier-{S}tokes equations}, volume 192 of {\em
		Graduate Studies in Mathematics}.
	\newblock American Mathematical Society, Providence, RI, 2018.
	
	\bibitem{Tsurumi19}
	H.~Tsurumi.
	\newblock Well-posedness and ill-posedness problems of the stationary
	{N}avier-{S}tokes equations in scaling invariant {B}esov spaces.
	\newblock {\em Arch. Ration. Mech. Anal.}, 234(2):911--923, 2019.
	
\end{thebibliography}
\end{document}